\Crefname{ALC@unique}{Line}{Lines}
\newcommand{\bd}[1]{\mathbf{#1}}
\def \i {{\mathrm{i}}}
\begin{document}

\title{Exponential convergence for multipole and local expansions and their
translations for sources in layered media: 2-D acoustic wave\thanks{This work
was supported by US Army Research Office (Grant No.W911NF-17-1-0368) and US
National Science Foundation (Grant No. DMS-1802143).}}
\author{Wenzhong Zhang\thanks{Department of Mathematics, Southern Methodist
University, Dallas, TX 75275.}
\and Bo Wang\thanks{LCSM, Ministry of Education, School of Mathematics and
Statistics, Hunan Normal University, Changsha, Hunan 410081, P. R. China.
Department of Mathematics, Southern Methodist University, Dallas, TX 75275.
The author acknowledges the financial support provided by NSFC (grant
11771137), the Construct Program of the Key Discipline in Hunan Province and a
Scientific Research Fund of Hunan Provincial Education Department (No.
16B154).}
\and Wei Cai\thanks{Corresponding author, Department of Mathematics, Southern
Methodist University, Dallas, TX 75275 (\texttt{cai@smu.edu}).
\textit{Submitted to SIAM J. Numerical Analysis, June, 2019 and a first version of this paper
appeared as arXiv:1809.07716 on September 20, 2018.}}}
\maketitle

\begin{abstract}
In this paper, we will first give a derivation of the multipole expansion (ME)
and local expansion (LE) for the far field from sources in general 2-D layered
media and the multipole-to-local translation (M2L) operator by using the
generating function for Bessel functions. Then, we present a rigorous proof of
the exponential convergence of the ME, LE, and M2L for 2-D Helmholtz equations
in layered media. It is shown that the convergence of ME, LE, and M2L for the
reaction field component of the Green's function depends on a polarized distance
between the target and a polarized image of the source.

\end{abstract}


\begin{keywords}
fast multipole method, layered media, multipole expansions, local expansions, Helmholtz equation, Cagniard--de Hoop transform, equivalent polarization sources
\end{keywords}

\begin{AMS}
15A15, 15A09, 15A23
\end{AMS}

\section{Introduction}

The multipole expansion (ME), local expansion (LE), and multipole-to-local
translation (M2L) form the mathematical foundation of fast multipole methods
(FMMs) for evaluating integral operators defined by the Green's function of
Helmholtz equations in wave scattering \cite{rokhlin1990rapid}. The ME for the
Green's functions in the free space was based on the Graf's addition theorems
for Bessel functions. To extend the FMM for wave scattering in layered media,
the ME and M2L formulas for Helmholtz equations in a 2-D half-space domain were
proposed
in \cite{cho-arxiv}.
The derivation in \cite{cho-arxiv} for the ME and M2L for the Green's function
in a half-space domain with impedance boundary condition made use of the image
charge (point and line images) representation of the Green's function of the
domain and the MEs, based on the Graf's addition theorem, for the image charges
as well as the original source charges. And, it was shown that the ME
coefficients used to compress the far field of the source charges in the free
space can also be used to compress the far field of the images, therefore,
producing a ME for the Green's functions of the 2-D half-space domain. It was
predicted in \cite{cho-arxiv} that similar results could hold for general
layered media by using a Sommerfeld representation in frequency domain of the
Green's function \cite{chew} where the layered Green's function is expressed
as plane waves in the frequency domain. Furthermore, in the case of the half
space with an impedance boundary condition, the image representation of the
domain Green's function justifies the same truncation order, thus the
exponential convergence, of ME and M2L and a heterogeneous FMM for sources in
the half space domain was proposed and implemented in \cite{cho2018}, giving
an $O(N)$ complexity of evaluating the integral operator of low frequency
Helmholtz operators.

As an image representation of general layered media Green's function may not
exist, in this paper, we will present an alternative complete derivation for
the ME, LE, and M2L operators for the Green's function in general 2-D layered
media by using the generating function for the Bessel functions of the first
kind (referred as the Bessel generating function in this paper). Also, we will
give a rigorous proof of the exponential convergence of the ME, LE, and M2L
translation operators for acoustic wave sources in general 2-D layered media.
The convergence analysis reveals a fact that the convergence of ME, LE,
and M2L for the reaction field component of the Green's function in fact depends on
a polarized distance, which is defined between the target and a polarized
image of the source, thus suggesting how the FMM framework should be set for
sources and targets in layered media.

The rest of the paper is organized as follows. In \Cref{sect-2}, we first give
some technical tools crucial to the work in this paper, including the Bessel
generating function, which relates plane waves to cylindrical waves and the
growth condition of the Bessel functions. Then, the Bessel generating function
is used to derive the analytical formula for the ME expansions for sources in
2-D layered media, the M2L translation operator, and the multipole and local
translation operators. The exponential convergence rates for these expansions
are then given. In \Cref{sect-3}, we will first give the proof of the
exponential convergence of some integral expansions resulting from using the Bessel
generating function. The proof is given starting with a special case
corresponding to the situation when the far-field location is directly above
or below the center of the expansion. Then, the Cagniard--de Hoop transform \cite{chew} is
introduced so that we can deal with the general case by using complex domain
contour integrals. The proof for the error estimate of ME, M2L, etc.
introduced in \Cref{sect-2} will follow. A conclusion is given in
\Cref{sect-4} while Appendices are included for some technical lemmas and
proofs of several lemmas from the main text.

\section{Far-field expansions for the 2-D Helmholtz equation in layered media}

\label{sect-2} In this section, we begin with some properties of the Bessel
functions of the first kind, which inspires an alternative derivation of the
ME of the free space Green's function. These properties will be key to derive
various far-field expansions in layered media. The ME, LE, M2L, and the local
to local translation (L2L) for the layered media will then be derived with error
estimates. Finally, a feasible FMM framework for sources in layered media is
proposed based on the convergence results of the far-field expansions.

\subsection{An identity and some estimates on Bessel functions of the first
kind}

Recall the Bessel generating function \cite[(9.1.41)]{abramowitz1966handbook},
for any $z,\omega\in\mathbb{C}$ with $\omega\neq0$,
\begin{equation}
g(z,\omega)=\exp\left(  \frac{z}{2}(\omega-\omega^{-1})\right)  =\sum
_{p=-\infty}^{\infty}J_{p}(z)\omega^{p}.\label{eq-genBessel}%
\end{equation}
The identity \cref{eq-genBessel} expresses a plane wave function in terms of
cylindrical functions,  in contrast to the Sommerfeld integral representation
of the Green's function (cylindrical function) in terms of plane waves
(\ref{sommerfeld}). This duality facilitates the derivation of the far-field
expansions in this paper.

The above series converges absolutely, which is a corollary of the following
lemma.
\begin{lemma}[an estimate on Bessel functions of the first kind]\label{lemma-Jn-bound}Let $p\in\mathbb{Z}$, $z\in\mathbb{C}$, $p$ and $z$ are not both zero.
Then
\begin{equation*}
\left|J_{p}(z)\right| \le \frac{1}{|p|!} \left(\frac{|z|}{2} \right)^{|p|} e^{|\Im z|}.
\end{equation*}
\end{lemma}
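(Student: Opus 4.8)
The plan is to start from the standard power series / integral representation of $J_p(z)$ and bound it term-by-term (or via the integral), being careful about the sign of $p$ and about the borderline exponential factor $e^{|\Im z|}$. Recall for $p \ge 0$ the series
\[
J_p(z) = \sum_{k=0}^{\infty} \frac{(-1)^k}{k!\,(k+p)!}\left(\frac{z}{2}\right)^{2k+p},
\]
and $J_{-p}(z) = (-1)^p J_p(z)$, so it suffices to treat $p = |p| \ge 0$ (with $p$ and $z$ not both zero so the leading term makes sense). Factoring out $(z/2)^p$, I would write
\[
|J_p(z)| \le \frac{1}{p!}\left(\frac{|z|}{2}\right)^{p} \sum_{k=0}^{\infty} \frac{p!}{k!\,(k+p)!}\left(\frac{|z|}{2}\right)^{2k},
\]
and the goal is to show the remaining sum is $\le e^{|\Im z|}$. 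The obstacle here is that the naive bound $p!/(k+p)! \le 1/k!$ only gives $\sum_k \frac{1}{(k!)^2}(|z|/2)^{2k}$, which is roughly $I_0(|z|)$ — too large, not $e^{|\Im z|}$. So the term-by-term approach on the power series as written does not directly work; one needs the finer structure of the integral representation.

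Instead I would use Poisson's integral representation, valid for $p \ge 0$,
\[
J_p(z) = \frac{(z/2)^p}{\sqrt{\pi}\,\Gamma(p + \tfrac12)} \int_{-1}^{1} (1-t^2)^{p - 1/2} e^{\i z t}\, dt,
\]
(and for $p=0$ this is the classical $\frac{1}{\pi}\int_0^\pi e^{\i z\cos\theta}d\theta$). Then
\[
|J_p(z)| \le \frac{(|z|/2)^p}{\sqrt{\pi}\,\Gamma(p+\tfrac12)} \int_{-1}^1 (1-t^2)^{p-1/2} e^{|\Im z|\,|t|}\, dt \le \frac{(|z|/2)^p}{\sqrt{\pi}\,\Gamma(p+\tfrac12)}\, e^{|\Im z|} \int_{-1}^1 (1-t^2)^{p-1/2}\, dt,
\]
using $|e^{\i z t}| = e^{-\Im z \, t} \le e^{|\Im z||t|} \le e^{|\Im z|}$ on $[-1,1]$. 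The remaining beta-type integral evaluates exactly: $\int_{-1}^1 (1-t^2)^{p-1/2}dt = B(\tfrac12, p+\tfrac12) = \sqrt{\pi}\,\Gamma(p+\tfrac12)/\Gamma(p+1) = \sqrt{\pi}\,\Gamma(p+\tfrac12)/p!$. Substituting cancels the $\Gamma(p+\tfrac12)$ and $\sqrt{\pi}$ factors and leaves precisely $\frac{1}{p!}(|z|/2)^p e^{|\Im z|}$, which is the claim for $p \ge 0$; the case $p < 0$ follows from $|J_{-p}| = |J_p|$. The only care needed is that Poisson's representation requires $p + \tfrac12 > 0$, which holds for all integers $p \ge 0$, and the $p=0$ case can be handled either as the limit or by the classical cosine integral directly.

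I expect the main (minor) obstacle is simply recognizing that the crude power-series bound fails and that one must invoke the Poisson integral to get the sharp $e^{|\Im z|}$ factor rather than a Bessel-$I$-type bound; once that representation is in hand the computation is a one-line beta-function identity. An alternative route, if one prefers to stay with series, is to bound $|J_p(z)|$ by its value on the imaginary axis direction more carefully — but the integral representation is cleanest, so that is the approach I would carry out. As a final remark, this lemma immediately gives absolute convergence of the generating-function series \cref{eq-genBessel}: for fixed $z$ and $|\omega|$ bounded away from $0$ and $\infty$, $\sum_p |J_p(z)||\omega|^p$ is dominated by $e^{|\Im z|}\sum_p \frac{1}{|p|!}(|z|/2)^{|p|}|\omega|^{p}$, a convergent series.
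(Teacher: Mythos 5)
Your proof is correct. The paper itself disposes of the lemma in two lines: for $p\ge 0$ it simply cites the inequality \cite[(9.1.62)]{abramowitz1966handbook}, which is verbatim the claimed bound $|J_p(z)|\le \frac{1}{p!}(|z|/2)^p e^{|\Im z|}$ for orders $\ge -\tfrac12$, and then extends to $p<0$ via $J_{-p}(z)=(-1)^pJ_p(z)$ exactly as you do. What you have done differently is to actually prove the cited inequality rather than quote it, using Poisson's integral representation together with the beta-function identity $\int_{-1}^1(1-t^2)^{p-1/2}\,dt=\sqrt{\pi}\,\Gamma(p+\tfrac12)/p!$; your computation is accurate (the bound $|e^{\i zt}|\le e^{|\Im z|}$ on $[-1,1]$, the exact cancellation of the $\Gamma(p+\tfrac12)$ and $\sqrt{\pi}$ factors, and the validity of the representation for all integers $p\ge 0$ are all in order), and your observation that the naive term-by-term power-series bound only yields an $I_p(|z|)$-type estimate --- far too large for real $z$ --- correctly identifies why the integral representation is needed. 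The trade-off is the obvious one: the paper's citation is shorter and covers non-integer orders $\ge -\tfrac12$ for free, while your argument is self-contained and makes the mechanism behind the sharp $e^{|\Im z|}$ factor visible; your closing remark that the lemma yields absolute convergence of the generating-function series \cref{eq-genBessel} matches the role the paper assigns to it.
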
\begin{proof}
When $p \ge 0 > -\frac{1}{2}$, the inequality is exactly given by \cite[(9.1.62)]{abramowitz1966handbook}.
Then, the identity $J_{p}(z) = (-1)^{p}J_{-p}(z)$ covers the case $p<0$.
\end{proof}In particular, for $z\in\mathbb{R}$ and $z\geq0$, the inequality
\begin{equation}
\left\vert J_{p}(z)\right\vert \leq\frac{1}{|p|!}\left(  \frac{z}{2}\right)
^{|p|}\label{eq-BesselBound}%
\end{equation}
(with the convention $0^{0}=1$) will be used to derive the exponential
convergence estimates for far-field expansions in this paper.

\subsection{The multipole expansion in free space revisited}

\label{section-free-space}Consider $N$ sources with strength $q_{j}$ placed at
locations $\mathbf{x}_{j}=(x_{j},y_{j})$, $j=1,2,\cdots,N$ within a circle
centered at $\mathbf{x}_{c}=(x_{c},y_{c})$ with a radius $r$ in the free space
$\mathbb{R}^{2}$, then, the field located at $\mathbf{x}$ due to all sources
is given by
\[
u^{\text{f}}(\mathbf{x})=\sum_{j=1}^{N}q_{j}G^{\text{f}}(\mathbf{x}%
,\mathbf{x}_{j}),
\]
where $G^{\text{f}}$ is the free space Green's function
\[
G^{\text{f}}(\mathbf{x},\mathbf{x}^{\prime})=\frac{\i }{4}H_{0}%
^{(1)}\left(  k|\mathbf{x}-\mathbf{x}^{\prime}|\right)  ,
\]
$k$ is the wave number, and $H_{0}^{(1)}$ is the Hankel function of the first
kind. A target $\mathbf{x}$ is well-separated from the sources if the
distance between $\mathbf{x}$ and the source center $\mathbf{x}_{c}$
is at least $2r$.

By using Graf's addition theorem \cite{abramowitz1966handbook}, the free space
Green's function for the well-separated sources $\mathbf{x}_{j}$ and the
target $\mathbf{x}$ can be compressed as a multipole expansion given by
\begin{equation}
u^{\text{f}}(\mathbf{x})=\frac{\i }{4}\sum_{p=-\infty}^{\infty}\alpha
_{p}H_{p}^{(1)}\left(  {k}|\mathbf{x}-\mathbf{x}_{c}|\right)  e^{\i
p\theta_{c}}\approx\frac{\i }{4}\sum_{|p|<P}\alpha_{p}H_{p}^{(1)}({k}%
\rho_{c})e^{\i  p\theta_{c}},\label{eq-free}%
\end{equation}
where $\alpha_{p}=\sum_{j=1}^{N}q_{j}J_{p}({k}\rho_{j})e^{-\i  p\theta_{j}%
}$, $(\rho_{c},\theta_{c})$ are the polar coordinates of $\mathbf{x}%
-\mathbf{x}_{c}$, $(\rho_{j},\theta_{j})$ are the polar coordinates of
$\mathbf{x}_{j}-\mathbf{x}_{c}$, and the truncation index $P$ is a constant
independent of the number of the sources $N$ \cite{rokhlin1990rapid}.

The multipole expansion can also be derived in the frequency domain using \cref{eq-genBessel} as
follows. Consider one source $\mathbf{x}_{j}$ and suppose $y-y_{j}>0$,
$y-y_{c}>0$ for simplicity. The interaction between $\mathbf{x}$ and
$\mathbf{x}_{j}$ can be represented by a Sommerfeld integral of plane waves \cite{cho2018},
\begin{equation}
G^{\text{f}}(\mathbf{x},\mathbf{x}_{j})=\frac{\i }{4}H_{0}^{(1)}\left(
k|\mathbf{x}-\mathbf{x}_{j}|\right)  =\frac{\i }{4}\frac{1}{\i \pi}%
\int_{-\infty}^{\infty}\frac{e^{-\sqrt{\lambda^{2}-k^{2}}(y-y_{j})}}%
{\sqrt{\lambda^{2}-k^{2}}}e^{\i \lambda(x-x_{j})}d\lambda
,\label{sommerfeld}%
\end{equation}
while each term $H_{p}^{(1)}(k\rho_{c})e^{\i  p\theta_{c}}$ in
\cref{eq-free} has a similar representation \cite{cho2018}
\begin{equation}
H_{p}^{(1)}(k\rho_{c})e^{\i  p\theta_{c}}=\frac{1}{\i \pi}\int_{-\infty
}^{\infty}\frac{e^{-\sqrt{\lambda^{2}-k^{2}}(y-y_{c})}}{\sqrt{\lambda
^{2}-k^{2}}}e^{\i \lambda(x-x_{c})}(-\i )^{p}\left(  \frac{\lambda
-\sqrt{\lambda^{2}-k^{2}}}{k}\right)  ^{p}d\lambda,\label{eq-PWE-free}%
\end{equation}
here the square root in $\sqrt{\lambda^{2}-k^{2}}$ for $|\lambda|<k$ is
defined as $\sqrt{\lambda^{2}-k^{2}}=-\i \sqrt{k^{2}-\lambda^{2}}$. These
integral forms give an alternative derivation for the multipole expansion of
$\frac{\i }{4}H_{0}^{(1)}(k|\mathbf{x}-\mathbf{x}_{j}|)=\frac{\i }%
{4}H_{0}^{(1)}(k|\left(  \mathbf{x}-\mathbf{x}_{c}\right)  +\left(
\mathbf{x}_{c}-\mathbf{x}_{j}\right)  |)$ with separable product terms
involving $\left(  \mathbf{x}-\mathbf{x}_{c}\right)  $ and $\left(
\mathbf{x}_{c}-\mathbf{x}_{j}\right)  $,
\begin{align*}
&  \frac{\i }{4}H_{0}^{(1)}\left(  k|\mathbf{x}-\mathbf{x}_{j}|\right)
=\frac{\i }{4}\frac{1}{\i \pi}\int_{-\infty}^{\infty}\frac
{e^{-\sqrt{\lambda^{2}-k^{2}}(y-y_{j})}}{\sqrt{\lambda^{2}-k^{2}}}%
e^{\i \lambda(x-x_{j})}d\lambda\\
={} &  \frac{\i }{4}\frac{1}{\i \pi}\int_{-\infty}^{\infty}%
\frac{e^{-\sqrt{\lambda^{2}-k^{2}}(y-y_{c})}}{\sqrt{\lambda^{2}-k^{2}}%
}e^{\i \lambda(x-x_{c})}\cdot e^{-\sqrt{\lambda^{2}-k^{2}}(y_{c}%
-y_{j})+\i \lambda(x_{c}-x_{j})}d\lambda\\
={} &  \frac{\i }{4}\frac{1}{\i \pi}\int_{-\infty}^{\infty}%
\frac{e^{-\sqrt{\lambda^{2}-k^{2}}(y-y_{c})}}{\sqrt{\lambda^{2}-k^{2}}%
}e^{\i \lambda(x-x_{c})}\cdot g\left(  k\rho_{j},-\i  e^{-\i
\theta_{j}}w(\lambda)\right)  d\lambda\\
={} &  \frac{\i }{4}\frac{1}{\i \pi}\int_{-\infty}^{\infty}%
\frac{e^{-\sqrt{\lambda^{2}-k^{2}}(y-y_{c})}}{\sqrt{\lambda^{2}-k^{2}}%
}e^{\i \lambda(x-x_{c})}\cdot\sum_{p=-\infty}^{\infty}J_{p}(k\rho
_{j})e^{-\i  p\theta_{j}}\left(  -\i  w(\lambda)\right)  ^{p}d\lambda\\
={} &  \frac{\i }{4}\sum_{p=-\infty}^{\infty}J_{p}(k\rho_{j})e^{-\i
p\theta_{j}}\cdot\frac{1}{\i \pi}\int_{-\infty}^{\infty}\frac
{e^{-\sqrt{\lambda^{2}-k^{2}}(y-y_{c})}}{\sqrt{\lambda^{2}-k^{2}}}%
e^{\i \lambda(x-x_{c})}\left(  -\i  w(\lambda)\right)  ^{p}d\lambda\\
={} &  \frac{\i }{4}\sum_{p=-\infty}^{\infty}J_{p}(k\rho_{j})e^{-\i
p\theta_{j}}\cdot H_{p}^{(1)}(k\rho_{c})e^{\i  p\theta_{c}},
\end{align*}
here
\begin{equation}
w(\lambda)=\frac{\lambda-\sqrt{\lambda^{2}-k^{2}}}{k}.
\end{equation}
The interchangeability of the sum and the integration is verified by the
validity of the identity itself, i.e. the Graf's addition theorem.

\subsection{The Green's function in layered media}

Consider a horizontally layered medium with $L$ interfaces located at
$y=d_{l}$, $0\leq l\leq L-1$, arranged from top to bottom as $l$ increases.
Each interface $y=d_{l}$ separates layer $l$ above layer $l+1$. Each layer $l$
is homogeneous with a wave number $k_{l}>0$, $0\leq l\leq L$.

We assume $s$ labels the layer where the source $\mathbf{x}^{\prime
}=(x^{\prime},y^{\prime})$ locates, and $t$ the layer where the target
$\mathbf{x}=(x,y)$ locates, $0\leq s,t\leq L$.

The layered Green's function $G(\mathbf{x},\mathbf{x}^{\prime})$ for the
Helmholtz equation is a piecewise function for source $\mathbf{x}^{\prime}$
and target $\mathbf{x}$ from possibly different layers. Within each layer,
\begin{equation}
\Delta G(\mathbf{x},\mathbf{x}^{\prime})+k_{t}^{2}G(\mathbf{x},\mathbf{x}%
^{\prime})=-\delta(\mathbf{x},\mathbf{x}^{\prime}),\label{eq-layerHelmholtz}%
\end{equation}
with two interface conditions at $y=d_{l}$ of the form
\begin{equation}
\left[  a_{t}G+b_{t}\frac{\partial G}{\partial\mathbf{n}}\right]
=0,\label{eq-interface-cond-form}%
\end{equation}
where the bracket $[\cdot]$ refers to the jump of the quantity inside at the
interface, and $a_{t}$ and $b_{t}$ are some complex numbers (depending on the
layer number $t$).

Note that the right-hand side of equation \cref{eq-layerHelmholtz} is nonzero
only when $\mathbf{x}$ and $\mathbf{x}^{\prime}$ are in the same layer, i.e.
$s=t$. Define
\begin{equation}
\label{eq-reaction-field}u^{\text{r}}(\mathbf{x},\mathbf{x}^{\prime
})=G(\mathbf{x},\mathbf{x}^{\prime})-\delta_{t,s}G_{s}^{\text{f}}%
(\mathbf{x},\mathbf{x}^{\prime}),
\end{equation}
here $\delta_{t,s}$ is the Kronecker delta function, $G_{s}^{\text{f}%
}(\mathbf{x},\mathbf{x}^{\prime})=\frac{\i}{4}H_{0}^{(1)}\left( k_{s}%
|\mathbf{x}-\mathbf{x}^{\prime}|\right)  $ is the free-space Green's function
with wave number $k_{s}$. $u^{\text{r}}$ is called the reaction field using the
terminology of electrostatics \cite{cai2013}, and satisfies an homogeneous Helmholtz equation
within each layer. We have the following proposition for the reaction field
$u^{\text{r}}$ and the details are given in \cite{bo2018taylorfmm}.
\begin{proposition}[decomposition of the reaction field]\label{lemma-decomp-rf}
Suppose the Helmholtz problem in layered media is well-posed.
Then, the reaction field $u^{\mathrm{r}}$ can be decomposed into the following sum
\begin{equation}\label{eq-urf-decomp}
\begin{aligned}
u^{\mathrm{r}}(\bd{x},\bd{x}')= {}& u_{ts}^{\uparrow\uparrow}\left( \bd{x}, \bd{x}'; \sigma_{ts}^{\uparrow \uparrow} \right)+u_{ts}^{\uparrow\downarrow}\left( \bd{x}, \bd{x}'; \sigma_{ts}^{\uparrow \downarrow} \right) \\
&+u_{ts}^{\downarrow\uparrow}\left( \bd{x}, \bd{x}'; \sigma_{ts}^{\downarrow \uparrow} \right)+u_{ts}^{\downarrow\downarrow}\left( \bd{x}, \bd{x}'; \sigma_{ts}^{\downarrow \downarrow} \right)=\sum_{\ast \star}u_{ts}^{\ast\star}\left( \bd{x}, \bd{x}'; \sigma_{ts}^{\ast \star} \right)
\end{aligned}
\end{equation}
of integrals
\begin{equation}\label{eq-urf-decomp-int}
u_{ts}^{\ast\star}\left( \bd{x}, \bd{x}'; \sigma_{ts}^{\ast \star} \right)=\int_{-\infty}^{\infty} \mathcal{E}_{ts}^{\ast\star}(\bd{x},\bd{x}',\lambda) \sigma_{ts}^{\ast \star}(\lambda)d\lambda
\end{equation}
with an exponential factor
\begin{equation}\label{eq-E-urf-decomp-expo}
\mathcal{E}_{ts}^{\ast\star}(\bd{x},\bd{x}',\lambda)=e^{-\sqrt{\lambda^2-k_t^2}\tau^{\ast}(y-d_t^{\ast})-\sqrt{\lambda^2-k_s^2}\tau^{\star}(y'-d_s^{\star})+\i\lambda(x-x')}
\end{equation}
and a coefficient term $\sigma_{ts}^{\ast \star}(\lambda)$ which does not depend on the coordinates of $\bd{x}$ and $\bd{x}'$ in the integrand.
Here, $\ast, \star$ ranged in $\{ \uparrow, \downarrow \}$ mark the vertical field propagation directions corresponding to the target and the source, respectively, while the incoming options from $y=\pm\infty$ are prohibited from the sum in \cref{eq-urf-decomp} (for instance, if $\bd{x}$ and $\bd{x}'$ are both in the top layer, then \cref{eq-urf-decomp} becomes $u^{\mathrm{r}}=u_{00}^{\uparrow\uparrow}$).
The interfaces $d_l^\uparrow = d_l$ for $l\ne L$, and $d_l^\downarrow = d_{l-1}$ for $l \ne 0$.
$\tau^{\uparrow} = 1$, $\tau^{\downarrow} = -1$, together they will guarantee $\tau^{\ast}(y-d_t^{\ast})>0$ and $\tau^{\star}(y'-d_s^{\star})>0$.
\end{proposition}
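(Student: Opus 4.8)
The plan is to Fourier-transform \cref{eq-layerHelmholtz} and the interface relations \cref{eq-interface-cond-form} in the horizontal variable $x$, reduce the problem to a one-dimensional two-point boundary value problem in $y$ with $\lambda$ as a parameter, and then read off the four-term structure directly from the explicit form of its solution; this is the classical spectral-domain construction of the layered Green's function, organised so that the coordinate dependence is confined to a single bounded exponential factor.

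Write $\gamma_l(\lambda)=\sqrt{\lambda^{2}-k_l^{2}}$ with the branch of \cref{sommerfeld}. After transforming $x\mapsto\lambda$, \cref{eq-layerHelmholtz} becomes, in each layer, $\hat G''-\gamma_t^{2}\hat G=-\,\delta_{t,s}\,\delta(y-y')e^{-\i\lambda x'}$; \cref{eq-interface-cond-form} becomes a pair of jump conditions at each $y=d_l$; and well-posedness of the layered Helmholtz problem furnishes both the decay (radiation) conditions as $y\to\pm\infty$ and unique solvability of the resulting system. In layer $l$ the homogeneous equation has the two-dimensional solution space spanned by the interface-referenced exponentials $e^{-\gamma_l(y-d_l)}$ and $e^{-\gamma_l(d_{l-1}-y)}$, both of which decay on the slab occupied by the layer since $\Re\gamma_l\ge0$ there; the decay condition discards in the top layer the mode that grows at $y=+\infty$ (the one marked ``incoming from $+\infty$''), leaving only $e^{-\gamma_0(y-d_0)}$, and similarly only $e^{-\gamma_L(d_{L-1}-y)}$ survives in the bottom layer. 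The free-space particular solution $\hat G_s^{\mathrm f}$ in layer $s$, evaluated at the two interfaces bounding that layer, produces exactly the two ``source exponentials'' $e^{-\gamma_s(y'-d_s)}$ and $e^{-\gamma_s(d_{s-1}-y')}$, i.e.\ $e^{-\gamma_s\tau^{\star}(y'-d_s^{\star})}$ for $\star=\uparrow$ and $\star=\downarrow$ respectively (with only one of them present when $s\in\{0,L\}$), and the positivity of $\tau^{\star}(y'-d_s^{\star})$ and $\tau^{\ast}(y-d_t^{\ast})$ is automatic from the fact that $\bd x'$ and $\bd x$ lie inside their layers.

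Next I would impose the $2L$ interface conditions. Expressed in the interface-referenced bases, the coefficient matrix of the resulting linear system depends only on $\lambda$, the $\gamma_l$, the layer thicknesses through factors $e^{-\gamma_l(d_{l-1}-d_l)}$, and the constants $a_l,b_l$; its right-hand side carries $(\bd x,\bd x')$-dependence only through $e^{-\i\lambda x'}$ and the two source exponentials above. Solving it (uniquely, by well-posedness) expresses the amplitudes of the two target-referenced exponentials $e^{-\gamma_t\tau^{\ast}(y-d_t^{\ast})}$ in layer $t$ as linear combinations of the two source exponentials, with coefficients depending only on $\lambda$ and the fixed layer data. Substituting back, subtracting $\delta_{t,s}\hat G_s^{\mathrm f}$ (which away from $y=y'$ is itself a combination of these exponentials, so $\hat u^{\mathrm r}$ retains the same structure in layer $s$), and inverting the transform in $\lambda$, one obtains $\hat u^{\mathrm r}$ and hence $u^{\mathrm r}$ as a sum of at most four integrals of the form \cref{eq-urf-decomp-int}, one per admissible pair $(\ast,\star)\in\{\uparrow,\downarrow\}^{2}$, with $\sigma_{ts}^{\ast\star}(\lambda)$ read off from the solved system and manifestly independent of $\bd x,\bd x'$. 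The pairs excluded from \cref{eq-urf-decomp} are exactly those whose target or source exponential would be referenced to a non-existent interface at $y=\pm\infty$.

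The step I expect to be delicate is precisely this bookkeeping: verifying that, once all intermediate-layer amplitudes have been eliminated, every bit of $(\bd x,\bd x')$-dependence has been collected into $\mathcal E_{ts}^{\ast\star}$ and the residual factor $\sigma_{ts}^{\ast\star}$ depends on $\lambda$ alone. The cleanest route is induction on the number of layers separating $s$ and $t$, propagating the solution one interface at a time by $2\times2$ transfer/scattering matrices whose entries involve only $\gamma_l$, $a_l$, $b_l$ and the thickness factors $e^{-\gamma_l(d_{l-1}-d_l)}$, so that no coordinate dependence can be generated. A secondary technical point is to justify the inverse Fourier transform and the convergence of \cref{eq-urf-decomp-int}: one needs $\sigma_{ts}^{\ast\star}$ analytic off the branch cuts of the $\gamma_l$ and $O(1/|\lambda|)$ on the real axis, so that together with the genuine decay of $\mathcal E_{ts}^{\ast\star}$ (which includes at least a full layer thickness in the target layer plus the positive gaps $y'-d_s^{\star}$, $y-d_t^{\ast}$) the integrand is absolutely integrable; both properties follow from the explicit solution of the linear system and the well-posedness hypothesis. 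Finally, the degenerate cases $s=t$ and $s$ or $t\in\{0,L\}$ are handled just as above, with a correspondingly smaller interface system; full details are in \cite{bo2018taylorfmm}.
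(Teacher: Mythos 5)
Your proposal is correct and follows essentially the same route the paper takes: the paper defers the full details to \cite{bo2018taylorfmm}, but its own outline in \Cref{Appendix-sigma} is precisely your spectral-domain construction --- Fourier transform in $x$, interface-referenced exponentials in each layer, radiation conditions killing the incoming modes in the top and bottom layers, and the linear systems \cref{eq-LS-sigma} whose coefficients lie in the field $\mathbb{F}$ and hence depend on $\lambda$ alone. One small correction: you do not need $\sigma_{ts}^{\ast\star}=O(1/|\lambda|)$ for convergence of \cref{eq-urf-decomp-int}; the paper only establishes a polynomial bound (\cref{theorem-poly-bound}), which suffices because the exponential factor $\mathcal{E}_{ts}^{\ast\star}$ decays thanks to the strict positivity of $\tau^{\ast}(y-d_t^{\ast})$ and $\tau^{\star}(y'-d_s^{\star})$.
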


\begin{remark}
The specific form of the exponential term $\mathcal{E}_{ts}^{\ast\star}(\bd{x},\bd{x}',\lambda)$ is introduced to ensure that each
coefficient term $\sigma_{ts}^{\ast\star}(\lambda)$ will have a polynomial
growth rate under certain conditions, to be elaborated in
\Cref{Appendix-sigma}. The polynomial growth of $\sigma_{ts}%
^{\ast\star}(\lambda)$ will be needed for the exponential convergence estimate of ME,
LE, M2L, and L2L expansions. This specific form also results in a dependence
of the exponential convergence on a special ``polarization distance" between a
source and a target in the layered media, as defined in \cref{poldist} and depicted in \Cref{fig:picme}.
\end{remark}

The integrand of \cref{eq-urf-decomp-int} may have real poles which cause
difficulty when being integrated.
However, such integrals should be treated as
the limiting case of the field in lossy physical media.
To understand the real
poles in the integrand, we first introduce the necessary branch cut of the
square roots. For any $z=re^{\i \theta}\in\mathbb{C}$ with $r\geq0$,
$\theta\in\lbrack-\pi,\pi)$, define
\begin{equation}
\sqrt{z}=\sqrt{r}e^{\i \frac{\theta}{2}}.\label{eq-branch-cut}%
\end{equation}
For each square root $\sqrt{\lambda^{2}-k_{l}^{2}}$, the corresponding branch
cut in the $\lambda$-plane is the union of the imaginary axis and the real
interval $[-k_{l},k_{l}]$. In a realistic physical case where the medium in
layer $l$ is lossy with a perturbed wave number $\tilde{k}_{l}=k_{l}%
+\epsilon_{l}\i $, $\epsilon_{l}>0$, the perturbed branch cut is then shown
in \cref{fig:picctrsmf}. The branch cut of $\sqrt{\lambda^{2}-k_{l}^{2}}$ is
the limit of the perturbed one as $\epsilon_{l}\rightarrow0^{+}$.
\begin{figure}[tbh]
\centering \includegraphics[scale=0.5]{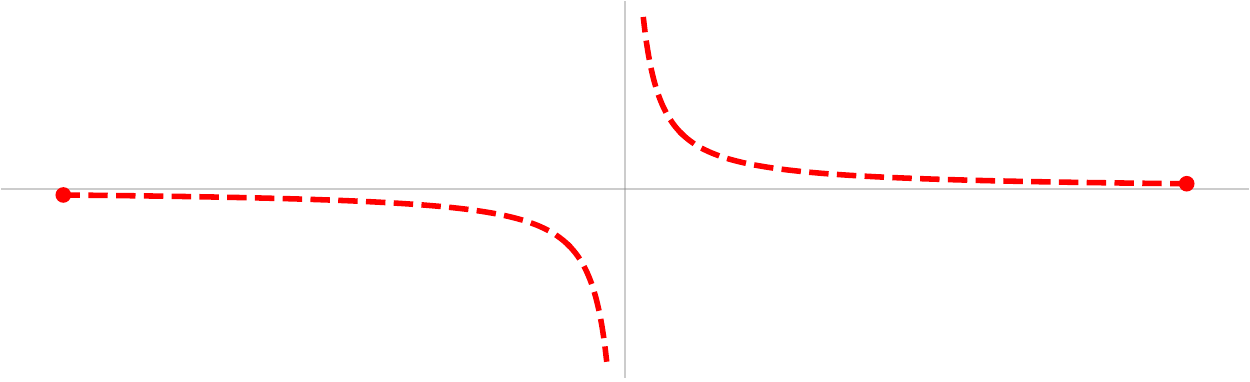}\caption{The
perturbed branch cut starting from $\pm\tilde{k}_{l}$ where $\tilde{k}%
_{l}=k_{l}+\epsilon_{l}\i $}%
\label{fig:picctrsmf}%
\end{figure}

Let $\lambda_{\nu}$ be a real pole of $\sigma_{ts}^{\ast\star}(\lambda) $ in
the integrand of \cref{eq-urf-decomp-int}, which is known as a surface wave
pole \cite{hu09, snyder12}. Integration across the surface wave pole is
understood as the limiting case of the perturbed system with lossy media as
mentioned above. For simplicity, suppose $\sigma_{ts}^{\ast\star}%
(\lambda)=\sigma(\lambda;k_{1},\cdots,k_{L})$ is the limit of the perturbed
field $\sigma(\lambda;\tilde{k}_{1},\cdots,\tilde{k}_{L})$ with pole
$\tilde{\lambda}_{\nu}$, and $\tilde{\lambda}_{\nu}\rightarrow\lambda_{\nu}%
\in(a,b)$ as all the $\epsilon_{l}\rightarrow0^{+}$. Let $\sigma_{\nu}%
=\lim_{\lambda\rightarrow\lambda_{\nu}}\sigma(\lambda)(\lambda-\lambda_{\nu}%
)$. Given any smooth function $h(\lambda)$, the limiting integral $\int
_{a}^{b}h(\lambda)\sigma(\lambda)d\lambda$ is evaluated by the formula
\begin{equation}
\begin{aligned} \int_a^b h(\lambda)\sigma(\lambda;\tilde{k}_1,\cdots,\tilde{k}_L)d\lambda \to & \int_a^b \left(h(\lambda)\sigma(\lambda)-\frac{h(\lambda_\nu)\sigma_\nu}{\lambda-\lambda_\nu}\right)d\lambda \\ & + \mathrm{p.v.}\int_{a}^{b} \frac{h(\lambda_\nu)\sigma_\nu}{\lambda-\lambda_\nu} d\lambda \pm \i\pi h(\lambda_\nu)\sigma_\nu, \end{aligned}\label{eq-int-pole}%
\end{equation}
here the $\pm$ sign is positive (or negative) when the perturbed pole
$\tilde{\lambda}_{\nu}\rightarrow\lambda_{\nu}$ from the upper (or the lower)
half of the complex plane, and the principal value part vanishes if
$(a,b)=(-\infty,+\infty)$.

In a well-posed physical problem, the poles will be at most of order one, and
$\tilde{\lambda}_{\nu}$ should keep in one side of the half planes as all the
perturbation parameters $\epsilon_{l}$ are sufficiently small, otherwise the
limit of the integral does not exist and the field is not well-defined. Also,
$0$ can not be a surface wave pole, otherwise the surface wave does not
propagate \cite{hu09, snyder12}.
\begin{remark}
Modes of the layered system are classified as the radiation modes, the guided modes (corresponding to the real poles) and the leaky modes (corresponding to other complex poles) \cite{hu09}.
\end{remark}

\subsection{The far-field expansions and their exponential convergence}

Here, we derive the far-field expansions for each integral $u_{ts}^{\ast\star
}\left(  \mathbf{x},\mathbf{x}^{\prime};\sigma_{ts}^{\ast\star}\right) $
in a natural generalization of the free-space case discussed in
\Cref{section-free-space}, then show their exponential convergence. The
derivation relies on the following two types of series expansions.

Suppose $(\rho_{0},\theta_{0})$ are the polar coordinates of $(x_{0},y_{0})$.
Denote
\begin{equation}
w_{l}(\lambda)=\frac{\lambda-\sqrt{\lambda^{2}-k_{l}^{2}}}{k_{l}},\text{
}0\leq l\leq L.
\end{equation}
By using the Bessel generating function \cref{eq-genBessel}, we have
\begin{align}%
\begin{split}
\label{eq-ME-deriv1}e^{-\sqrt{\lambda^{2}-k_{s}^{2}}\tau^{\star}y_{0}%
-\i\lambda x_{0}} ={}  &  g\left(  k_{s} \rho_{0}, -\i e^{\i\tau^{\star}%
\theta_{0}}w_{s}(\lambda) \right) \\
={}  &  \sum_{p=-\infty}^{\infty}J_{p}(k_{s}\rho_{0})e^{\i p \tau^{\star
}\theta_{0}} \cdot\left(  -\i w_{s}(\lambda) \right)  ^{p},
\end{split}
\\%
\begin{split}
\label{eq-ME-deriv2}e^{-\sqrt{\lambda^{2}-k_{t}^{2}}\tau^{\ast}y_{0}+\i\lambda
x_{0}} ={}  &  g\left(  k_{t} \rho_{0}, \i e^{\i\tau^{\ast}\theta_{0}}%
w_{t}(\lambda)^{-1} \right) \\
={}  &  \sum_{m=-\infty}^{\infty}J_{m}(k_{s}\rho_{0})e^{\i m \tau^{\ast}%
\theta_{0}} \cdot\left(  \i w_{t}(\lambda)^{-1} \right)  ^{m}.
\end{split}
\end{align}

For the ME, we split the difference $\mathbf{x}-\mathbf{x}^{\prime}=\left(
\mathbf{x}-\mathbf{x}_{c}\right)  +\left(  \mathbf{x}_{c}-\mathbf{x}^{\prime
}\right)  $, namely, we shift the source $\mathbf{x}^{\prime}$ to a common
source center $\mathbf{x}_{c}=(x_{c},y_{c})$ (assumed to be on the same side
of the interface $y=d_{s}^{\star}$, i.e. $y_{c}-d_{s}^{\star}$ and $y^{\prime
}-d_{s}^{\star}$ have the same sign). Let $(\rho_{c}^{\prime},\theta
_{c}^{\prime})$ be the polar coordinates of $\mathbf{x}^{\prime}%
-\mathbf{x}_{c}$. Using \cref{eq-ME-deriv1} with $(\rho_{0},\theta_{0}%
)=(\rho_{c}^{\prime},\theta_{c}^{\prime})$ and the separability of the plane
wave factor $\mathcal{E}_{ts}^{\star\star}(\mathbf{x},\mathbf{x}^{\prime})$
(\ref{eq-urf-decomp-int}), we get an approximation,
\begin{equation}
\begin{aligned} u_{ts}^{\ast\star}\left( \mathbf{x}, \mathbf{x}'; \sigma_{ts}^{\ast \star} \right) &=\int_{-\infty}^{\infty}\mathcal{E}_{ts}^{\ast\star}(\mathbf{x},\mathbf{x}',\lambda)\sigma_{ts}^{\ast \star}(\lambda)d\lambda \\ &=\int_{-\infty}^{\infty}\mathcal{E}_{ts}^{\ast\star}(\mathbf{x},\mathbf{x}_c,\lambda) \sigma_{ts}^{\ast \star}(\lambda) e^{-\sqrt{\lambda^2-k_s^2}\tau^\star(y'-y_c)+\i\lambda(x_c-x')} d\lambda \\ &= \int_{-\infty}^{\infty}\mathcal{E}_{ts}^{\ast\star}(\mathbf{x},\mathbf{x}_c,\lambda) \sigma_{ts}^{\ast \star}(\lambda)\sum_{p=-\infty}^{\infty}J_p(k_s\rho_c')e^{\i p \tau^\star \theta_c'} \left( -\i w_s(\lambda) \right)^p d\lambda\\ &\approx \sum_{|p|<P} I_{p}^{\ast\star}(\mathbf{x},\mathbf{x}_c)M_p^\star(\mathbf{x}',\mathbf{x}_c) \end{aligned}\label{eq-ME}%
\end{equation}
where the expansion function
\begin{equation}
I_{p}^{\ast\star}(\mathbf{x},\mathbf{x}_{c})=\int_{-\infty}^{\infty
}\mathcal{E}_{ts}^{\ast\star}(\mathbf{x},\mathbf{x}_{c},\lambda)\sigma
_{ts}^{\ast\star}(\lambda)\left(  -\i  w_{s}(\lambda)\right)  ^{p}d\lambda,
\end{equation}
and the ME cofficient
\begin{equation}
M_{p}^{\star}(\mathbf{x}^{\prime},\mathbf{x}_{c})=J_{p}(k_{s}\rho_{c}^{\prime
})e^{\i  p\tau^{\star}\theta_{c}^{\prime}}.
\end{equation}

For LE, we split the difference $\mathbf{x}-\mathbf{x}^{\prime}=\left(
\mathbf{x}-\mathbf{x}_{c}^{l}\right)  +\left(  \mathbf{x}_{c}^{l}%
-\mathbf{x}^{\prime}\right)  $, namely, we shift the target $\mathbf{x}$ to a
common target (local) center $\mathbf{x}_{c}^{l}=(x_{c}^{l},y_{c}^{l})$
(assumed to be on the same side of the interface $y=d_{t}^{\ast}$). Let
$(\rho^{l},\theta^{l})$ be the polar coordinates of $\mathbf{x}-\mathbf{x}%
_{c}^{l}$. Using \cref{eq-ME-deriv2} with $(\rho_{0},\theta_{0})=(\rho
^{l},\theta^{l})$ and the separability of the plane wave factor $\mathcal{E}%
_{ts}^{\star\star}(\mathbf{x},\mathbf{x}^{\prime})$ (\ref{eq-urf-decomp-int}),
we get an approximation,
\begin{equation}
\begin{aligned} &u_{ts}^{\ast\star}\left( \mathbf{x}, \mathbf{x}'; \sigma_{ts}^{\ast \star} \right) \\ =& \int_{-\infty}^{\infty}\mathcal{E}_{ts}^{\ast\star}(\mathbf{x}_c^l,\mathbf{x}',\lambda) \sigma_{ts}^{\ast \star}(\lambda) \sum_{m=-\infty}^{\infty}J_m(k_t\rho^l)e^{\i m \tau^\ast \theta^l} \cdot \left( \i w_t(\lambda)^{-1} \right)^m d\lambda\\ \approx & \sum_{|m|<M} L_{m}^{\ast\star}(\mathbf{x}_c^l,\mathbf{x}') K_m^{\ast}(\mathbf{x},\mathbf{x}_c^l) \end{aligned}\label{eq-LE}%
\end{equation}
where the expansion function
\begin{equation}
K_{m}^{\ast}(\mathbf{x},\mathbf{x}_{c}^{l})=J_{m}(k_{t}\rho^{l})e^{\i
m\tau^{\ast}\theta^{l}},
\end{equation}
and the LE coefficient
\begin{equation}
L_{m}^{\ast\star}(\mathbf{x}_{c}^{l},\mathbf{x}^{\prime})=\int_{-\infty
}^{\infty}\mathcal{E}_{ts}^{\ast\star}(\mathbf{x}_{c}^{l},\mathbf{x}^{\prime
},\lambda)\sigma_{ts}^{\ast\star}(\lambda)\left(  \i  w_{t}(\lambda
)^{-1}\right)  ^{m}d\lambda.
\end{equation}

The M2L can be derived directly by using the
splitting $\mathbf{x}_{c}^{l}-\mathbf{x}^{\prime}=\left(  \mathbf{x}_{c}%
^{l}-\mathbf{x}_{c}\right)  +\left(  \mathbf{x}_{c}-\mathbf{x}^{\prime
}\right)  $ in $L_{m}^{\ast\star}(\mathbf{x}_{c}^{l},\mathbf{x}^{\prime})$,%
\begin{equation}
\begin{aligned} &L_{m}^{\ast\star}(\mathbf{x}_c^l,\mathbf{x}') \\ =& \int_{-\infty}^{\infty} \mathcal{E}_{st}^{\ast\star}(\mathbf{x}_c^l,\mathbf{x}_c,\lambda) \sigma_{st}^{\ast\star}(\lambda)\left(\i w_t(\lambda)^{-1}\right)^{m} \sum_{p=-\infty}^{\infty}J_p(k_s\rho_c')e^{\i p \tau^\star \theta_c'}\cdot\left(-\i w_s(\lambda) \right)^p d\lambda \\ \approx & \sum_{|p|<P} A_{mp}^{\ast\star}(\mathbf{x}_c^l,\mathbf{x}_c)M_p^{\star}(\mathbf{x}',\mathbf{x}_c). \end{aligned}\label{eq-M2L}%
\end{equation}
where the translation coefficients $A_{mp}^{\ast\star}(\mathbf{x}_{c}%
^{l},\mathbf{x}_{c})$ are given by
\[
A_{mp}^{\ast\star}(\mathbf{x}_{c}^{l},\mathbf{x}_{c})=\int_{-\infty}^{\infty
}\mathcal{E}_{ts}^{\ast\star}(\mathbf{x}_{c}^{l},\mathbf{x}_{c},\lambda
)\sigma_{ts}^{\ast\star}(\lambda)\left(  -\i  w_{s}(\lambda)\right)
^{p}\left(  \i  w_{t}(\lambda)^{-1}\right)  ^{m}d\lambda.
\]

The L2L shifts the local center $\mathbf{x}_{c}^{l}$ in
each integral $L_{m}^{\ast\star}(\mathbf{x}_{c}^{l},\mathbf{x}^{\prime})$ to a
new local center $\tilde{\mathbf{x}}_{c}^{l} = (\tilde{x}_{c}^{l},\tilde
{y}_{c}^{l})$. Let $(\tilde{\rho},\tilde{\theta})$ be the polar coordinates of
$\tilde{\mathbf{x}}_{c}^{l} - \mathbf{x}_{c}^{l}$. Using \cref{eq-ME-deriv2}
with $(\rho_{0},\theta_{0}) = (\tilde{\rho},\tilde{\theta})$,
\begin{equation}
\label{eq-L2L}\begin{aligned} &L_{m}^{\ast\star}(\tilde{\bd{x}}_c^l,\mathbf{x}')\\ =&\int_{-\infty}^{\infty}\mathcal{E}_{ts}^{\ast\star}(\mathbf{x}_c^l,\mathbf{x}',\lambda) \sigma_{ts}^{\ast \star}(\lambda)\left( \i w_t(\lambda)^{-1} \right)^m \sum_{p=-\infty}^{\infty} J_p(k_t \tilde{\rho})e^{\i p \tau^\ast \tilde{\theta}}\cdot \left( \i w_t(\lambda)^{-1} \right)^p d\lambda \\ \approx & \sum_{|p+m|<P}J_p(k_t \tilde{\rho})e^{\i p \tau^\ast \tilde{\theta}} \int_{-\infty}^{\infty}\mathcal{E}_{ts}^{\ast\star}(\mathbf{x}_c^l,\mathbf{x}',\lambda) \sigma_{ts}^{\ast \star}(\lambda)\left( \i w_t(\lambda)^{-1} \right)^m \left( \i w_t(\lambda)^{-1} \right)^p d\lambda \\ =& \sum_{|p|<P} L_{p}^{\ast\star}(\mathbf{x}_c^l,\mathbf{x}')K_{p-m}^{\ast}(\tilde{\bd{x}}_c^l,\mathbf{x}_c^l). \end{aligned}
\end{equation}

Next, before we present the main result of this paper on the convergence of
the series expansions above, we introduce the concept of \textquotedblleft
polarized distance\textquotedblright\ unique to the interaction in layered
media. Given layer indices $s$, $t$ and direction marks $\ast,\star
\in\{\uparrow,\downarrow\}$, for points $\mathbf{x}_{1}=(x_{1},y_{1})$ and
$\mathbf{x}_{2}=(x_{2},y_{2})$, a \textquotedblleft polarized
distance\textquotedblright\ is defined as
\begin{equation}
D_{ts}^{\ast\star}(\mathbf{x}_{1},\mathbf{x}_{2})=\sqrt{(x_{1}-x_{2}%
)^{2}+\left(  \tau^{\ast}(y_{1}-d_{t}^{\ast})+\tau^{\star}(y_{2}-d_{s}^{\star
})\right)  ^{2}},
\label{poldist}
\end{equation}
provided both $\tau^{\ast}(y_{1}-d_{t}^{\ast})>0$ and $\tau^{\star}%
(y_{2}-d_{s}^{\star})>0$. (Note that they are not symmetric with respect to
$\mathbf{x}_{1}$ and $\mathbf{x}_{2}$.)

\begin{theorem}[exponential convergence of far-field expansions in layered media]\label{thm-exp-conv}Suppose the integral $u_{ts}^{\ast\star}\left( \bd{x}, \bd{x}'; \sigma_{ts}^{\ast \star} \right)$ is derived from a well-posed Helmholtz problem in layered media as in \cref{lemma-decomp-rf}.
Then, we have the truncation error of ME \cref{eq-ME}
\begin{equation}\label{eq-err-ME}
\left| u_{ts}^{\ast\star}\left( \bd{x}, \bd{x}'; \sigma_{ts}^{\ast \star} \right) -\sum_{|p|<P} I_{p}^{\ast\star}(\bd{x},\bd{x}_c)M_{p}^\star(\bd{x}',\bd{x}_c) \right| \le c^{\mathrm{ME}}(P) \left( \frac{|\bd{x}'-\bd{x}_c|}{D_{ts}^{\ast\star}(\bd{x},\bd{x}_c)} \right)^P,
\end{equation}
the truncation error of LE \cref{eq-LE}
\begin{equation}\label{eq-err-LE}
\left|u_{ts}^{\ast\star}\left( \bd{x}, \bd{x}'; \sigma_{ts}^{\ast \star} \right) - \sum_{|m|<M} L_{m}^{\ast\star}(\bd{x}_c^l,\bd{x}')K_m^{\ast}(\bd{x},\bd{x}_c^l) \right| \le c^{\mathrm{LE}}(M) \left( \frac{|\bd{x}-\bd{x}_c^l|}{D_{ts}^{\ast\star}(\bd{x}_c^l,\bd{x}')} \right)^M,
\end{equation}
the truncation error of M2L \cref{eq-M2L} for each LE coefficient
\begin{equation}\label{eq-err-M2L}
\left|L_{m}^{\ast\star}(\bd{x}_c^l,\bd{x}') - \sum_{|p|<P} A_{mp}^{\ast\star}(\bd{x}_c^l,\bd{x}_c)M_{p}^\star(\bd{x}',\bd{x}_c)\right| \le c^{\mathrm{M2L}}_m(P) \left( \frac{|\bd{x}'-\bd{x}_c|}{D_{ts}^{\ast\star}(\bd{x}_c^l,\bd{x}_c)} \right)^P
\end{equation}
and the truncation error of L2L \cref{eq-L2L} for each LE coefficient
\begin{equation}\label{eq-err-L2L}
\left|L_{m}^{\ast\star}(\tilde{\bd{x}}_c^l,\bd{x}')-\sum_{|p|<P} L_{p}^{\ast\star}(\bd{x}_c^l,\bd{x}')K_{p-m}^{\ast}(\tilde{\bd{x}}_c^l,\bd{x}_c^l)\right| \le c^{\mathrm{L2L}}_m(P)\left( \frac{|\tilde{\bd{x}}_{c}^{l} - \bd{x}_{c}^{l}|}{D_{ts}^{\ast\star}(\bd{x}_c^l,\bd{x}')} \right)^P
\end{equation}
for some functions $c^{\mathrm{ME}}(\cdot)$, $c^{\mathrm{LE}}(\cdot)$, $c_{m}^{\mathrm{M2L}}(\cdot)$ and $c_{m}^{\mathrm{L2L}}(\cdot)$ having polynomial growth rates, provided that for some given $c_0>1$, the far-field conditions
\begin{align}\label{eq-far-field-condition}
\begin{split}
D_{ts}^{\ast\star}(\bd{x},\bd{x}_c) > c_0 |\bd{x}'-\bd{x}_c|,&\quad D_{ts}^{\ast\star}(\bd{x}_c^l,\bd{x}') > c_0 |\bd{x}-\bd{x}_c^l|, \\
\text{ } D_{ts}^{\ast\star}(\bd{x}_c^l,\bd{x}_c) > c_0 |\bd{x}'-\bd{x}_c|,&\quad D_{ts}^{\ast\star}(\bd{x}_c^l,\bd{x}') > c_0 |\tilde{\bd{x}}_{c}^{l} - \bd{x}_{c}^{l}|
\end{split}
\end{align}
hold, respectively.
If all the sources, targets and the centers involved above are bounded by a given box, the distances from every center to its nearby interface have a given nonzero lower bound, and there exist $0<\rho_m \le \rho_M$ such that
\begin{equation*}
\rho_m \le D_{ts}^{\ast\star}(\bd{x},\bd{x}_c),D_{ts}^{\ast\star}(\bd{x}_c^l,\bd{x}'),D_{ts}^{\ast\star}(\bd{x}_c^l,\bd{x}_c),D_{ts}^{\ast\star}(\bd{x}_c^l,\bd{x}')\le \rho_M,
\end{equation*}
then the functions $c^{\mathrm{ME}}(\cdot)$, $c^{\mathrm{LE}}(\cdot)$, $c_{m}^{\mathrm{M2L}}(\cdot)$ and $c_{m}^{\mathrm{L2L}}(\cdot)$ can be chosen to be determined by these bounds, without dependence on the actual positions of the source locations.
\end{theorem}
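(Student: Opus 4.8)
The plan is to reduce all four estimates to one ``master'' bound for the integral, against $\mathcal{E}_{ts}^{\ast\star}\sigma_{ts}^{\ast\star}$, of the tail of a Bessel generating series, to prove that bound first when the far-field point sits directly above or below the center, and then in general by deforming onto a Cagniard--de Hoop contour along which the polarized distance \cref{poldist} is the decay rate. \textbf{Step 1 (reduction).} Subtracting the truncated sums in \cref{eq-ME}, \cref{eq-LE}, \cref{eq-M2L}, \cref{eq-L2L} from the corresponding full series, each remainder has the shape $\mathcal{T}_P=\int_{-\infty}^{\infty}\mathcal{E}_{ts}^{\ast\star}(\bd{x}_1,\bd{x}_2,\lambda)\,\sigma_{ts}^{\ast\star}(\lambda)\,\omega(\lambda)^{q}\sum_{|p|\ge P}J_p(k\rho)e^{\i p\phi}\omega(\lambda)^{p}\,d\lambda$, where $(\bd{x}_1,\bd{x}_2)$ runs over $(\bd{x},\bd{x}_c)$, $(\bd{x}_c^l,\bd{x}')$, $(\bd{x}_c^l,\bd{x}_c)$, $(\bd{x}_c^l,\bd{x}')$; $\omega$ is $-\i w_s$ with $k=k_s$ (ME, M2L) or $\i w_t^{-1}$ with $k=k_t$ (LE, L2L); $q=0$ (ME, LE) or $q=m$ (M2L, L2L, where additionally the index is shifted so $|p|\ge P-|m|$); and $(\rho,\phi)$ are the polar coordinates of the displacement appearing on the right of \cref{eq-err-ME}--\cref{eq-err-L2L}. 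Thus it suffices to prove $|\mathcal{T}_P|\le Q(P)(\rho/D)^{P}$ with $D=D_{ts}^{\ast\star}(\bd{x}_1,\bd{x}_2)$ and $Q$ of polynomial growth (possibly $m$-dependent), under the relevant far-field hypothesis in \cref{eq-far-field-condition}. By \Cref{lemma-Jn-bound}, splitting $p\ge P$ from $p\le-P$, the inner sum is dominated by $\sum_{p\ge P}\tfrac{1}{p!}\bigl(\tfrac{k\rho}{2}\bigr)^{p}\bigl(|\omega(\lambda)|^{p}+|\omega(\lambda)|^{-p}\bigr)$, and throughout $k\,|\omega(\lambda)|^{\pm1}$ equals one of $|\lambda+\sqrt{\lambda^{2}-k^{2}}|$, $|\lambda-\sqrt{\lambda^{2}-k^{2}}|$, both $\le2|\lambda|$ for real $|\lambda|>k$.

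\textbf{Step 2 (vertical configuration).} Suppose first that the horizontal offset $x_1-x_2$ vanishes, so $\mathcal{E}_{ts}^{\ast\star}(\bd{x}_1,\bd{x}_2,\cdot)$ is real and positive on $\mathbb{R}$ and $D$ reduces to $\hat a:=\tau^\ast(y_1-d_t^\ast)+\tau^\star(y_2-d_s^\star)>0$. Over $|\lambda|\le K:=\max(k_s,k_t)$ all factors are bounded, so this part contributes $O\bigl((Ck\rho)^{P}/P!\bigr)=o\bigl((\rho/D)^{P}\bigr)$, which is harmless. Over $|\lambda|>K$ I would use $\sqrt{\lambda^{2}-k_l^{2}}\ge|\lambda|-k_l$ together with the boundedness of $\sqrt{\lambda^{2}-k_t^{2}}-\sqrt{\lambda^{2}-k_s^{2}}$ (whose denominator $\sqrt{\lambda^{2}-k_t^{2}}+\sqrt{\lambda^{2}-k_s^{2}}$ is bounded away from $0$ when $k_s\ne k_t$) to get $|\mathcal{E}_{ts}^{\ast\star}|\le C_{1}e^{-\hat a|\lambda|}$, the bound $\tfrac{k\rho}{2}|\omega(\lambda)|^{\pm1}\le\rho|\lambda|$ from Step 1, and the polynomial growth $|\sigma_{ts}^{\ast\star}(\lambda)|\le C(1+|\lambda|)^{N}$ (\Cref{Appendix-sigma}). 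Integrating term by term via $\int_{K}^{\infty}e^{-\hat a\lambda}\lambda^{N+p}\,d\lambda\le(N+p)!/\hat a^{N+p+1}$ gives $\tfrac{C}{\hat a^{N+1}}\sum_{p\ge P}\tfrac{(N+p)!}{p!}(\rho/\hat a)^{p}$, and since $\rho/\hat a=\rho/D<1/c_{0}<1$ and $(N+p)!/p!\le(N+p)^{N}$ is polynomial in $p$, this sums to $Q(P)(\rho/D)^{P}$ with $Q$ of polynomial growth; the fixed factor $\omega^{m}$ (M2L, L2L) only raises the degree of $Q$ by $|m|$. The finitely many real surface-wave poles are treated via \cref{eq-int-pole}: after subtracting local singular parts---which integrate to fixed multiples of $\omega(\lambda_\nu)^{p}$, hence to $o((\rho/D)^{P})$ after summation against $J_p$---the remaining integrand is regular and the same bounds apply.

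\textbf{Step 3 (general configuration, the crux).} For $c:=x_1-x_2\ne0$, write $\sqrt{\lambda^{2}-k_t^{2}}\,\tau^\ast(y_1-d_t^\ast)+\sqrt{\lambda^{2}-k_s^{2}}\,\tau^\star(y_2-d_s^\star)=\hat a\sqrt{\lambda^{2}-k^{2}}+r(\lambda)$, where $k$ is the wave number attached to $\omega$, $\hat a>0$ is the sum of the two vertical offsets, and $r$ is bounded on $\mathbb{C}$ (again since $\sqrt{\lambda^{2}-k_t^{2}}+\sqrt{\lambda^{2}-k_s^{2}}$ is bounded below; take $r\equiv0$ if $k_s=k_t$). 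I would deform the contour of $\mathcal{T}_P$---equivalently of the corresponding expansion functions ($I_p^{\ast\star}$, $L_m^{\ast\star}$, \dots), for which the closing arcs at infinity clearly vanish---onto the Cagniard--de Hoop path $\lambda(u)=\bigl(\i cu\pm\hat a\sqrt{u^{2}+k^{2}}\bigr)/D$, $D=\sqrt{c^{2}+\hat a^{2}}=D_{ts}^{\ast\star}(\bd{x}_1,\bd{x}_2)$, on which $-\hat a\sqrt{\lambda^{2}-k^{2}}+\i c\lambda=-uD$ and hence $|\mathcal{E}_{ts}^{\ast\star}(\bd{x}_1,\bd{x}_2,\lambda(u))|\le C_{1}e^{-uD}$, picking up at most finitely many surface-wave-pole residues. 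A direct computation along the path gives $|\lambda(u)|\le\sqrt{u^{2}+k^{2}}$, $|\lambda'(u)|\le\sqrt2$, $\tfrac{k\rho}{2}|\omega(\lambda(u))|^{\pm1}\le\rho(u+k)$, and $|\sigma_{ts}^{\ast\star}(\lambda(u))|\le C(1+u)^{N}$; interchanging the now absolutely convergent ($\rho/D<1/c_0$) sum and integral, the term-by-term estimate of Step 2 goes through with $e^{-\hat a\lambda}\,d\lambda$ replaced by $e^{-uD}\,du$, producing $Q(P)(\rho/D)^{P}$---now genuinely with the polarized distance in the denominator. Each residue is proportional to $\mathcal{E}_{ts}^{\ast\star}(\bd{x}_1,\bd{x}_2,\lambda_\nu)\,\sigma_\nu\,\omega(\lambda_\nu)^{q}\sum_{|p|\ge P}J_p(k\rho)e^{\i p\phi}\omega(\lambda_\nu)^{p}$, whose last factor is $O\bigl((C'k\rho)^{P}/P!\bigr)=o\bigl((\rho/D)^{P}\bigr)$, so it is absorbed into $Q$. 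The main obstacle is precisely this step: identifying the correct Cagniard--de Hoop contour for the two-square-root, $\sigma$-weighted integrand, verifying that the closing arcs at infinity vanish so Cauchy's theorem applies, and bookkeeping the surface-wave-pole crossings under the limiting-absorption rule \cref{eq-int-pole}; the growth estimates along the contour are then routine.

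\textbf{Step 4 (assembly and uniformity).} Collecting Steps 1--3 yields \cref{eq-err-ME}--\cref{eq-err-L2L}, with $c^{\mathrm{ME}},c^{\mathrm{LE}}$ the resulting $Q$'s and $c_m^{\mathrm{M2L}},c_m^{\mathrm{L2L}}$ their $m$-dependent analogues; for L2L the shift $|p+m|\ge P$ costs $\le|m|$ powers of $\rho$, which are recovered from the linear growth of the $\omega=\i w_t^{-1}$ factors along the path, explaining the $m$-dependence. Finally, every constant entering $Q$---$\sup|\sigma_{ts}^{\ast\star}|$ on the relevant compacts and its growth exponent $N$ (controlled by the bounding box and the lower bounds on the center-to-interface distances, cf.\ \Cref{Appendix-sigma}), $K=\max_l k_l$, the bound on $r$, the constant $c_0$, the Cagniard Jacobian and the $w_s,w_t$ growth constants (depending only on the $k_l$, $\hat a$ and $c$, hence on the box and on $\rho_m,\rho_M$), and $\hat a$ (bounded above via the box and below via the interface-distance bounds)---depends only on the stated data, so $c^{\mathrm{ME}},c^{\mathrm{LE}},c_m^{\mathrm{M2L}},c_m^{\mathrm{L2L}}$ can be chosen uniformly in the source, target and center positions.
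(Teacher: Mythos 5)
Your overall strategy coincides with the paper's: reduce all four estimates to a single Bessel-type tail bound for an integral against $\mathcal{E}_{ts}^{\ast\star}\sigma_{ts}^{\ast\star}$ (the paper does this by absorbing $e^{(\sqrt{\lambda^2-k_s^2}-\sqrt{\lambda^2-k_t^2})\tau^{\ast}(y-d_t^{\ast})}$ into $f$, which is your $r(\lambda)$ trick), prove it first in the vertical configuration by term-by-term integration against the exponential decay (your Step 2 is a cruder but adequate substitute for \cref{lemma-Epp}), and then transfer to the general configuration by a Cagniard--de Hoop deformation on which the polarized distance becomes the decay rate. Your contour $\lambda(u)=(\i cu\pm\hat a\sqrt{u^{2}+k^{2}})/D$ is exactly the hyperbola $\Gamma$ of \cref{def-gamma}, and your growth estimates along it ($|\lambda(u)|\le\sqrt{u^{2}+k^{2}}$, $\tfrac{k\rho}{2}|\omega|^{\pm1}\le\rho(u+k)$) check out, as do the $(N+p)!/p!$ bookkeeping, the $m$-dependence of the M2L/L2L constants, and the uniformity discussion in Step 4.

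The genuine gap is the deformation itself, which you flag as ``the main obstacle'' and do not carry out --- and as sketched it would fail. You propose to deform the contour of the full real-line integral $\mathcal{T}_P$ onto the hyperbola, but the integrand is not meromorphic in the region between the real axis and $\Gamma$: every $\sqrt{\lambda^{2}-k_l^{2}}$, $0\le l\le L$, has a branch cut on $[-k_l,k_l]$ (and the hyperbola's vertex $k\cos\beta=k\hat a/D$ lies inside $(0,k)$), and $\sigma_{ts}^{\ast\star}$ has real guided-mode and complex leaky-mode poles, so Cauchy's theorem does not apply to the whole line. The paper's resolution (\cref{lemma-xy} feeding into \cref{thm-Bessel-type-expansion}) is precisely the content you are missing: (i) extract the real poles first via the decomposition \cref{eq-f-decomp-imag-poles} and the limiting rule \cref{eq-int-pole}, handling them with the pointwise \cref{lemma-pole}; (ii) split off the bounded interval $[-k',k']$ with $k'=4k_M+2\lambda_M+2\epsilon_0$, where no deformation is needed (\cref{lemma-bounded-interval}); (iii) deform only the tails $(\pm k',\pm\infty)$, joined to the hyperbola by a finite segment $\kappa$ (\cref{lemma-bounded-contour}), after verifying via $|\phi(\lambda')|\ge\lambda_M+k_M>|\lambda_c|$ and \cref{lemma-bijection} that the swept region contains no poles and no branch points, and that the closing arc at infinity decays (which uses $y+y'>0$). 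The deliberate choice of $k'$ makes the swept region pole-free, so no residues are actually picked up; your fallback of absorbing crossed residues into $Q(P)$ would in any case have to account for the complex leaky-mode poles, for which your $o\bigl((\rho/D)^{P}\bigr)$ claim is not justified as stated since $|\Psi(\lambda_c)|$ need not be $\le 1$ without the geometric control the paper imposes on $\Omega_T^+$.
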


The proof will be a special case of a more general convergence result of the
Bessel-type expansions in \cref{thm-Bessel-type-expansion}, to be given in \Cref{sect-3}.

\subsection{Implementation of a FMM framework for sources in layered media}

In the far-field conditions \cref{eq-far-field-condition} of the convergence
results, the polarized distances $D_{ts}^{\ast\star}$ play the role of the
far-field distances as in the free-space cases, which will affect how the ME
based FMM\ will be implemented. \begin{figure}[tbh]
\captionsetup[subfigure]{labelformat=empty} \centering \subfloat
{\includegraphics[scale=0.3]{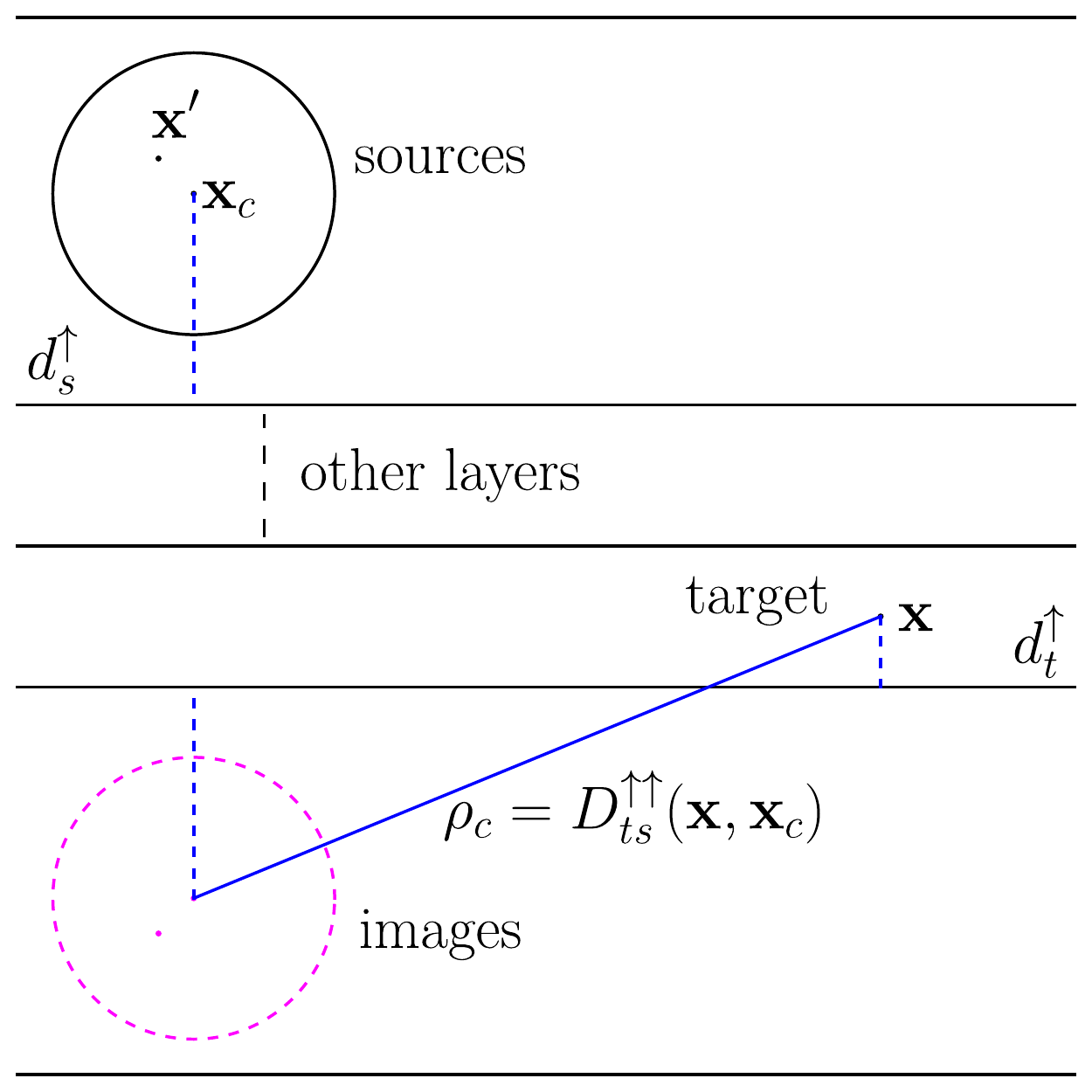}} \hspace{0.06\textwidth
}\subfloat
{\includegraphics[scale=0.3]{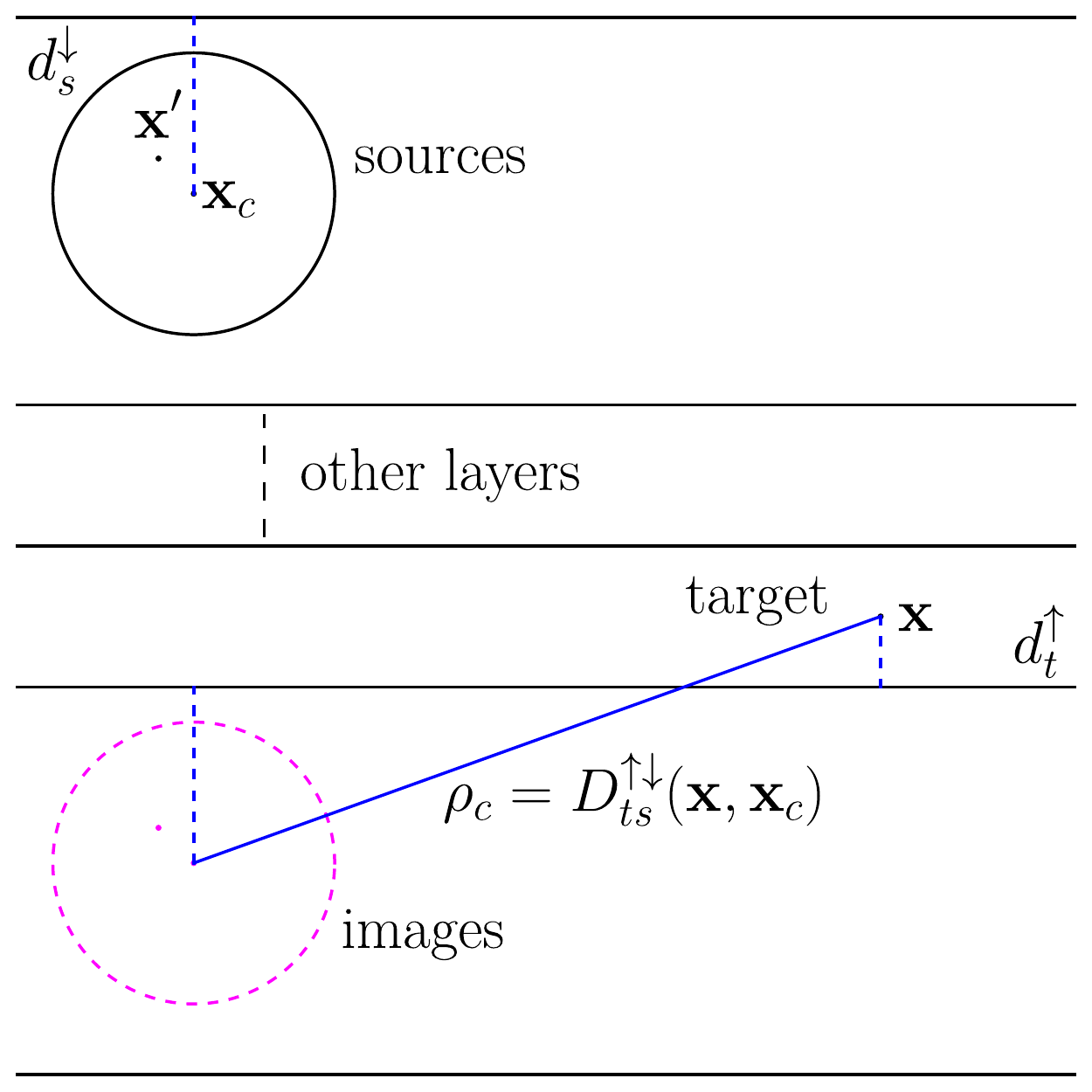}}
\vskip\baselineskip
\subfloat
{\includegraphics[scale=0.3]{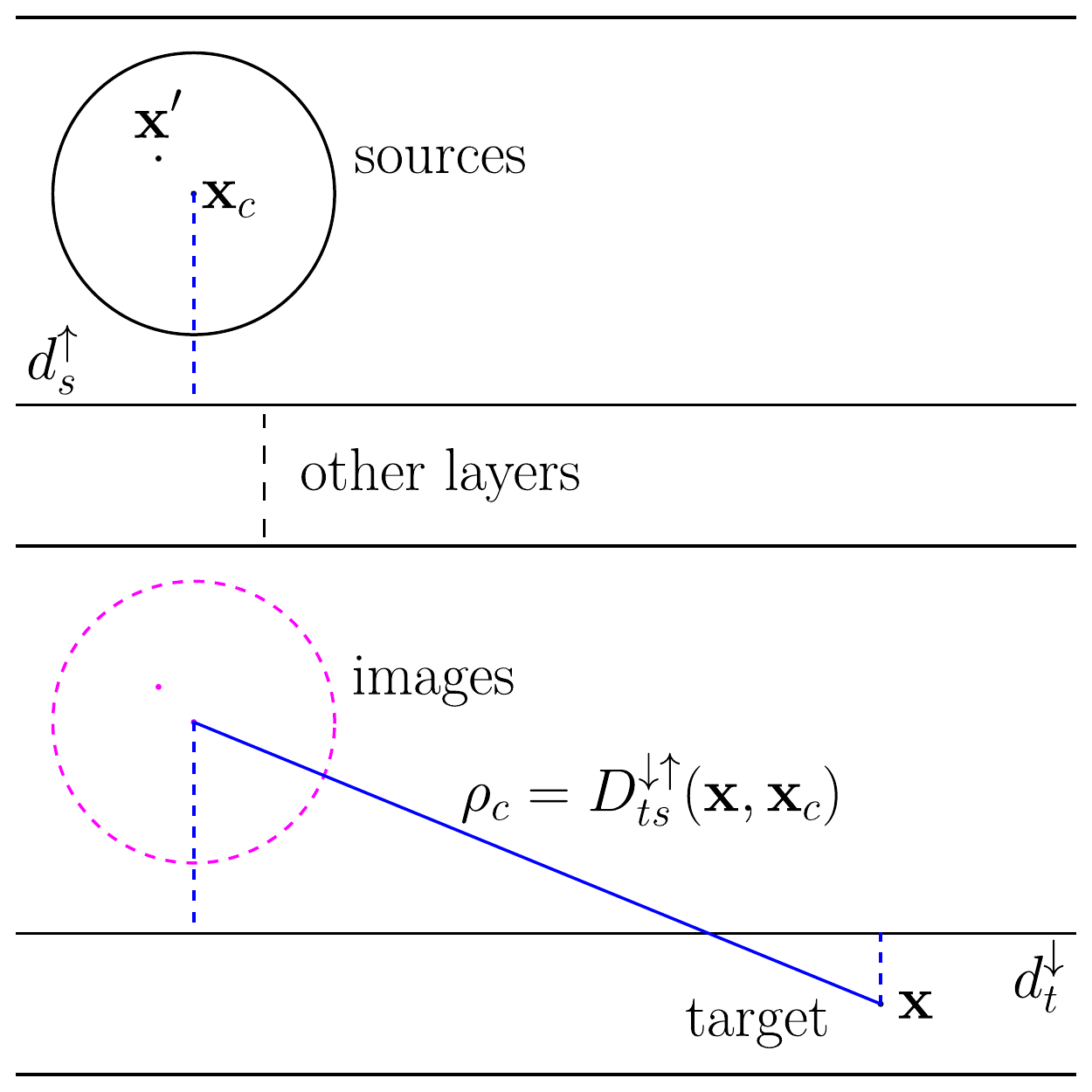}} \hspace{0.06\textwidth
}\subfloat
{\includegraphics[scale=0.3]{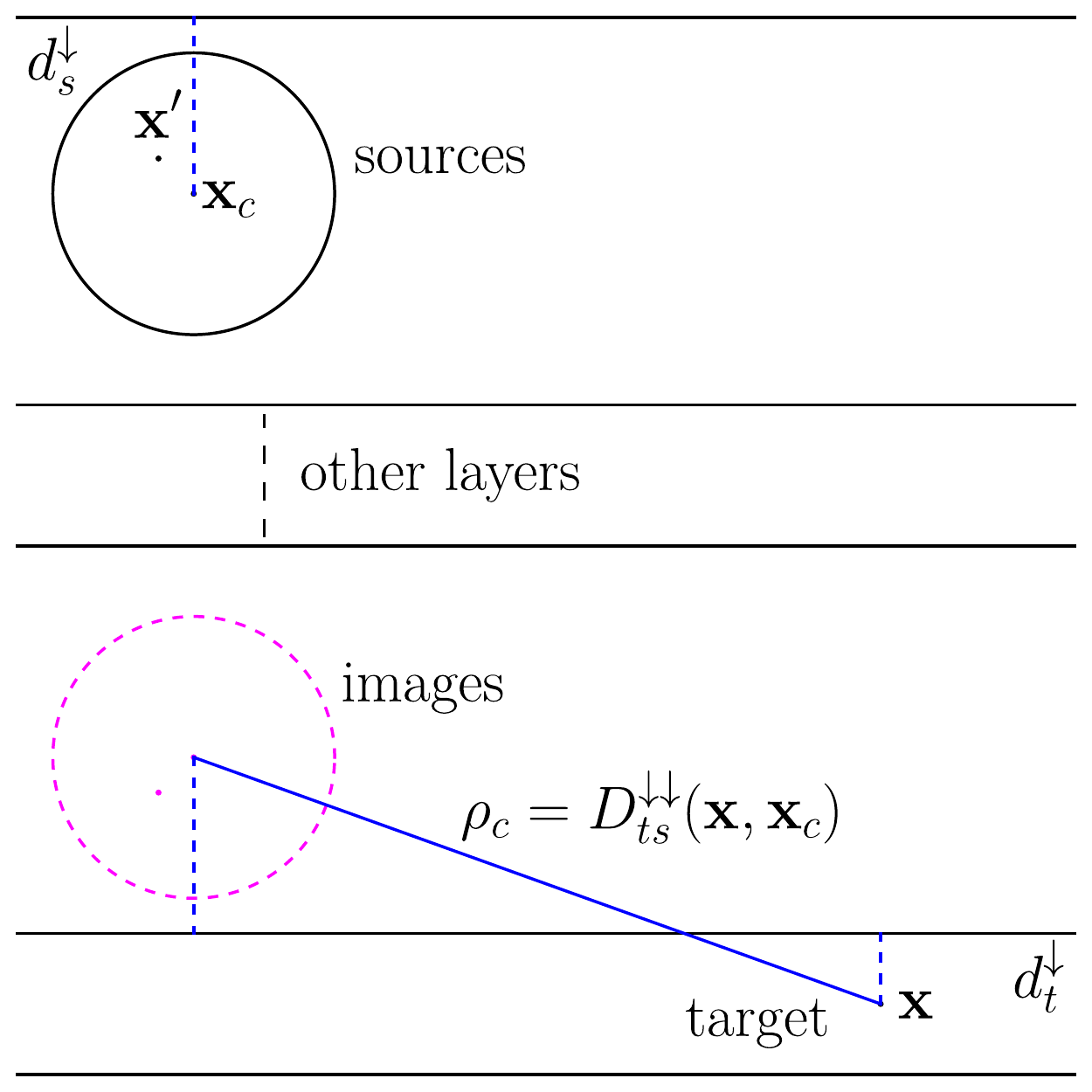}}\caption{The far-field
distance $\rho_{c}$ of the ME in various field propagation directions.}%
\label{fig:picme}%
\end{figure}

Define a bijective linear mapping
\begin{equation}
\mathcal{P}_{ts}^{\ast\star}:\mathbf{x}_{2}=(x_{2},y_{2})\mapsto
\tilde{\mathbf{x}}_{2}=\left(  x_{2},d_{t}^{\ast}-\tau^{\ast}\tau^{\star
}\left(  y_{2}-d_{s}^{\star}\right)  \right)  \label{map-pol-source}%
\end{equation}
provided $\tau^{\star}\left(  y_{2}-d_{s}^{\star}\right)  >0$. It is
straightforward that
\begin{equation}
D_{ts}^{\ast\star}(\mathbf{x}_{1},\mathbf{x}_{2})=\left\Vert \mathbf{x}%
_{1}-\mathcal{P}_{ts}^{\ast\star}(\mathbf{x}_{2})\right\Vert
,\label{eq-Euclidean}%
\end{equation}
here $\Vert\cdot\Vert$ is the Euclidean norm. \Cref{fig:picme} shows how
$\mathcal{P}_{ts}^{\ast\star}$ maps the sources to their \textquotedblleft
polarization images\textquotedblright\ and the far-field distance of the ME
should be $D_{ts}^{\ast\star}(\mathbf{x},\mathbf{x}_{c})$ for various reaction
component of the Green's function.

The FMM for layered media can be set up to evaluate each reaction
component $u_{ts}^{\ast\star}$ as follows: $\mathcal{P}_{ts}^{\ast\star}$ maps
the source layer $s$ to a neighboring layer (below or above) of the target
layer $t$, where all the far-field distances become Euclidean as in
\cref{eq-Euclidean}. Therefore, to
calculate the interaction due to any of the reaction component $u_{ts}%
^{\ast\star}\left(  \mathbf{x},\mathbf{x}^{\prime};\sigma_{ts}^{\ast\star
}\right)  $, we simply move the source charges to the locations of their
corresponding \textquotedblleft polarization images\textquotedblright. An
implementation for Helmholtz equations in 3-D layered media based on this
approach is given in \cite{bo2019sphfmm}.

\section{The convergence estimate on Bessel-type expansions}

\label{sect-3} In this section, we will give convergence estimates on general
Bessel-type expansions, of which \cref{thm-exp-conv} will be a special case.

The Bessel-type expansions are defined as follows. Let $k>0$, $(\rho,\theta)$,
$(\rho^{\prime},\theta^{\prime})$ be the polar coordinates of $\mathbf{x}%
=(x,y)$ and $\mathbf{x}^{\prime}=(x^{\prime},y^{\prime})$, respectively.
Suppose $y>0$, $y+y^{\prime}>0$ and $\rho>\rho^{\prime}\geq0$. For simplicity,
define
\begin{equation}
\Psi(\lambda)\equiv\Psi(\mathbf{x,}\lambda)=e^{-\sqrt{\lambda^{2}-k^{2}}%
y+\i\lambda x},\quad\Psi^{\prime}(\lambda)\equiv\Psi^{\prime}(\mathbf{x}%
^{\prime}\mathbf{,}\lambda)=e^{-\sqrt{\lambda^{2}-k^{2}}y^{\prime}-\i\lambda
x^{\prime}}.
\end{equation}
Then, we claim the \emph{pointwise} \emph{Bessel-type expansion} for a
given $\lambda_{\nu}\in\mathbb{C}$,
\begin{equation}
e^{-\sqrt{\lambda_{\nu}^{2}-k^{2}}(y+y^{\prime})+\i\lambda_{\nu}(x-x^{\prime
})}=\sum_{p=-\infty}^{\infty}J_{p}(k\rho^{\prime})e^{\i p\theta^{\prime}}%
\Psi(\mathbf{x,}\lambda_{\nu})\left(  -\i w(\lambda_{\nu})\right)
^{p}\label{eq-Bessel-type-expansion-discrete}%
\end{equation}
and the \emph{integral} \emph{Bessel-type expansion} for the integration over
$\lambda\in\lbrack a,b],-\infty\leq a<b\leq+\infty,$
\begin{equation}
\int_{a}^{b}e^{-\sqrt{\lambda^{2}-k^{2}}(y+y^{\prime})+\i\lambda(x-x^{\prime
})}f(\lambda)d\lambda=\sum_{p=-\infty}^{\infty}J_{p}(k\rho^{\prime})e^{\i
p\theta^{\prime}}F_{p}(x,y),\label{eq-Bessel-type-expansion}%
\end{equation}
where $f(\lambda)$ is a complex function defined on $(a,b)$ satisfying certain
conditions to be specified later, and $F_{p}(x,y)$ is the expansion function
\[
F_{p}(x,y)=\int_{a}^{b}\Psi(\mathbf{x,}\lambda)\left(  -\i w(\lambda)\right)
^{p}f(\lambda)d\lambda.
\]

\subsection{Convergence of pointwise Bessel-type expansions}

We first present the convergence of \cref{eq-Bessel-type-expansion-discrete}.
\begin{lemma}\label{lemma-pole}Let $c_0 > 1$, $k>0$.
Suppose $(\rho',\theta')$ are the polar coordinates of $(x',y')$, $x\in\mathbb{R}$, $y\in\mathbb{R}^+$ satisfying $\rho = \sqrt{x^2+y^2} > c_0 \rho' \ge 0$ and $x\cdot\Im \lambda_\nu \ge 0$.
Then, the Bessel-type expansion \cref{eq-Bessel-type-expansion-discrete} holds with a truncation error estimate
\begin{equation}
\left|\sum_{|p| \ge P} J_{p}(k\rho') e^{\i p \theta'} \Psi(\mathbf{x,}\lambda_{\nu}) \left(-\i w(\lambda_\nu)\right)^p \right| \le \frac{2c_0}{c_0 -1}\left( \frac{\rho'}{\rho} \right)^P
\end{equation}
for any $P \ge e(|\lambda_\nu|+k/2)\rho$.
\end{lemma}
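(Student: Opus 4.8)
The plan is to derive the pointwise identity \cref{eq-Bessel-type-expansion-discrete} from the Bessel generating function \cref{eq-genBessel}, and then bound the truncated tail by combining the Bessel estimate \cref{eq-BesselBound} with an elementary bound on $w(\lambda_\nu)^{\pm1}$ and a factorial tail estimate; the hypothesis $P\ge e(|\lambda_\nu|+k/2)\rho$ is exactly what turns the tail into a geometric series with ratio at most $\rho'/\rho$. For the identity, write the right-hand side of \cref{eq-Bessel-type-expansion-discrete} as $\Psi(\bd{x},\lambda_\nu)\sum_{p\in\mathbb{Z}}J_p(k\rho')\bigl(e^{\i\theta'}(-\i w(\lambda_\nu))\bigr)^p$ and recognize the series as $g\bigl(k\rho',-\i e^{\i\theta'}w(\lambda_\nu)\bigr)$. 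Using $w(\lambda)+w(\lambda)^{-1}=2\lambda/k$ and $w(\lambda)-w(\lambda)^{-1}=-2\sqrt{\lambda^2-k^2}/k$ (equivalently, specializing \cref{eq-ME-deriv1} to $\tau^\star=1$, $k_s=k$, $(\rho_0,\theta_0)=(\rho',\theta')$), one checks $g\bigl(k\rho',-\i e^{\i\theta'}w(\lambda_\nu)\bigr)=e^{-\sqrt{\lambda_\nu^2-k^2}\,y'-\i\lambda_\nu x'}$; multiplying by $\Psi(\bd{x},\lambda_\nu)=e^{-\sqrt{\lambda_\nu^2-k^2}\,y+\i\lambda_\nu x}$ recovers the left-hand side, the rearrangement being justified a posteriori by the absolute convergence established below.

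Next I would bound each summand. Since $k\rho'\ge0$ is real, \cref{eq-BesselBound} gives $|J_p(k\rho')|\le\frac{1}{|p|!}(k\rho'/2)^{|p|}$; also $|e^{\i p\theta'}|=1$ and $|\Psi(\bd{x},\lambda_\nu)|=e^{-y\Re\sqrt{\lambda_\nu^2-k^2}-x\Im\lambda_\nu}\le1$, since the branch \cref{eq-branch-cut} makes $\Re\sqrt{\lambda_\nu^2-k^2}\ge0$ while $y>0$ and $x\Im\lambda_\nu\ge0$ by hypothesis. For the factor $(-\i w(\lambda_\nu))^p$ with $p$ of either sign, I use $w(\lambda)^{-1}=(\lambda+\sqrt{\lambda^2-k^2})/k$ together with $|\sqrt{\lambda_\nu^2-k^2}|=\sqrt{|\lambda_\nu^2-k^2|}\le\sqrt{|\lambda_\nu|^2+k^2}\le|\lambda_\nu|+k$, so that $|w(\lambda_\nu)^{\pm1}|\le(2|\lambda_\nu|+k)/k$ and hence $|(-\i w(\lambda_\nu))^p|\le\bigl((2|\lambda_\nu|+k)/k\bigr)^{|p|}$. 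Multiplying the two bounds, the factors of $k$ cancel and each summand is at most $(R\rho')^{|p|}/|p|!$ with $R:=|\lambda_\nu|+k/2$.

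Finally, $\bigl|\sum_{|p|\ge P}(\cdots)\bigr|\le 2\sum_{p\ge P}(R\rho')^p/p!$. Using $p!\ge(p/e)^p$ and $p\ge P\ge eR\rho>eR\rho'$, each term is $\le(eR\rho'/P)^p\le(\rho'/\rho)^p$, so the geometric sum is $\le(\rho'/\rho)^P/(1-\rho'/\rho)\le\frac{c_0}{c_0-1}(\rho'/\rho)^P$ because $\rho'/\rho<1/c_0$; doubling gives the asserted bound $\frac{2c_0}{c_0-1}(\rho'/\rho)^P$ (and in particular the absolute convergence used above). I do not anticipate a serious obstacle; the only slightly delicate points are (i) making the estimate on $|w(\lambda_\nu)^{\pm p}|$ uniform in the sign of $p$ and noting that it exactly cancels the $k$ from the Bessel bound, leaving a $k$-free ratio $R\rho'$, and (ii) checking $|\Psi(\bd{x},\lambda_\nu)|\le1$ from the branch-cut convention together with the sign condition $x\Im\lambda_\nu\ge0$ — which is precisely where that hypothesis enters.
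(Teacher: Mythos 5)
Your proposal is correct and follows essentially the same route as the paper's proof: the identity via the Bessel generating function, the termwise bound $\frac{1}{|p|!}(k\rho'/2)^{|p|}\bigl((2|\lambda_\nu|+k)/k\bigr)^{|p|}$, Stirling's inequality $|p|!\ge(|p|/e)^{|p|}$, and a geometric tail sum. You are in fact slightly more careful than the paper at two points it glosses over — the explicit bound on $|w(\lambda_\nu)^{-1}|$ needed for the negative-$p$ terms, and the role of the branch cut and the hypothesis $x\,\Im\lambda_\nu\ge0$ in showing $|\Psi(\mathbf{x},\lambda_\nu)|\le1$.
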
\begin{proof}
The equality of \cref{eq-Bessel-type-expansion-discrete} is given by the Bessel generating function \cref{eq-genBessel}
\begin{align}\label{eq-disc-Bessel-expansion}
\begin{split}
e^{-\sqrt{\lambda_\nu^{2}-k^{2}}(y+y')+\i\lambda_\nu(x-x')}
&= \Psi(\lambda_\nu) g\left( k\rho', -\i e^{\i \theta'} w(\lambda_\nu) \right) \\
&=\text{ }\sum_{p=-\infty}^{\infty} J_{p}(k\rho') e^{\i p \theta'} \mathcal{E}(\lambda_\nu) \left(-\i w(\lambda_\nu)\right)^p.
\end{split}
\end{align}
With the given conditions, $|\exp({-\sqrt{\lambda_\nu^2-k^2}y+\i \lambda_\nu x}) | \le 1$, $|w(\lambda_\nu)| \le (2|\lambda_\nu|+k)/{k}$, hence for each $p$, using \cref{lemma-Jn-bound},
\begin{equation*}
\left|J_{p}(k\rho') e^{\i p \theta'} e^{-\sqrt{\lambda_\nu^{2}-k^{2}}y+\i\lambda_\nu x} \left(-\i w(\lambda_\nu)\right)^p \right| \le \frac{1}{|p|!}\left(\frac{k\rho'}{2}\right)^{|p|} \left(\frac{2|\lambda_\nu|+k}{k}\right)^{|p|}.
\end{equation*}
For $|p| \ge e(|\lambda_\nu|+k/2)\rho$, using Stirling's formula \cite{Stirling1955},
\begin{equation*}
|p|! \ge \left(\frac{|p|}{e}\right)^{|p|} \ge \left( \left( |\lambda_\nu| + \frac{k}{2} \right) \rho \right) ^{|p|},
\end{equation*}
thus,
\begin{equation*}
\left|J_{p}(k\rho') e^{\i p \theta'} e^{-\sqrt{\lambda_\nu^{2}-k^{2}}y+\i\lambda_\nu x} \left(-\i w(\lambda_\nu)\right)^p \right| \le \left(\frac{\rho'}{\rho}\right)^{|p|},
\end{equation*}
which will give the estimate of the truncation error after summing over $|p|\ge P$.
\end{proof}

\subsection{Special cases of integral Bessel-type expansion}

First, we consider \cref{eq-Bessel-type-expansion} when the integral is
defined on a bounded interval $[-k^{\prime},k^{\prime}]$.
\begin{lemma}\label{lemma-bounded-interval}
Let $c_0>1$, $k' \ge k > 0$.
Let $(\rho,\theta)$ and $(\rho',\theta')$ be the polar coordinates of $\bd{x}=(x,y)$ and $\bd{x}'=(x',y')$, respectively.
Suppose $y>0$, $\rho > c_0\rho' \ge 0$, and the function ${f}(\lambda)$ on $[-k',k']$ satisfies $\int_{-k'}^{k'} |{f}(\lambda)|d\lambda = S < +\infty$, then the integral Bessel-type expansion \cref{eq-Bessel-type-expansion} holds on $[-k',k']$ with a truncation error estimate
\begin{equation}
\left|\sum_{|p|\ge P} J_{p}(k\rho') e^{\i p \theta'} F_{p}\right| \le \frac{2c_0 S}{c_0-1}\left( \frac{\rho'}{\rho} \right)^P
\end{equation}
for any $P \ge ek'\rho$.
\end{lemma}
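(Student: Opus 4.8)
The plan is to mimic the proof of \cref{lemma-pole}, upgrading the pointwise estimate to one under the integral sign and then summing a two-sided geometric series. First I would record the underlying identity: for every real $\lambda\in[-k',k']$, since $w(\lambda)\neq0$ (because $k>0$), the Bessel generating function \cref{eq-genBessel} applied with $z=k\rho'$ and $\omega=-\i e^{\i\theta'}w(\lambda)$ gives
\[
e^{-\sqrt{\lambda^{2}-k^{2}}(y+y')+\i\lambda(x-x')}=\Psi(\mathbf{x},\lambda)\,g\left(k\rho',-\i e^{\i\theta'}w(\lambda)\right)=\sum_{p=-\infty}^{\infty}J_{p}(k\rho')e^{\i p\theta'}\,\Psi(\mathbf{x},\lambda)\left(-\i w(\lambda)\right)^{p}.
\]
Multiplying by $f(\lambda)$ and integrating over $[-k',k']$ then yields \cref{eq-Bessel-type-expansion}, once the interchange of the sum and the integral is justified; this will follow from the finiteness of $\sum_{p}\int_{-k'}^{k'}|J_{p}(k\rho')\Psi w^{p}f|\,d\lambda$ established in the termwise bound below.

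The heart of the argument is the termwise bound on $J_{p}(k\rho')e^{\i p\theta'}F_{p}$, and it rests on two elementary estimates. First, $|\Psi(\mathbf{x},\lambda)|\le1$ for real $\lambda$: the branch-cut convention \cref{eq-branch-cut} forces $\Re\sqrt{\lambda^{2}-k^{2}}\ge0$, and $y>0$. Second, on the compact interval $[-k',k']$ with $k'\ge k$ one has $|\lambda|\le k'$ and $|\sqrt{\lambda^{2}-k^{2}}|=\sqrt{|\lambda^{2}-k^{2}|}\le k'$, so that \emph{both}
\[
|w(\lambda)|=\frac{|\lambda-\sqrt{\lambda^{2}-k^{2}}|}{k}\le\frac{2k'}{k},\qquad |w(\lambda)^{-1}|=\frac{|\lambda+\sqrt{\lambda^{2}-k^{2}}|}{k}\le\frac{2k'}{k},
\]
whence $|(-\i w(\lambda))^{p}|\le(2k'/k)^{|p|}$ for \emph{every} $p\in\mathbb{Z}$. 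Combining these with the Bessel bound \cref{eq-BesselBound} (valid since $k\rho'\ge0$) and $\int_{-k'}^{k'}|f|\,d\lambda=S$ gives
\[
\left|J_{p}(k\rho')e^{\i p\theta'}F_{p}\right|\le\frac{1}{|p|!}\left(\frac{k\rho'}{2}\right)^{|p|}\left(\frac{2k'}{k}\right)^{|p|}S=\frac{S}{|p|!}\,(k'\rho')^{|p|},
\]
which in particular makes $\sum_{p}S(k'\rho')^{|p|}/|p|!<\infty$, legitimizing the interchange above.

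Finally, for $|p|\ge P\ge ek'\rho$ I would invoke Stirling's inequality $|p|!\ge(|p|/e)^{|p|}\ge(k'\rho)^{|p|}$ to turn the last display into $|J_{p}(k\rho')e^{\i p\theta'}F_{p}|\le S(\rho'/\rho)^{|p|}$, and then sum the two-sided geometric series using $\rho'/\rho<1/c_{0}$ (from $\rho>c_{0}\rho'$):
\[
\sum_{|p|\ge P}\left|J_{p}(k\rho')e^{\i p\theta'}F_{p}\right|\le2S\sum_{p\ge P}\left(\frac{\rho'}{\rho}\right)^{p}=\frac{2S}{1-\rho'/\rho}\left(\frac{\rho'}{\rho}\right)^{P}\le\frac{2c_{0}S}{c_{0}-1}\left(\frac{\rho'}{\rho}\right)^{P},
\]
which is the claimed bound. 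I do not expect a genuine obstacle: the argument is a parametrized version of \cref{lemma-pole}. The one point that must be handled with care is the simultaneous control of $(-\i w(\lambda))^{p}$ for positive and negative integer $p$ on a compact $\lambda$-interval; this is precisely where the hypothesis $k'\ge k$ is used and why the constant $2k'/k$, hence the threshold $P\ge ek'\rho$, appears.
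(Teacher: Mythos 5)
Your proof is correct and follows essentially the same route as the paper's: bound $|\Psi|\le1$ and $|w(\lambda)|^{\pm1}\le 2k'/k$ on $[-k',k']$, combine with the Bessel bound and Stirling's inequality $|p|!\ge(k'\rho)^{|p|}$ to get the termwise estimate $S(\rho'/\rho)^{|p|}$, and sum the geometric tail. Your explicit summability check justifying the Fubini interchange is a slightly more careful rendering of a step the paper only gestures at.
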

\begin{proof}
When $|\lambda| \le k'$,
\begin{equation*}
\left|e^{-\sqrt{\lambda^{2}-k^{2}}y+\i\lambda x}\right| \le 1,\quad\left|w(\lambda)\right|^{\pm 1}=\left|  \frac{\lambda-\sqrt{\lambda^{2}-k^{2}}}{k}\right|^{\pm 1} \le \frac{2k'}{k},
\end{equation*}
so each $F_p$ is bounded by
\begin{equation*}
|F_p|\le \tilde{F}_p:= \int_{-k'}^{k'} \left| e^{-\sqrt{\lambda^{2}-k^{2}}y+\i\lambda x} \left(-\i w(\lambda)\right)^{p}f(\lambda)\right|d\lambda \le S \left( \frac{2k'}{k} \right)^{|p|}.
\end{equation*}
When $|p| \ge ek'\rho$, using Stirling's formula \cite{Stirling1955}, $|p|! \ge \left({|p|}/{e}\right)^{|p|} \ge (k'\rho)^{|p|}$, so
\begin{equation*}
\tilde{F}_p \le S\left( \frac{2k'}{k} \right)^{|p|} \le S |p|!\left(\frac{k\rho}{2}\right)^{-|p|}.
\end{equation*}
Hence for $|p|\ge ek'\rho$, by \cref{lemma-Jn-bound},
\begin{equation*}
\int_{-k'}^{k'}\left|J_{p}(k\rho') e^{\i p \theta'} e^{-\sqrt{\lambda^{2}-k^{2}}y+\i\lambda x}\left(-\i w(\lambda)\right)^{p}f(\lambda)\right|d\lambda \le\frac{1}{|p|!}\left( \frac{k\rho'}{2} \right)^{|p|}\tilde{F}_p\le S\left( \frac{\rho'}{\rho} \right)^{|p|}.
\end{equation*}
Similar as in \cref{eq-disc-Bessel-expansion} and using the Fubini's theorem,
\begin{equation*}
\begin{aligned}
\int_{-k'}^{k'} e^{-\sqrt{\lambda^{2}-k^{2}}(y+y')+\i
\lambda(x-x')} f(\lambda)d\lambda &= \int_{-k'}^{k'}{\sum_{p=-\infty}^{\infty}J_{p}(k\rho') e^{\i p \theta'} \Psi(\lambda)\left(-\i w(\lambda)\right)^{p}f(\lambda) d\lambda} \\
&= \sum_{p=-\infty}^{\infty} J_{p}(k\rho') e^{\i p \theta'} F_{p}.
\end{aligned}
\end{equation*}
For $P \ge ek'\rho$, the truncation error
\begin{equation*}
\left| \sum_{|p| \ge P} J_{p}(k\rho') e^{\i p \theta'} F_{p} \right| \le \sum_{|p|\ge P}S\left( \frac{\rho'}{\rho} \right)^{|p|} \le \frac{2c_0 S}{c_0-1}\left( \frac{\rho'}{\rho} \right)^P.
\end{equation*}
\end{proof}

A similar result can be derived for a complex path of finite length.
\begin{lemma}\label{lemma-bounded-contour}
Let $c_0 > 1$, $k>0$.
Let $(\rho,\theta)$ and $(\rho',\theta')$ be the polar coordinates of $\bd{x}=(x,y)$ and $\bd{x}'=(x',y')$, respectively.
Suppose $y>0$, $\rho > c_0 \rho' \ge 0$.
Let $\kappa \subset \mathbb{C}$ be a smooth curve with length $|\kappa|<+\infty$.
Suppose $x\cdot \Im \lambda \ge 0$ for any $\lambda \in \kappa$.
Let $f(\lambda)$ be a complex function defined on $\kappa$ satisfying $|f(\lambda)| \le f_0$.
Then,
\begin{equation}\label{eq-kappa-exp}
E_{\kappa}=\int_{\kappa} \Psi(\lambda) \Psi'(\lambda)f(\lambda)d\lambda=
\sum_{p=-\infty}^{\infty} J_{p}(k\rho') e^{\i p \theta'} \int_{\kappa}\Psi(\lambda) \left(-\i w(\lambda)\right)^p f(\lambda) d\lambda,
\end{equation}
with a truncation error estimate
\begin{equation}
\left|\sum_{|p|\ge P} J_{p}(k\rho') e^{\i p \theta'} \int_{\kappa}\Psi(\lambda) \left(-\i w(\lambda)\right)^p f(\lambda) d\lambda \right| \le \frac{2c_0 f_0 |\kappa|}{c_0-1}\left( \frac{\rho'}{\rho} \right)^P
\end{equation}
for any $P \ge e(\lambda_M + k/2)\rho$, where $\lambda_M = \max_{\lambda\in\kappa}|\lambda|$.
\end{lemma}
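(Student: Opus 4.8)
\textbf{Proof proposal for \cref{lemma-bounded-contour}.}

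The plan is to mimic the proof of \cref{lemma-bounded-interval} almost verbatim, replacing the real interval $[-k',k']$ by the smooth curve $\kappa$ and the bound $S=\int|f|$ by $f_0|\kappa|$. First I would establish the two pointwise bounds along $\kappa$ that make everything work: since $x\cdot\Im\lambda\ge 0$ on $\kappa$, the real part of the exponent of $\Psi(\lambda)=e^{-\sqrt{\lambda^2-k^2}y+\i\lambda x}$ is $\le 0$ (the term $-\sqrt{\lambda^2-k^2}\,y$ has nonpositive real part because $\Re\sqrt{\lambda^2-k^2}\ge 0$ under the branch cut \cref{eq-branch-cut}, and $\i\lambda x$ contributes $-x\Im\lambda\le 0$), so $|\Psi(\lambda)|\le 1$; and $|w(\lambda)|^{\pm 1}=|(\lambda-\sqrt{\lambda^2-k^2})/k|^{\pm1}\le(2\lambda_M+k)/k$ uniformly on $\kappa$, since $|\sqrt{\lambda^2-k^2}|\le|\lambda|+k\le\lambda_M+k$.

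Next, for each $p$ I would bound the $p$-th integrand using \cref{lemma-Jn-bound}:
\begin{equation*}
\left|J_p(k\rho')e^{\i p\theta'}\Psi(\lambda)\left(-\i w(\lambda)\right)^p f(\lambda)\right|\le \frac{1}{|p|!}\left(\frac{k\rho'}{2}\right)^{|p|}\left(\frac{2\lambda_M+k}{k}\right)^{|p|}f_0,
\end{equation*}
then apply Stirling's formula in the form $|p|!\ge(|p|/e)^{|p|}\ge((\lambda_M+k/2)\rho)^{|p|}$, valid once $|p|\ge e(\lambda_M+k/2)\rho$, to collapse this to $\le f_0(\rho'/\rho)^{|p|}$. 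Integrating over $\kappa$ gives that the $p$-th term of the series in \cref{eq-kappa-exp} is bounded in absolute value by $f_0|\kappa|(\rho'/\rho)^{|p|}$. This makes the series absolutely and uniformly convergent on $\kappa$, so by Fubini's theorem one may interchange the sum from the Bessel generating function \cref{eq-genBessel} (as in \cref{eq-disc-Bessel-expansion}) with the integral over $\kappa$, which establishes the identity \cref{eq-kappa-exp}. Finally, summing the tail $\sum_{|p|\ge P}f_0|\kappa|(\rho'/\rho)^{|p|}$, a geometric series with ratio $\rho'/\rho<1/c_0$, yields $\frac{2c_0 f_0|\kappa|}{c_0-1}(\rho'/\rho)^P$, which is the claimed truncation error bound.

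There is essentially no hard part here — the lemma is a routine adaptation, and the only point needing care is the sign of $\Re(-\sqrt{\lambda^2-k^2}y+\i\lambda x)$ on $\kappa$, which is precisely where the hypothesis $x\cdot\Im\lambda\ge 0$ and the branch-cut convention \cref{eq-branch-cut} are used; everything downstream is the same Stirling-plus-geometric-series argument already carried out in \cref{lemma-pole} and \cref{lemma-bounded-interval}.
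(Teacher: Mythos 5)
Your proposal is correct and follows essentially the same route as the paper's own proof: the same pointwise bounds $|\Psi(\lambda)|\le 1$ and $|w(\lambda)|^{\pm1}\le(2\lambda_M+k)/k$ on $\kappa$, the same application of \cref{lemma-Jn-bound} and Stirling's formula with $k'$ replaced by $\lambda_M+k/2$, and the same Fubini-plus-geometric-series conclusion. The only cosmetic difference is that the paper explicitly parameterizes $\kappa$ by arc length before invoking Fubini, which you do implicitly.
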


\begin{proof}
Suppose $\kappa$ is parameterized by $\lambda = a(s)+b(s)\i$, $s \in [0,1]$, here $a(s)$ and $b(s)$ are real and smooth functions.
Using the results from the proof of \cref{lemma-pole} and \cref{lemma-bounded-interval}, for $\lambda \in \kappa$,
\begin{equation*}
\left|e^{-\sqrt{\lambda^2-k^2}y+\i\lambda x}\right| \le 1\text{, }|w(\lambda)| \le \frac{2\lambda_M+k}{k},
\end{equation*}
so for each $p$, using \cref{lemma-Jn-bound},
\begin{align*}
&\int_{0}^{1} \left| J_p(k\rho')e^{\i p \theta'} \Psi(\lambda)\left( -\i w(\lambda) \right)^p f(\lambda) \left(a'(s)+b'(s)\i\right) \right| ds \\
\le & \frac{1}{|p|!}\left(\frac{k\rho'}{2}\right)^p \cdot 1 \cdot \left(\frac{2\lambda_M+k}{k}\right)^p \cdot f_0 |\kappa|.
\end{align*}
Hence, using the Bessel generating function \cref{eq-genBessel} and the Fubini's theorem,
\begin{align*}
E_{\kappa}=& \int_{0}^{1} \Psi(\lambda)\Psi'(\lambda) f(\lambda) \left(a'(s)+b'(s)\i\right) ds \\
=&\sum_{p=-\infty}^{\infty}\int_{0}^{1} J_p(k\rho')e^{\i p \theta'} \Psi(\lambda)\left( -\i w(\lambda) \right)^p f(\lambda) \left(a'(s)+b'(s)\i\right) ds \\
=& \sum_{p=-\infty}^{\infty}\int_{\kappa} J_p(k\rho')e^{\i p \theta'} \Psi(\lambda)\left( -\i w(\lambda) \right)^p f(\lambda) d\lambda,
\end{align*}
we get the equality of \cref{eq-kappa-exp}.
The truncation error estimate is similar as in the proof of \cref{lemma-bounded-interval}, except from replacing $k'$ by $\lambda_M + k/2$.
\end{proof}

Next, we consider a special case when $(x,y)=(0,\rho)$ in the Bessel-type
expansion \cref{eq-Bessel-type-expansion} over an infinite interval.
\begin{lemma}\label{lemma-Epp}
Let $c_0>1$, $k' \ge k>0$, $x',y' \in \mathbb{R}$, $\rho >c_0 \rho' = \sqrt{x'^2+y'^2} \ge 0$. Let $C\in\mathbb{R}^+$ and $K$ be a nonnegative integer, and
\begin{align}
E_p^+ = \int_{k'}^{\infty}e^{-\sqrt{\lambda^2-k^2}\rho}\left(-\i w(\lambda)\right)^p f(\lambda) d\lambda,\text{ } p \in\mathbb{Z},
\end{align}
here $f(\lambda)$ satisfies $|f(\lambda)| \le C\lambda^{K}$ for $\lambda \in[k',\infty)$.
Then for any sufficiently large $|p|$ such that $|p| \ge (k\rho)^2/4 +1 -K$ we have the estimate
\begin{equation}
\left|E_p^{+}\right| \le \int_{k'}^{\infty}e^{-\sqrt{\lambda^2-k^2}\rho} w(\lambda)^p |f(\lambda)| d\lambda \le 3C\left(|p|+K\right)! \left(\frac{2}{\rho} \right)^{K+1} \left(\frac{k\rho}{2}\right)^{-|p|}.
\end{equation}
In addition, the Bessel-type expansion \cref{eq-Bessel-type-expansion} holds with $(x,y)=(0,\rho)$ on the interval $(k',\infty)$, with a truncation error estimate
\begin{equation}
\left|\int_{k'}^{\infty} e^{-\sqrt{\lambda^{2}-k^{2}}(\rho+y')+\i
\lambda(-x')} f(\lambda)d\lambda - \sum_{|p| < P} J_{p}(k\rho') e^{\i p \theta'} E_{p}^+ \right| \le c(P) \left(\frac{\rho'}{\rho} \right)^P
\end{equation}
for any $P \ge (k\rho)^2/4 +1-K$, where
\begin{equation}
c(P)=6C(K+1)!\left( \frac{2c_0}{\rho (c_0-1)} \right)^{K+1}(P+K)^K.
\end{equation}
\end{lemma}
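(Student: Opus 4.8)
The plan is to isolate the bound on $|E_p^+|$ (the second displayed inequality) as the only substantive point: the first inequality is immediate, since on $[k',\infty)$ the quantity $w(\lambda)=(\lambda-\sqrt{\lambda^2-k^2})/k$ is real and positive, so $|(-\i w(\lambda))^p|=w(\lambda)^p$ and the triangle inequality for integrals applies; and once the $|E_p^+|$ bound is in hand, the integral Bessel-type expansion and its truncation error follow the template of the proof of \cref{lemma-bounded-interval}.

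To bound $\int_{k'}^\infty e^{-\sqrt{\lambda^2-k^2}\rho}\,w(\lambda)^p\,|f(\lambda)|\,d\lambda$, I would substitute $\lambda=k\cosh u$ and then $\eta=\tfrac{k\rho}{2}e^{u}$, under which $\sqrt{\lambda^2-k^2}\,\rho=\eta-\tfrac{(k\rho)^2}{4\eta}$, $w(\lambda)=\tfrac{k\rho}{2\eta}$, $d\lambda=\tfrac1\rho\bigl(1-\tfrac{(k\rho)^2}{4\eta^2}\bigr)d\eta\le\tfrac{d\eta}{\rho}$, and the lower limit becomes $\eta'=\tfrac{k\rho}{2}e^{u'}\ge\tfrac{k\rho}{2}$ with $k'=k\cosh u'$; combined with $|f(\lambda)|\le C\lambda^K$ and $\lambda=\tfrac{\eta}{\rho}+\tfrac{k^2\rho}{4\eta}\le\tfrac{2\eta}{\rho}$ (valid because $\eta\ge\tfrac{k\rho}{2}$), this yields
\[
\int_{k'}^\infty e^{-\sqrt{\lambda^2-k^2}\rho}\,w(\lambda)^p\,|f(\lambda)|\,d\lambda
\;\le\;\frac{C}{2}\Bigl(\frac2\rho\Bigr)^{K+1}\Bigl(\frac{k\rho}{2}\Bigr)^{p}\int_{\eta'}^\infty e^{-\eta}\,e^{(k\rho)^2/(4\eta)}\,\eta^{K-p}\,d\eta .
\]
So the whole matter reduces to estimating this $\eta$-integral. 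In the binding case $p=-|p|<0$, the prefactor $(k\rho/2)^p$ is exactly the $(k\rho/2)^{-|p|}$ of the claim and $\eta^{K-p}=\eta^{K+|p|}$, and one wants to compare with $\int_0^\infty e^{-\eta}\eta^{K+|p|}\,d\eta=(K+|p|)!$; for $p\ge0$ the $\eta$-integral is easily bounded and the real issue is instead that the prefactor $(k\rho/2)^p$ can be large.

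The spurious factor $e^{(k\rho)^2/(4\eta)}\ge1$ is precisely what the hypothesis $|p|\ge(k\rho)^2/4+1-K$, i.e. $|p|+K>(k\rho/2)^2$, is designed to control: by a Laplace/saddle-point argument the integrand $e^{-\eta}\eta^{K+|p|}$ concentrates around $\eta_\ast\approx K+|p|$, where $(k\rho)^2/(4\eta_\ast)=(k\rho)^2/(4(K+|p|))<1$, so $e^{(k\rho)^2/(4\eta)}<e$ on that bulk, giving $\int_{\eta'}^\infty e^{-\eta}e^{(k\rho)^2/(4\eta)}\eta^{K+|p|}\,d\eta\le(e+o(1))(K+|p|)!$ — the stated constant $3$ comfortably accommodating the factor $e$ and the sub-leading terms, with a separate, purely bounded estimate when $k\rho$ is small. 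For $p\ge0$, $|E_p^+|$ is uniformly bounded, while the claimed right-hand side $3C(|p|+K)!(2/\rho)^{K+1}(k\rho/2)^{-|p|}$ is increasing in $|p|$ on this range (its consecutive ratio is $2(|p|+K+1)/(k\rho)\ge1$) and, via Stirling's formula \cite{Stirling1955}, $(|p|+K)!\ge((|p|+K)/e)^{|p|+K}\ge((k\rho)^2/(4e))^{|p|+K}$ forces it above that bound. This Laplace estimate together with the casework on the sign of $p$ and the pinning-down of the constants is the step I expect to be the most delicate; everything else is routine.

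Granting the $|E_p^+|$ bound, I would apply the Bessel generating function \cref{eq-genBessel} to the integrand of $\int_{k'}^\infty e^{-\sqrt{\lambda^2-k^2}(\rho+y')-\i\lambda x'}f(\lambda)\,d\lambda$ exactly as in \cref{eq-Bessel-type-expansion-discrete} with $(x,y)=(0,\rho)$, so the integrand equals $e^{-\sqrt{\lambda^2-k^2}\rho}f(\lambda)\sum_{p}J_p(k\rho')e^{\i p\theta'}(-\i w(\lambda))^p$; the $|E_p^+|$ bound shows $\sum_p|J_p(k\rho')|\,|E_p^+|<\infty$ whenever $P\ge(k\rho)^2/4+1-K$ (so the bound applies to every term with $|p|\ge P$), whence Fubini's theorem justifies the interchange and produces the expansion $\sum_p J_p(k\rho')e^{\i p\theta'}E_p^+$. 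For the truncation error I would combine $|J_p(k\rho')|\le(k\rho'/2)^{|p|}/|p|!$ (from \cref{eq-BesselBound}, with no exponential factor since the argument is real) with the $|E_p^+|$ bound and $\tfrac{(|p|+K)!}{|p|!}=\prod_{j=1}^K(|p|+j)\le(|p|+K)^K$ to get $|J_p(k\rho')E_p^+|\le 3C(2/\rho)^{K+1}(|p|+K)^K(\rho'/\rho)^{|p|}$ for $|p|\ge P$; summing the tail with $\rho'/\rho<1/c_0$ — using $(P+K+j)^K\le(P+K)^K(1+j)^K$ and $\sum_{j\ge0}(1+j)^Kx^j\le(K+1)!\,(1-x)^{-(K+1)}$ — then bounds it by $c(P)(\rho'/\rho)^P$ with $c(P)$ of the stated form, a polynomial of degree $K$ in $P$.
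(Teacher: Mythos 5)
Your reduction of the key estimate is, up to relabeling, the same as the paper's: the substitution $\lambda=k\cosh u$, $\eta=\tfrac{k\rho}{2}e^{u}$ is exactly the paper's $v=(\lambda+\sqrt{\lambda^{2}-k^{2}})/k$ followed by the rescaling $\eta=\tfrac{k\rho}{2}v$, and both routes land on having to show, for $M=|p|+K\ge (k\rho)^{2}/4+1$,
\begin{equation*}
\int_{k\rho/2}^{\infty}e^{-\eta}e^{(k\rho)^{2}/(4\eta)}\,\eta^{M}\,d\eta\ \le\ 3\,M! .
\end{equation*}
The genuine gap is that you never prove this. Your Laplace/concentration heuristic (``the integrand concentrates near $\eta_{*}\approx M$, where the correction factor is $<e$, giving $(e+o(1))M!$'') is an asymptotic statement as $M\to\infty$, whereas the lemma must hold uniformly for every admissible $M$, including $M$ right at the threshold $(k\rho)^{2}/4+1$ with $k\rho$ arbitrary; moreover, near the lower endpoint $\eta=k\rho/2$ the correction factor equals $e^{k\rho/2}$, which is exponentially large in $k\rho$, so dismissing that region requires an actual estimate rather than ``the constant $3$ comfortably accommodates the sub-leading terms.'' The paper closes precisely this step with a Taylor-with-Lagrange-remainder device, $e^{k\rho/(2v)}\le\sum_{j=0}^{M-1}\frac{1}{j!}\bigl(\tfrac{k\rho}{2v}\bigr)^{j}+\frac{1}{M!}\bigl(\tfrac{k\rho}{2v}\bigr)^{M}e^{k\rho/(2v)}$, integrates term by term against $e^{-k\rho v/2}v^{M}$, and shows the resulting coefficients $c_{j}=\frac{(M-j)!}{M!\,j!}\bigl(\tfrac{k\rho}{2}\bigr)^{2j}$ sum to at most $3$ exactly because $M\ge(k\rho)^{2}/4+1$. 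You need this, or an equally explicit uniform argument, for the $|E_{p}^{+}|$ bound to stand.

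A secondary problem is your casework on the sign of $p$. It is unnecessary --- on $[k,\infty)$ one has $0\le\lambda-\sqrt{\lambda^{2}-k^{2}}\le k\le\lambda+\sqrt{\lambda^{2}-k^{2}}$, hence $w(\lambda)^{p}\le\bigl((\lambda+\sqrt{\lambda^{2}-k^{2}})/k\bigr)^{|p|}$ for both signs of $p$, which is how the paper treats all $p$ at once --- and, as written, it does not close: for $p\ge0$ you bound $|E_{p}^{+}|$ by a fixed constant and then try to show the claimed right-hand side exceeds that constant by combining monotonicity in $|p|$ with the Stirling lower bound $(|p|+K)!\ge\bigl((k\rho)^{2}/(4e)\bigr)^{|p|+K}$; but that lower bound multiplied by $(k\rho/2)^{-|p|}$ behaves like $\bigl(k\rho/(2e)\bigr)^{|p|}$, which tends to $0$ when $k\rho<2e$, so the chain of inequalities you cite cannot force the right-hand side above a fixed positive constant. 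The rest of your argument --- the triangle-inequality step, the Fubini interchange justified by absolute summability, and the tail summation producing $c(P)=6C(K+1)!\bigl(2c_{0}/(\rho(c_{0}-1))\bigr)^{K+1}(P+K)^{K}$ --- matches the paper and is fine once the $|E_{p}^{+}|$ bound is actually established.
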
\begin{proof}\label{proof-lemma-Epp}
Notice that for $\lambda \ge k$ we have $\sqrt{\lambda^2-k^2} \le \lambda$ and $0 \le \lambda - \sqrt{\lambda^2-k^2} \le k \le \lambda \le \lambda + \sqrt{\lambda^2-k^2}$, so each $|E_p^+| \le C k^{K+1} I_p$, where
\begin{equation}
I_p=\int_{k}^{\infty} \frac{e^{-\sqrt{\lambda^2-k^2}\rho}}{\sqrt{\lambda^2-k^2}} \left(\frac{\lambda+\sqrt{\lambda^2-k^2}}{k}\right)^{M+1}d\lambda,
\end{equation}
here $M=|p|+K$.
With the substitution $v=(\lambda+\sqrt{\lambda^2-k^2})/k$,
\begin{align*}
I_p &= \int_{1}^{\infty} e^{\frac{k\rho}{2}(-v+v^{-1})}v^M dv \\
&\le \int_{1}^{\infty} e^{\frac{k\rho}{2}(-v)} \left( \sum_{j=0}^{M-1}\frac{1}{j!}\left(\frac{k\rho}{2}v^{-1}\right)^j + \frac{1}{M!}\left(\frac{k\rho}{2}v^{-1}\right)^M e^{\frac{k\rho}{2}v^{-1}} \right) v^M dv \\
&\le \sum_{j=0}^{M-1}\frac{1}{j!}\left(\frac{k\rho}{2}\right)^j \int_0^{\infty} e^{\frac{k\rho}{2}(-v)}v^{M-j}dv
+ \frac{1}{M!}\left(\frac{k\rho}{2}\right)^M\int_1^{\infty} e^{\frac{k\rho}{2}(-v+1)} dv \\
&= \sum_{j=0}^{M-1}\frac{(M-j)!}{j!}\left(\frac{k\rho}{2}\right)^{2j-M-1}+\frac{1}{M!}\left(\frac{k\rho}{2}\right)^{M-1} \\
&= M! \left(\frac{k\rho}{2} \right)^{-M-1} \sum_{j=0}^M c_j
\end{align*}
where
\begin{equation}
c_j = \frac{(M-j)!}{M!j!}\left(\frac{k\rho}{2}\right)^{2j}, \text{ } j = 0,\cdots,M.
\end{equation}
One can quickly verify $c_0 = 1$, $c_1 = \left({k\rho}/{2}\right)^{2} / M \le {(M-1)}/{M}$.
For $1 \le j \le M-2$, we have $c_{j+1} / c_j  = (k\rho)^2 / 4(j+1)(M-j) \le 1 / 2$.
For $c_M$ we have $c_M / c_{M-1} = (k\rho)^2/4M \le 1 $.
In sum, $\sum_{j=0}^{M} c_j \le c_0 + 2c_1 \le 1+2(M-1)/M \le 3$, so
\begin{align*}
|E_p^+| \le Ck^{K+1} I_p \le 3Ck^{K+1} M!\left(\frac{k\rho}{2} \right)^{-M-1} \le 3C\left(|p|+K\right)! \left(\frac{2}{\rho} \right)^{K+1} \left(\frac{k\rho}{2}\right)^{-|p|}.
\end{align*}
For the expansion \cref{eq-Bessel-type-expansion} with $(x,y)=(0,\rho)$ on $[k',\infty)$, by \cref{lemma-Jn-bound}, for each $p$,
\begin{equation*}
\begin{aligned}
&\int_{k'}^{\infty}\left|J_{p}(k\rho') e^{\i p \theta'} e^{-\sqrt{\lambda^2-k^2}\rho}\left(-\i w(\lambda)\right)^{p}f(\lambda)\right| d\lambda\\
\le & \text{ } \frac{1}{|p|!}\left(\frac{k\rho'}{2} \right)^{|p|}\cdot 3C\left(|p|+K\right)! \left(\frac{2}{\rho} \right)^{K+1} \left(\frac{k\rho}{2}\right)^{-|p|} \\
=& \text{ }3C\frac{(|p|+K)!}{|p|!}\left(\frac{2}{\rho} \right)^{K+1} \left(\frac{\rho'}{\rho}\right)^{|p|}.
\end{aligned}
\end{equation*}
Similar as in \cref{eq-disc-Bessel-expansion} and using the Fubini's theorem,
\begin{align*}
&\int_{k'}^{\infty} e^{-\sqrt{\lambda^{2}-k^{2}}(\rho+y')+\i
\lambda(-x')} f(\lambda)d\lambda \\
=& \int_{k'}^{\infty}{\sum_{p=-\infty}^{\infty}J_{p}(k\rho') e^{\i p \theta'} e^{-\sqrt{\lambda^2-k^2}\rho}\left(-\i w(\lambda)\right)^{p}f(\lambda) d\lambda} \\
=& \sum_{p=-\infty}^{\infty} J_{p}(k\rho') e^{\i p \theta'} E_{p}^+.
\end{align*}
When $P \ge (k\rho)^2/4+1-K$, the $P$-term truncation has the truncation error
\begin{equation*}
\left| \sum_{|p| \ge P} J_{p}(k\rho') e^{\i p \theta'} E_{p}^+ \right| \le \sum_{|p| \ge P}3C\frac{(|p|+K)!}{|p|!}\left(\frac{2}{\rho} \right)^{K+1} \left(\frac{\rho'}{\rho}\right)^{|p|} \le c(P)\left(\frac{\rho'}{\rho}\right)^{P}.
\end{equation*}
\end{proof}\begin{remark}
The bound of $|E_p^+|$ in \cref{lemma-Epp} is chosen as an analog of the asymptotic behavior of $H_n^{(1)}(x) \sim (n-1)! (x/2)^n/(\i\pi)$ for $x>0$ as $n\to\infty$ \cite[(9.3.1)]{abramowitz1966handbook}.
\end{remark}

\subsection{Convergence of general integral Bessel-type expansion}

In order to obtain the convergence estimate of the integral Bessel-type
expansion \cref{eq-Bessel-type-expansion} on an infinite interval, we will
follow two steps. First, the Cagniard--de Hoop transform \cite{chew} will be
used to convert the general $(x,y)$ case to the $(\rho,0)$ case as discussed
in \cref{lemma-Epp}, namely, the complex factor $e^{-\sqrt{\lambda^{2}-k^{2}%
}y+\i \lambda x}$ in \cref{eq-Bessel-type-expansion} is converted to $e^{-\sqrt
{\lambda^{2}-k^{2}}\rho}$; Second, we deform the new complex contour of
integration to the real axis, see the illustration in
\cref{fig:picchangingcontour}.

\subsubsection{The Cagniard--de Hoop transform}

Given $x$, $y$ and $T \in\mathbb{R}^{+}$ satisfying $x<Ty$. Let $(\rho
,\theta)$ be the polar coordinates of $(x,y)$. Let $\beta=\frac{\pi}{2}%
-\theta\in(0,\frac{\pi}{2})$, then $y+x\i= \rho e^{\i\beta}$.

Define the open set
\begin{equation}
\Omega=\left\{  z\in\mathbb{C}:\Re z>0,z\notin(0,k]\right\}  .
\end{equation}
Define the holomorphic Cagniard--de Hoop mapping $\phi:\Omega\rightarrow
\mathbb{C}$ by
\begin{equation}
\phi(z)=z\cos\beta+\i \sqrt{z^{2}-k^{2}}\sin\beta.
\end{equation}
For any positive real $w\in(k,+\infty)$, one can easily verify that an inverse
of $\phi$ at $w$ is found in the fourth quadrant as $\phi^{-1}(w)=w\cos
\beta-\i \sqrt{w^{2}-k^{2}}\sin\beta$. Let
\begin{equation}
\gamma^{+}=\phi\left(  (k,+\infty)\right)  \text{, }\gamma^{-}=\phi
^{-1}\left(  (k,+\infty)\right)  ,\label{def-gamma}%
\end{equation}
then, $\gamma^{+}\cup\gamma^{-}\cup\{k\cos\beta\}$ is the right branch of the
hyperbola
\begin{equation}
\Gamma=\left\{  a+b\i :a,b\in\mathbb{R},\frac{a}{\cos\beta}=\sqrt
{\frac{b^{2}}{\sin^{2}\beta}+k^{2}}\right\}
\end{equation}
with vertex $k\cos\beta$ on the real axis.
$\Gamma$ is known as the Cagniard--de Hoop contour.
The lines passing the origin with slope $\pm
\tan\beta$ are the asymptotes of $\gamma^\pm$.

Define regions to the right of $\Gamma$ in
the first and the fourth quadrant, respectively, by
\begin{equation}
D^{\pm}=\{z+t:z\in\gamma^{\pm},t\in\mathbb{R}^{+}\}.\label{def-Dpm}%
\end{equation}
$D^\pm$ are isomorphic as the following lemma shows.
\begin{lemma}\label{lemma-bijection}
$\phi|_{D^-}$ is a bijection from $D^-$ to $D^+$ with inverse $\phi^{-1}|_{D^+}$ given by
\begin{equation}
\phi^{-1}(w) = w\cos\beta-\i\sqrt{w^2-k^2}\sin\beta.
\end{equation}
\end{lemma}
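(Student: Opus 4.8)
The plan is to exhibit the explicit map $\psi(w):=w\cos\beta-\i\sqrt{w^2-k^2}\sin\beta$ as a genuine two-sided inverse of $\phi$ between $D^-$ and $D^+$, by checking the inverse relation on the real ray $(k,+\infty)$ and then propagating it to the regions. First I would record that $\psi$ is holomorphic on $\Omega$: the argument of $w^2-k^2$ lands on the branch cut $(-\infty,0]$ of $\sqrt{\cdot}$ exactly when $w$ is real with $w\in(0,k]$ or $w$ is purely imaginary, both excluded by $\Re w>0$, $w\notin(0,k]$. I would also note $D^-\subset\Omega$ and $D^+\subset\Omega$: since for $\lambda>k$ the point $\phi(\lambda)$ has strictly positive imaginary part $\sqrt{\lambda^2-k^2}\sin\beta$ and real part $\lambda\cos\beta>k\cos\beta>0$, the curve $\gamma^+$ lies in the open first quadrant, $\gamma^-$ in the open fourth quadrant, and translating rightward keeps the sign of the imaginary part and the positivity of the real part; hence $D^\pm$ avoid both the imaginary axis and the slit $(0,k]$.

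Next I would set up the algebraic backbone. Squaring $w-z\cos\beta=\i\sqrt{z^2-k^2}\sin\beta$ (with $w=\phi(z)$) yields the symmetric quadratic relation $w^2-2zw\cos\beta+z^2-k^2\sin^2\beta=0$, which rearranges to $(z-w\cos\beta)^2=-(w^2-k^2)\sin^2\beta$; thus the inverse value $z$ is forced to be one of $\psi(w)$ or $\psi^\ast(w):=w\cos\beta+\i\sqrt{w^2-k^2}\sin\beta$. To pin the sign down I would do the direct check already signalled in the text: for real $w>k$ one computes $\psi(w)^2-k^2=(\sqrt{w^2-k^2}\cos\beta-\i w\sin\beta)^2$, and the base lies in the open right half plane, so the principal branch gives $\sqrt{\psi(w)^2-k^2}=\sqrt{w^2-k^2}\cos\beta-\i w\sin\beta$ and hence $\phi(\psi(w))=w$; the mirror computation gives $\psi(\phi(\lambda))=\lambda$ for $\lambda>k$. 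Consequently $\psi((k,+\infty))=\gamma^-$, $\phi(\gamma^-)=(k,+\infty)$, $\phi((k,+\infty))=\gamma^+$, $\psi(\gamma^+)=(k,+\infty)$, so $\psi$ carries $\partial D^+=\gamma^+\cup(k,+\infty)$ onto $\partial D^-=(k,+\infty)\cup\gamma^-$ and $\phi$ does the reverse.

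To pass from the boundary curves to the interiors I would use the substitution $z=k\cosh\xi$, under which $z^2-k^2=k^2\sinh^2\xi$ and, on the sheet $\Re\xi>0$ with $\Im\xi\in(-\tfrac\pi2,0)$ (so $\Re\sinh\xi>0$, matching the principal square root), $\sqrt{z^2-k^2}=k\sinh\xi$ and $\phi(k\cosh\xi)=k\bigl(\cosh\xi\cos\beta+\i\sinh\xi\sin\beta\bigr)=k\cosh(\xi+\i\beta)$. Writing $w=k\cosh\eta$ similarly, the equation $w=\phi(z)$ becomes $\eta=\xi+\i\beta$ (the even/periodic ambiguity of $\cosh$ being resolved by staying on the chosen sheet), i.e. a pure imaginary translation — manifestly a bijection. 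It then remains to identify regions: $(k,+\infty)$ and $\gamma^-$ correspond to the horizontal rays $\{\Im\xi=0,\Re\xi>0\}$ and $\{\Im\xi=-\beta,\Re\xi>0\}$, and a short check of the boundary images shows $D^-$ corresponds to the open half-strip $\{\Re\xi>0,\,-\beta<\Im\xi<0\}$ and $D^+$ to $\{\Re\eta>0,\,0<\Im\eta<\beta\}$; the translation $\xi\mapsto\xi+\i\beta$ maps the first bijectively onto the second, and unwinding it reproduces exactly $\psi$ on $D^+$. (Alternatively, one can stay in the $z$-plane: $\phi\circ\psi-\mathrm{id}$ is holomorphic on the connected open set $\Omega\cap\psi^{-1}(\Omega)\supseteq(k,+\infty)\cup D^+$ and vanishes on $(k,+\infty)$, hence on $D^+$; and $\psi\circ\phi=\mathrm{id}$ on $D^-$ by continuation from $\gamma^-$, with the inclusions $\psi(D^+)\subseteq D^-$, $\phi(D^-)\subseteq D^+$ coming from the open-mapping property together with the boundary correspondence above.)

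The main obstacle is bookkeeping the branch cuts and sheets: verifying that the principal square roots appearing in $\phi$ and in $\psi$ are precisely the ones produced by the algebra (so that the inverse is $\psi$ and not $\psi^\ast$), and correctly matching the regions $D^\pm$ — which are \emph{defined} by the ``translate-rightward'' construction in the $z$- and $w$-planes — with the clean half-strips in the $\cosh$-coordinate (or, on the analytic-continuation route, checking that $\Omega\cap\psi^{-1}(\Omega)$ is connected and actually contains $D^+$, including the behaviour near the vertex $k\cos\beta$ and at infinity). Once these domain identifications are secured, the bijectivity of $\phi|_{D^-}:D^-\to D^+$ and the stated formula for the inverse follow at once.
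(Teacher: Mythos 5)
Your proposal is correct, but it reaches the conclusion by a genuinely different mechanism than the paper. The paper's proof (Appendix A) first establishes two auxiliary facts --- \cref{lemma-2quadI}, that $\phi(D^-)$ lies in the open first quadrant, by an explicit computation with the quantities $Q_1,Q_2$, and \cref{lemma-noGamma}, that $\phi(D^-)$ never meets $\gamma^+$, via the symmetric quadratic $\lambda^{2}-2\lambda w\cos\beta+w^{2}=k^{2}\sin^{2}\beta$ --- and then deduces $\phi(D^{-})\subset D^{+}$ by a continuity/crossing argument (sliding $z$ horizontally until its trajectory would have to cross $\Gamma$), finally pinning down the inverse by selecting the correct root $\lambda_{\pm}$ of that same quadratic. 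You share with the paper the key observation that $z$ and $w=\phi(z)$ satisfy this quadratic, so the inverse value is forced to be $\psi(w)$ or $\psi^{\ast}(w)$; but where the paper resolves the sign pointwise through \cref{lemma-2quadI} and handles the domain inclusion topologically, you resolve it once on the real ray $(k,+\infty)$ by a direct branch computation and then globalize, either through the conformal coordinate $z=k\cosh\xi$ (under which $\phi$ becomes the translation $\xi\mapsto\xi+\i\beta$ between half-strips) or by the identity theorem. The $\cosh$ route is arguably cleaner and makes injectivity and surjectivity manifest at one stroke; its cost is exactly the bookkeeping you flag, namely identifying $D^{\pm}$ with the half-strips $\{\Re\xi>0,\,-\beta<\Im\xi<0\}$ and $\{\Re\eta>0,\,0<\Im\eta<\beta\}$ and checking that $\sqrt{z^{2}-k^{2}}=k\sinh\xi$ on the chosen sheet. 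Both checks do go through: $\Re\sinh\xi=\sinh\sigma\cos\tau>0$ on the strip, so the principal branch is the right one, and the level curves $\Im\xi=\tau$ sweep out $D^{-}$ as a family of confocal hyperbolas interpolating between $(k,+\infty)$ and $\gamma^{-}$, which matches the ``rightward translate'' definition \cref{def-Dpm}. In exchange, your argument replaces the paper's somewhat informal ``move $z$ to the left until the image crosses $\Gamma$'' step with something more rigid, so I would regard it as a valid and in some respects tighter alternative proof.
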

\begin{proof}
See \cref{proof-lemma-bijection}.
\end{proof}

\begin{figure}[tbh]
\centering
\includegraphics[scale=0.7]{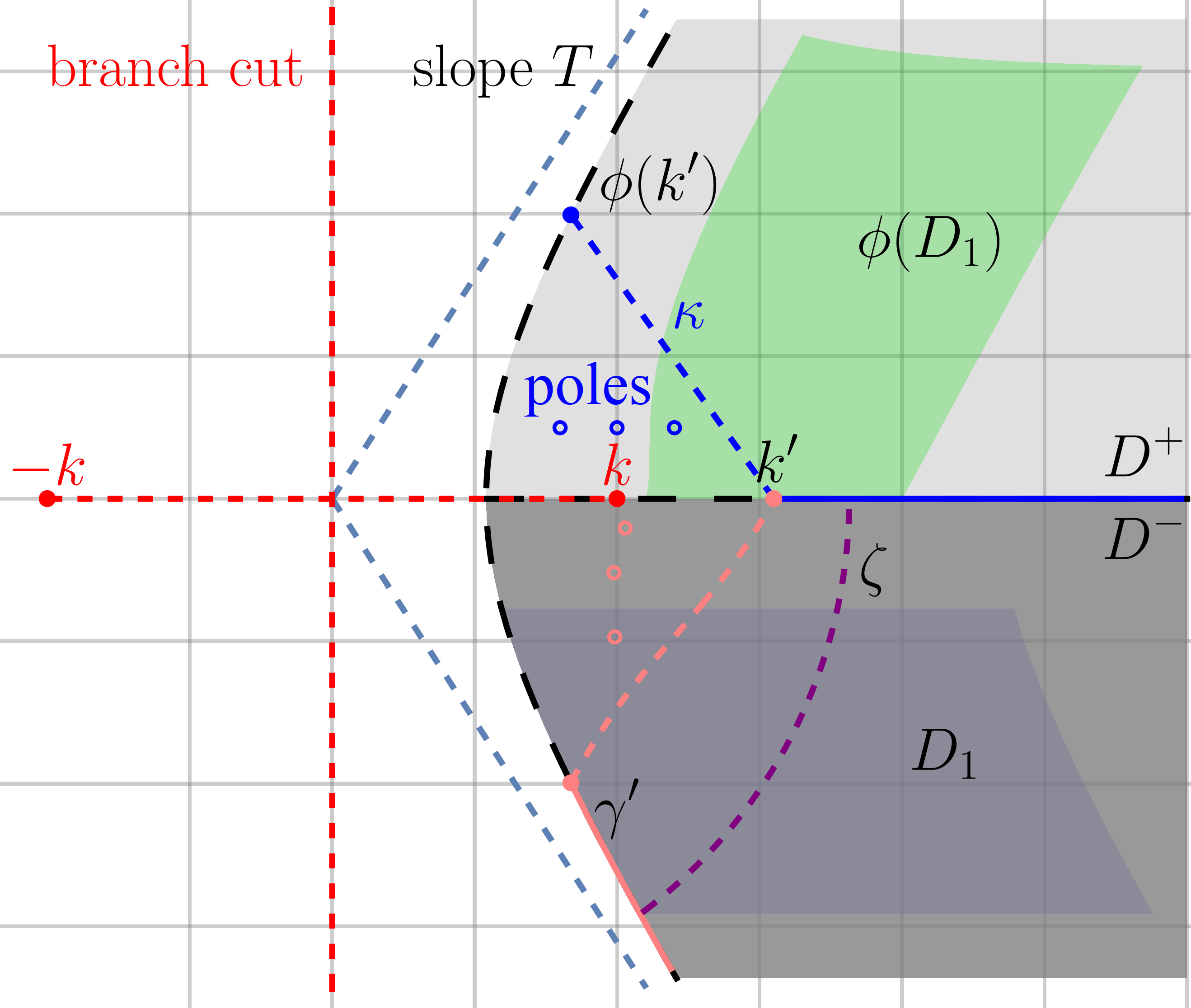}\caption{The
mapping $\phi$ which maps $D^{-}$ to $D^{+}$. The shadowed region $D_{1}$ is
for illustration.}%
\label{fig:picchangingcontour}%
\end{figure}

\subsubsection{The general Bessel-type expansion}

With the above preparation, we can now prove the expansion
\cref{eq-Bessel-type-expansion} when $f(\lambda)$ has a polynomial bound in
$\Omega$ when $|\lambda|$ is sufficiently large and $\Im\lambda/\Re\lambda$ is
bounded. To make it clear, we make the following assumptions.

\begin{assumption}\label{assumption}
Given $T>0$, $\epsilon_0 >0$.
Suppose $f(\lambda)$ is a complex function with branch points $\pm k_0,\cdots,\pm k_L$, it is even and is meromorphic in $\mathbb{C}$ excluding the branch cuts of $\sqrt{\lambda^2-k_l^2}$, $0 \le l \le L$, with poles of order up to one.
Assume that
\begin{itemize}
\item $f(\lambda)$ has a decomposition
\begin{equation}\label{eq-f-decomp-imag-poles}
f(\lambda) = \sum_{r=1}^{n_r}\frac{f_r}{\lambda-\lambda_r} +\bar{f}(\lambda)\text{, and } \bar{f}(\lambda) = \sum_{c=1}^{n_c}\frac{f_c}{\lambda-\lambda_c} +\bar{\bar{f}}(\lambda),
\end{equation}
here $\lambda_r \ne 0$ are all the real poles of $f(\lambda)$ with residue $f_r$, $\lambda_c$ are all the (complex) poles of $f(\lambda)$ in the region $\Omega_T^+ = \{ a+b\i: a>0, 0<b<aT \}$ with residue $f_c$, respectively, and are bounded by $\lambda_{M} = \max_{1\le c \le n_c}|\lambda_c|$.
\item $\left|\bar{\bar{f}}(\lambda)\right|\le C\left(1+|\lambda|^K\right)$ for any $\lambda\in\Omega_T^+\cup \mathbb{R}^+$ satisfying $ \max_{0\le l \le L}|\lambda-k_l|\ge\epsilon_0$, here $C>0$ and $K \in \mathbb{N}\cup\{0\}$ are given constants.
\item For $k'=4k_M+2\lambda_M+2\epsilon_0$, the integral $S=\int_{-k'}^{k'} \left|\bar{f}(\lambda)\right|d\lambda < +\infty$.
\end{itemize}
\end{assumption}

\begin{lemma}\label{lemma-xy}
Let $c_0 > 1$, $T>0$, $\epsilon_0 >0$ be some given constants.
Let $k_1,\cdots,k_L$ be given wave numbers of the layers with maximum $k_M$, and $k$ is one of the wave numbers.
Suppose the function ${f}(\lambda)$ is given satisfying \cref{assumption}, and $\bar{f}(\lambda)$ is so defined with the real poles removed from $f(\lambda)$, $(\rho,\theta)$ and $(\rho',\theta')$ are the polar coordinates of $(x,y)$ and $(x',y')$, respectively, and $\rho > c_0 \rho' \ge 0$.
Suppose $y>0$, $y+y'>0$ and $|x|<Ty$.
Then, the integral Bessel-type expansion \cref{eq-Bessel-type-expansion} holds (by replacing the original $f(\lambda)$) with $\bar{f}(\lambda)$ on the interval $(k',\infty)$, with the truncation error estimate for a finite $P$-term truncation
\begin{equation}
\left| \sum_{|p| \ge P} J_p(k\rho')e^{\i p \theta'} \int_{k'}^{\infty} \Psi(\lambda) \left(-\i w(\lambda)\right)^p \bar{f}(\lambda) d\lambda \right| \le c_+(P)\left( \frac{\rho'}{\rho} \right)^P
\end{equation}
for any sufficiently large $P \ge m_+(\rho)$, here $m_+(\rho)$ is an (at most) quadratic function of $\rho$, and $c_+(\cdot)$ is a function with polynomial growth rate.
\end{lemma}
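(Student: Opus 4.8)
The plan is to reduce \cref{lemma-xy} to the already-proven special cases by means of the Cagniard--de Hoop transform, treating the part of the integral close to $\lambda=k'$ on a separate short contour. Since $f$, hence $\bar f$, is even, reflecting $x\mapsto-x$, $x'\mapsto-x'$ reduces everything to $x\ge 0$; the case $x=0$ gives $\Psi(\lambda)=e^{-\sqrt{\lambda^2-k^2}\rho}$ and is precisely \cref{lemma-Epp} (note $\bar f$ is polynomially bounded on $[k',\infty)$, where it has neither poles nor branch cuts). So I would assume $x>0$ and set $\beta=\tfrac\pi2-\theta$, so that $y+\i x=\rho e^{\i\beta}$ and $\tan\beta=x/y<T$ by hypothesis; this strict bound keeps $\beta$ bounded away from $\tfrac\pi2$ and will be used throughout to make constants uniform.

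Fix $p\in\mathbb Z$. First I would deform the contour of $F_p(x,y)=\int_{k'}^\infty\Psi(\lambda)\bigl(-\i w(\lambda)\bigr)^p\bar f(\lambda)\,d\lambda$ off the real axis. With $w_1=k'/\cos\beta>k'$ and $\lambda_1=\phi(w_1)$ (so $\Re\lambda_1=k'$), let $\kappa$ be the vertical segment from $k'$ to $\lambda_1$ followed by the portion $\phi\bigl((w_1,\infty)\bigr)$ of $\gamma^+$. The region swept between $[k',\infty)$ and $\kappa$ sits in $\{\Re\lambda\ge k'\}\cap\Omega_T^+$ (as $\gamma^+$ has slope $<\tan\beta<T$), so it meets no branch cut of any $\sqrt{\lambda^2-k_l^2}$ (whose real parts are $\le k_M<k'$) and no pole of $\bar f$, since the only poles of $\bar f$ are the $\lambda_c$ with $|\lambda_c|\le\lambda_M<k'$. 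On the connecting arc at radius $R$, $|\Psi(\lambda)|\le e^{-R(y\cos\psi+x\sin\psi)}\le e^{-yR}$ on the relevant sector $0\le\psi\le\beta$ (this uses $x>0$), which outpaces the polynomial growth of $|w(\lambda)|^{\pm|p|}\,|\bar f(\lambda)|$, so the arc drops out. Cauchy's theorem then yields $F_p=S_p+G_p$ with $S_p$ the integral over the segment and $G_p$ the integral over $\phi\bigl((w_1,\infty)\bigr)$, \emph{and no residue terms}.

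For $S_p$ I would invoke \cref{lemma-bounded-contour}: on the finite segment $x\cdot\Im\lambda\ge0$ and $\bar f$ is holomorphic and bounded, giving $\sum_p J_p(k\rho')e^{\i p\theta'}S_p=\int_{\mathrm{seg}}\Psi\Psi'\bar f\,d\lambda$ together with a tail bound $\le\tfrac{2c_0}{c_0-1}f_0|\mathrm{seg}|(\rho'/\rho)^P$ for $P\ge e(|\lambda_1|+k/2)\rho$; since $\cos\beta>1/\sqrt{1+T^2}$, the numbers $|\mathrm{seg}|$, $|\lambda_1|$, $f_0=\sup_{\mathrm{seg}}|\bar f|$ are all bounded by $k,k_M,\lambda_M,\epsilon_0,T$ and $C,K$. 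For $G_p$ I would substitute $\lambda=\phi(w)$, $w\in(w_1,\infty)$; a short computation with the branch \cref{eq-branch-cut} gives $\Psi(\phi(w))=e^{-\sqrt{w^2-k^2}\rho}$ and $w(\phi(w))=e^{-\i\beta}\,\tfrac{w-\sqrt{w^2-k^2}}{k}$, so $G_p=e^{-\i p\beta}\int_{w_1}^\infty e^{-\sqrt{w^2-k^2}\rho}\bigl(-\i\tfrac{w-\sqrt{w^2-k^2}}{k}\bigr)^p g(w)\,dw$ with $g(w)=\bar f(\phi(w))\phi'(w)$. Here $|\phi'(w)|\le3$ for $w\ge w_1\ge2k$, and on $\phi\bigl((w_1,\infty)\bigr)\subset\Omega_T^+$ one has $\Re\lambda\ge k'$ (so $|\lambda-k_l|\ge\epsilon_0$ and $|\lambda-\lambda_c|\ge k'-\lambda_M$), whence $|g(w)|\le C'w^K$. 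Then the $|E_p^+|$ estimate of \cref{lemma-Epp} (whose proof works for any lower limit $\ge k$) gives $|G_p|\le3C'(|p|+K)!(2/\rho)^{K+1}(k\rho/2)^{-|p|}$ for $|p|\ge(k\rho)^2/4+1-K$; combined with $|J_p(k\rho')|\le\tfrac1{|p|!}(k\rho'/2)^{|p|}$ from \cref{lemma-Jn-bound}, each term is $\le3C'\tfrac{(|p|+K)!}{|p|!}(2/\rho)^{K+1}(\rho'/\rho)^{|p|}$, a series that converges since $\rho'/\rho<1/c_0$. That absolute convergence legitimizes Fubini in $\int_{\phi((w_1,\infty))}\Psi\Psi'\bar f\,d\lambda=\sum_p J_p(k\rho')e^{\i p\theta'}G_p$ (expanding $\Psi'$ by the Bessel generating function), and summing over $|p|\ge P$ exactly as in the proof of \cref{lemma-Epp} bounds that tail by $c_G(P)(\rho'/\rho)^P$ with $c_G$ a polynomial of degree $K$.

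Adding the two expansions recovers $\int_{k'}^\infty\Psi\Psi'\bar f\,d\lambda=\sum_p J_p(k\rho')e^{\i p\theta'}F_p$, and adding the two tail bounds proves \cref{lemma-xy} with $c_+(P)$ equal to a constant plus $c_G(P)$ and $m_+(\rho)=\max\{e(|\lambda_1|+k/2)\rho,\ (k\rho)^2/4+1-K\}$, which is at most quadratic in $\rho$; the final uniformity clause follows because every ingredient above is controlled by the stated bounds once $\rho\in[\rho_m,\rho_M]$ and $\tan\beta<T$. I expect the main difficulty to be the contour-deformation bookkeeping: proving the sectorial exponential decay of $|\Psi|$ that annihilates the far arc (where the hypothesis $|x|<Ty$ is genuinely needed), verifying that the swept region contains no pole of $\bar f$ and no branch cut, and tracking that $w_1=k'/\cos\beta$, the auxiliary segment and $\sup_{\mathrm{seg}}|\bar f|$ remain bounded; by comparison, the Cagniard--de Hoop substitution identities for $\Psi\circ\phi$ and $w\circ\phi$ are routine once the square-root branch is pinned down.
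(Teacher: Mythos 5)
Your proposal is correct and follows essentially the same route as the paper: the Cagniard--de Hoop map, a finite connecting contour handled by \cref{lemma-bounded-contour}, and the hyperbolic piece reduced (via $\Psi(\phi(w))=e^{-\sqrt{w^2-k^2}\rho}$, $w(\phi(w))=e^{-\i\beta}(w-\sqrt{w^2-k^2})/k$) to the $|E_p^+|$ estimate of \cref{lemma-Epp}, the only difference being that you push the real ray forward onto $\gamma^{+}$ and expand directly in $\lambda$, whereas the paper pulls the contour back through $\phi^{-1}$ to the real axis and converts term by term. The one step you leave implicit is the deformation of the \emph{full} integral $\int_{k'}^{\infty}\Psi\Psi'\bar f\,d\lambda$ onto the new contour: there the far arc carries $\Psi\Psi'$, not just $\Psi$, and its decay rests on $(y+y')\cos\psi+(x-x')\sin\psi\ge\min\{y+y',\rho-\rho'\}>0$ for $\psi\in[0,\beta]$ — this is precisely where the hypotheses $y+y'>0$ and $\rho>\rho'$ enter, so it should be written out.
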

\begin{proof}
If $x=0$, then $y = \sqrt{x^2+y^2} = \rho$, and
\begin{align*}
|\bar{f}(\lambda)| \le C\left(1+|\lambda|^K\right) + \sum_{c=1}^{n_c} \frac{|f_c|}{\Im \lambda_c} \le C_1\left(1+|\lambda|^K\right)
\end{align*}
for any $\lambda \in (k',\infty)$, here $C_1>0$ is a constant number.
By \cref{lemma-Epp}, we can choose
\begin{equation*}
m(\rho) = \left(\frac{k\rho}{2}\right)^{2} +1 -K,\quad
c_+(P)=6C_1(K+1)!\left( \frac{2c_0}{\rho (c_0-1)} \right)^{K+1}(P+K)^K.
\end{equation*}
If $x \ne 0$, without loss of generality assume $x>0$, since the $x<0$ case will follow by taking complex conjugates.
Let $\beta = \frac{\pi}{2}-\theta$, then $\tan\beta\in(0,T)$.
Let $\kappa$ be the segment from $\phi(k')$ to $k'$ (see \cref{fig:picchangingcontour}).
One can verify the length of $\kappa$ is bounded by $\sqrt{2}k'$, and that $ \lambda_M+k_M \le |\lambda| \le \sqrt{2}k'$ and $|\lambda - k_l| > \epsilon_0$ for $\forall\lambda \in \kappa$ and $\forall 0 \le l \le L$.
Define
\begin{equation}
E = \int_{\kappa \cup (k',\infty)}\Psi(\lambda)\Psi'(\lambda)\bar{f}(\lambda) d\lambda,\quad G=\int_{\kappa}\Psi(\lambda)\Psi'(\lambda)\bar{f}(\lambda) d\lambda,
\end{equation}
we will discuss the expansions on $E$ and on $G$, separately, then give the integral Bessel-type expansion for $E-G$.
For $G$, on $\kappa$ we have the bound of $\bar{f}(\lambda)$ by
\begin{equation}
|\bar{f}(\lambda)| \le C\left(1+|\lambda|^K\right) + \sum_{n=1}^{n_c}\frac{|f_c|}{|\lambda-\lambda_c|} \le C\left(1+(\sqrt{2}k')^K\right) + \sum_{c=1}^{n_c}\frac{|f_c|}{k_M} := C_2,
\end{equation}
so by \cref{lemma-bounded-contour}, the Bessel-type expansion on $\kappa$ is given by
\begin{equation}
G = \sum_{p=-\infty}^{\infty}J_p(k\rho')e^{\i p \theta'} G_p,\quad G_p = \int_{\kappa} \Psi(\lambda)\left(-\i w(\lambda)\right)^p \bar{f}(\lambda)d\lambda,
\end{equation}
with truncation error
\begin{equation}\label{eq-truncation-K}
\left|\sum_{|p| \ge P}J_p(k\rho')e^{\i p \theta'} G_p\right| \le c_\kappa(P)\left( \frac{\rho'}{\rho} \right)^P \text{ for } P \ge m_\kappa(\rho),
\end{equation}
here $c_\kappa(P) = {2c_0}C_2\cdot \sqrt{2}k'/(c_0-1)$, $m_\kappa(\rho) = e(\lambda_M+k/2)\rho$.
For the contour $\kappa \cup (k',\infty)$, with the substitution $\lambda = \phi(\lambda')$
we have
\begin{equation*}
\sqrt{\lambda'^2-k^2}=\frac{\phi(\lambda')-\lambda'\cos\beta}{\i\sin\beta}=\frac{\lambda-(\lambda\cos\beta-\i\sqrt{\lambda^2-k^2}\sin\beta)\cos\beta}{\i\sin\beta},
\end{equation*}
so $\Psi(\lambda) = e^{-\sqrt{\lambda'^2-k^2}\rho}\text{, }\Psi'(\lambda) =e^{-\sqrt{{\lambda'}^2-k^2}\rho' \sin(\theta'-\beta) -\i\lambda' \rho' \cos(\theta'-\beta)}, w(\lambda') = e^{\i\beta}w(\lambda)$.
Hence
\begin{equation}
E = \int_{\phi^{-1}(\kappa)\cup\gamma'} e^{-\sqrt{{\lambda'}^2-k^2}(\rho + \rho' \sin(\theta'-\beta)) -\i\lambda' \rho' \cos(\theta'-\beta)} \tilde{f}(\lambda')d\lambda',
\end{equation}
here $\gamma' = \phi^{-1}\left( (k',\infty) \right)$ is the lower part of $\gamma$ starting from $\phi^{-1}(k')$, and
\begin{equation}\label{eq-in-thm-fprime}
\tilde{f}(\lambda') = \bar{f}(\lambda) \frac{d\lambda}{d\lambda'}=\bar{f}\left(\phi(\lambda')\right) \frac{\sqrt{\phi(\lambda')^2-k^2}}{\sqrt{\lambda'^2-k^2}}.
\end{equation}
Since $\phi(\lambda')$ has a polynomial bound, roughly,
\begin{equation}\label{eq-in-thm-pbound-phi}
\left|\phi(\lambda')\right| = \left| \lambda'\cos\beta+\i\sqrt{\lambda'^2-k^2}\sin\beta \right| \le |\lambda'|+\sqrt{|\lambda'|^2+k^2} \le 2|\lambda'|+k,
\end{equation}
when $\lambda' \in D^-$ and $|\lambda'|$ is sufficiently large, $\tilde{f}(\lambda')$ also has a polynomial bound of $|\lambda'|$.

Next, we proceed to changing the contour of the integral $E$ from $\phi^{-1}(\kappa)\cup\gamma'$ to $(k',\infty)$.
Let $\zeta$ be the counterclockwise arc with radius $r$ connecting $\phi^{-1}(\kappa)\cup\gamma'$ and the real axis, parameterized by $\lambda'=re^{\i\eta}$, the range of $\eta$ is a subset of $(-\beta, 0)$.
On the arc $\zeta:\lambda' = re^{\i\eta}$, as $r \to \infty$ 
the exponent of the integrand in $E$ satisfies
\begin{align*}
-\sqrt{\lambda'^2-k^2}\left( \rho+\rho'\sin(\theta'-\beta) \right)-\i\lambda'\rho'\cos(\theta'-\beta) &\sim -\lambda'e^{\i\beta}\left( \rho e^{-\i\beta} + \i \rho'e^{-\i\theta'} \right) \\
&\sim r \bar{\rho}\exp\left(\i\left( \eta+\bar{\theta} + \beta +\frac{\pi}{2}\right)\right)
\end{align*}
where $(\bar{\rho},\bar{\theta})$ are the polar coordinates of $(x-x',y+y')$, and the rest of the integrand has a polynomial bound. 
Since $y+y'>0$, $\rho>\rho'$, one can verify $\bar{\theta} \in (0,\pi - \beta)$.
Then
\begin{equation*}
\Re \left\{ r \bar{\rho}\exp\left(\i\left( \eta+\bar{\theta} + \beta +\frac{\pi}{2}\right)\right) \right\} \le r\cdot \max\left\{ -(y+y'),-\rho-\rho'\sin(\theta'-\beta) \right\}
\end{equation*}
for any $\eta \in (-\beta, 0)$, so the integrand on $\zeta$ decays exponentially, and the corresponding integral on $\zeta$ vanishes as $r \to +\infty$.
Also notice that there are no poles of $\tilde{f}(\lambda')$ in $D' \subset D^-$ which is the region enveloped by $(k',+\infty)$ and $\phi^{-1}(\kappa) \cup \gamma'$, since $\phi$ is a holomorphic function on $D'$, and there are no poles in $D'$ because for any $\lambda' \in D'$ and any pole $\lambda_c \in \Omega_T^+$, $|\phi(\lambda')| \ge \lambda_M+k_M > |\lambda_c|$.
Hence, by deforming the integration contour in $E$ to the real axis, we have
\begin{equation}\label{eq-in-thm-E-w-poles}
E = E' := \int_{(k',\infty)} e^{-\sqrt{{\lambda'}^2-k^2}(\rho + \rho' \sin(\theta'-\beta)) -\i\lambda' \rho' \cos(\theta'-\beta)} \tilde{f}(\lambda')d\lambda'.
\end{equation}
For $\lambda' \in (k',\infty)$, recall that
\begin{equation*}
\tilde{f}(\lambda) = \frac{\sqrt{\lambda^2-k^2}}{\sqrt{\lambda'^2-k^2}}\left( \bar{\bar{f}}(\phi(\lambda')) + \sum_{c=1}^{n_c} \frac{f_c}{\phi(\lambda')-\lambda_c} \right),
\end{equation*}
for each $\lambda_c$ we have $|\phi(\lambda')-\lambda_c| \ge |\phi(\lambda')|-|\lambda_c| \ge \sqrt{\lambda'^2-k^2\sin^2\beta} - \lambda_M \ge 3k_M$, so using \cref{eq-in-thm-pbound-phi}, there exists some constant $C_2>0$ such that
\begin{equation*}
|\tilde{f}(\lambda')| \le \frac{2|\lambda'|+2k}{3k_M} \left( C\left( 1+(2|\lambda'|+k)^K \right) + \sum_{c=1}^{n_c}\frac{|f_c|}{3k_M} \right) \le C_2 |\lambda'|^{K+1}.
\end{equation*}
Hence by \Cref{lemma-Epp}, $E'$ has the series expansion
\begin{equation}\label{eq-in-thm-Ep-expansion}
E' = \sum_{p=-\infty}^{\infty}J_p(k \rho')e^{\i p (\theta'-\beta)} \int_{k'}^{\infty}e^{-\sqrt{\lambda'^2-k^2}\rho} \left(-\i w(\lambda') \right)^p \tilde{f}(\lambda')d\lambda'
\end{equation}
with a $P$-term truncation error
\begin{equation}\label{eq-in-thm-Ep-err}
\left| \sum_{|p| \ge P}J_p(k \rho')e^{\i p (\theta'-\beta)} \int_{k'}^{\infty}e^{-\sqrt{\lambda'^2-k^2}\rho} \left(-\i w(\lambda') \right)^p \tilde{f}(\lambda')d\lambda' \right| \le c_{E'}(P) \left(\frac{\rho'}{\rho} \right)^{P}
\end{equation}
for $P \ge m_{E'}(\rho)= (k\rho)^2/4 -K$, here
\begin{equation}
c_{E'}(P) = 6C_2(K+2)!\left( \frac{2c_0}{\rho (c_0-1)} \right)^{K+2}(P+K+1)^{K+1}.
\end{equation}
In the series \cref{eq-in-thm-Ep-expansion}, the $p-$th term is
\begin{align}
&e^{-\i p\beta} \int_{(k,\infty)}e^{-\sqrt{\lambda'^2-k^2}\rho} \left(-\i w(\lambda') \right)^p \tilde{f}(\lambda')d\lambda' \nonumber \\
={}& e^{-\i p\beta} \int_{\phi^{-1}(\kappa) \cup \gamma'}e^{-\sqrt{\lambda'^2-k^2}\rho} \left(-\i w(\lambda') \right)^p \tilde{f}(\lambda')d\lambda' \\
={}& \int_{\kappa \cup (k',\infty)}\Psi(\lambda) \left(-\i w(\lambda) \right)^p \bar{f}(\lambda)d\lambda := E_p,
\end{align}
here the first equality is derived similarly by changing the contour of the integral, where on the path $\zeta: \lambda' = re^{\i \eta}$ the integrand decays exponentially as $r \to \infty$, since the real part of the exponent
\begin{equation*}
\Re{\left( -\sqrt{(re^{\i\eta})^2-k^2}\rho \right) } \sim \Re(-re^{\i\eta}\rho) \le -ry,
\end{equation*}
while the remaining parts have polynomial growth rate.
The second equality is by the substitution from $\lambda'$ to $\lambda$.
In total we have proved the series expansion of $E$ given by
$
E = \sum_{p=-\infty}^{\infty}J_{p}(k\rho')e^{\i p\theta'}E_p
$ 
with a $P$-term truncation error
\begin{equation}\label{eq-truncation-E}
\left|E-\sum_{|p|<P}J_{p}(k\rho')e^{\i p\theta'}E_p\right| \le c_{E'}(P) \left( \frac{\rho'}{\rho} \right)^P \text{ for }P \ge m_{E'}(\rho).
\end{equation}
For each $p$,
\begin{align}
\begin{split}
E_p - G_p &= \int_{\kappa \cup (k',\infty)}\Psi(\lambda) \left(-\i w(\lambda) \right)^p \bar{f}(\lambda)d\lambda - \int_{\kappa}\Psi(\lambda) \left(-\i w(\lambda) \right)^p \bar{f}(\lambda)d\lambda \\
&= \int_{k'}^{\infty}\Psi(\lambda) \left(-\i w(\lambda) \right)^p \bar{f}(\lambda)d\lambda,
\end{split}
\end{align}
which is the desired expansion function in the Bessel-type expansion \cref{eq-Bessel-type-expansion}.
Hence by combining the results \cref{eq-truncation-K} and \cref{eq-truncation-E}, for any (finite) $P \ge \max\{ m_{E'}(\rho), m_\kappa(\rho) \}$,
\begin{align*}
\begin{split}
&\left|\int_{k'}^{\infty}\Psi(\lambda)\Psi'(\lambda) \bar{f}(\lambda)d\lambda -\sum_{|p| < P} J_{p}(k\rho')e^{\i p\theta'}\int_{k'}^{\infty}\Psi(\lambda) \left(-\i w(\lambda) \right)^p \bar{f}(\lambda)d\lambda \right| \\
\le & \left|E-\sum_{|p| < P} J_p(k\rho')e^{\i p\theta'} E_p\right| + \left|G-\sum_{|p| < P} J_p(k\rho')e^{\i p\theta'}G_p\right| \le \left( c_{E'}(P) + c_{\kappa}(P)\right)\left( \frac{\rho'}{\rho} \right)^P
\end{split}
\end{align*}
which suggests $c_+(P) = c_{E'}(P) + c_{\kappa}(P)$ and $m_+(\rho) =\max\{ m_{E'}(\rho), m_\kappa(\rho) \}$.
\end{proof}

\begin{theorem}[the Bessel-type expansion]\label{thm-Bessel-type-expansion}Suppose conditions of \cref{lemma-xy} are satisfied.
Further suppose $0<\rho_m<\rho_M$ are given such that $\rho \in [\rho_m,\rho_M]$.
Then, the integral Bessel-type expansion \cref{eq-Bessel-type-expansion} holds with a truncation error estimate
\begin{equation}\label{eq-thm-truncation-result}
\left|\int_{-\infty}^{\infty}e^{-\sqrt{\lambda^{2}-k^{2}}(y+y')+\i \lambda (x-x')}f(\lambda)d\lambda
- \sum_{|p|<P}J_{p}(k\rho')e^{\i p\theta'}F_p\right|\le c(P) \left(\frac{\rho'}{\rho}\right)^P
\end{equation}
when $P$ is sufficiently large and $c(\cdot)$ is a function with a polynomial growth rate.
\end{theorem}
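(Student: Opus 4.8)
The plan is to reduce the statement to the three ingredients already in hand: \cref{lemma-xy} for the tail $(k',\infty)$, \cref{lemma-bounded-interval} (and, on indented pieces, \cref{lemma-bounded-contour}) for the window $[-k',k']$, and \cref{lemma-pole} for the finitely many pointwise contributions coming from surface-wave poles. Write $h(\lambda)=e^{-\sqrt{\lambda^{2}-k^{2}}(y+y')+\i\lambda(x-x')}$ for the exponential factor, so the integrand in \cref{eq-Bessel-type-expansion} is $h(\lambda)f(\lambda)$, interpreted in the limiting-absorption sense of \cref{eq-int-pole}. First I would split the line of integration as
\[
\int_{-\infty}^{\infty}=\int_{-\infty}^{-k'}+\int_{-k'}^{k'}+\int_{k'}^{\infty}.
\]
By \cref{assumption}, $k'=4k_{M}+2\lambda_{M}+2\epsilon_{0}$ exceeds $k_{M}$, hence exceeds every real (surface-wave) pole $\lambda_{r}$ of $f$, so on the two outer half-lines $f$ coincides with $\bar f$. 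On $(k',\infty)$, \cref{lemma-xy} gives the expansion directly, with its polynomial prefactor $c_{+}(\cdot)$ and its (at most quadratic in $\rho$) threshold $m_{+}(\rho)$. On $(-\infty,-k')$, the substitution $\lambda\mapsto-\lambda$ together with the evenness of $f$ converts the integral into one of $(k',\infty)$-type with $(x,y)$ replaced by $(-x,y)$ and $(x',y')$ by $(-x',y')$ — a change that leaves the hypotheses $y>0$, $y+y'>0$, $|x|<Ty$, $\rho>c_{0}\rho'$ intact — so \cref{lemma-xy} applies once more; a short parity computation (using $J_{-p}=(-1)^{p}J_{p}$, $e^{\i p\pi}=(-1)^{p}$, and $w(-\lambda)=-w(\lambda)^{-1}$) shows that, after reindexing $p\mapsto-p$, this piece supplies exactly the $(-\infty,-k')$ part of the claimed expansion function $F_{p}$.

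For the middle window $f$ is no longer integrable, so I would invoke \cref{eq-int-pole} with $(a,b)=(-\infty,\infty)$; its principal-value term vanishing over the whole line, this identifies the integral with $\mathrm{p.v.}\int_{\mathbb R}h f\pm\i\pi\sum_{\nu}h(\lambda_{\nu})f_{\nu}$, the sum running over the real poles $\lambda_{\nu}$ with residues $f_{\nu}$. Writing $f=\bar f+\sum_{\nu}f_{\nu}/(\lambda-\lambda_{\nu})$, the restriction of $\mathrm{p.v.}\int_{\mathbb R}h f$ to $[-k',k']$ splits into the $\bar f$ part, which has $\int_{-k'}^{k'}|\bar f|=S<\infty$ and is handled verbatim by \cref{lemma-bounded-interval}, and the principal-value integrals $\mathrm{p.v.}\int_{-k'}^{k'}\Psi(\lambda)(-\i w(\lambda))^{p}f_{\nu}/(\lambda-\lambda_{\nu})\,d\lambda$. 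The point that makes the latter expandable term by term is that, near $\lambda_{\nu}$, the residues of the $p$-terms sum — precisely by the generating-function identity \cref{eq-Bessel-type-expansion-discrete} underlying \cref{lemma-pole} — to the residue $h(\lambda_{\nu})f_{\nu}$ of the full integrand, so the principal-value structure is preserved under the expansion; a standard $C^{1}$ bound for Cauchy principal values costs only a factor $\mathcal O(|p|)$ on top of $(2k'/k)^{|p|}$, which, multiplied by $|J_{p}(k\rho')|\le (k\rho'/2)^{|p|}/|p|!$ and summed over $|p|\ge P$, still gives a geometric tail $(\rho'/\rho)^{P}$ with polynomial prefactor once $P\gtrsim ek'\rho$. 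Finally each pointwise term $h(\lambda_{\nu})f_{\nu}$ is expanded by \cref{lemma-pole}, whose hypothesis $x\cdot\Im\lambda_{\nu}\ge0$ is automatic since $\lambda_{\nu}\in\mathbb R$.

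Assembling, the expansion function $F_{p}$ of the statement is the sum of the expansion functions produced on $(k',\infty)$, on $(-\infty,-k')$, on $[-k',k']$, and of the pointwise surface-wave terms; the total truncation error is bounded by the sum of the corresponding bounds, which is legitimate once $P$ exceeds the maximum of the individual thresholds (still an at-most-quadratic function of $\rho$), and the prefactor is the sum of the individual prefactors, hence still of polynomial growth. Since $\rho\in[\rho_{m},\rho_{M}]$, every occurrence of $\rho$ in these thresholds and prefactors may be replaced by $\rho_{m}$ or $\rho_{M}$ as appropriate, so the threshold and the function $c(\cdot)$ can be chosen to depend only on $\rho_{m},\rho_{M}$ and the fixed data. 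The step I expect to be the main obstacle is exactly the surface-wave poles: making the limiting-absorption prescription \cref{eq-int-pole} cooperate with the contour-based estimates. In the indented-contour version of the argument (replace $\mathbb R$ near each $\lambda_{\nu}$ by a small semicircular detour) \cref{lemma-bounded-contour} demands $x\cdot\Im\lambda\ge0$ on the detour, which forces the detour onto the side matching $\operatorname{sign}x$; whenever that disagrees with the side dictated by the physical limit one must add a full residue $2\pi\i\,h(\lambda_{\nu})f_{\nu}$, itself expanded through \cref{lemma-pole}. Getting this bookkeeping right — together with the parity bookkeeping of the folded half-line, and checking that the $\mathcal O(|p|)$ loss from the principal-value estimates does not spoil the geometric rate — is the technical heart; everything else is routine once \cref{lemma-xy,lemma-bounded-interval,lemma-bounded-contour,lemma-pole} are available.
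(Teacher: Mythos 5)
Your overall architecture matches the paper's: the same splitting of the line into $(-\infty,-k')$, $[-k',k']$, $(k',\infty)$ plus finitely many pointwise surface-wave contributions, with \cref{lemma-xy} on the tails, \cref{lemma-bounded-interval} on the window, and \cref{lemma-pole} on the pointwise terms, and the same summation of thresholds and prefactors at the end (including freezing $\rho$ at $\rho_m$, $\rho_M$). The divergence --- and the gap --- is in the bookkeeping of the real poles. The paper subtracts the singular parts \emph{globally}: it writes $I=\sum_r\tau_r\i\pi\Psi(\lambda_r)\Psi'(\lambda_r)f_r+\int_{\mathbb R}\Psi(\lambda)\Psi'(\lambda)\bar f(\lambda)\,d\lambda$, so that $\bar f$ (not $f$) appears on \emph{all three} pieces of the real line and every remaining integral is absolutely convergent; no principal values survive. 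Your claim that ``on the two outer half-lines $f$ coincides with $\bar f$'' is false: by \cref{eq-f-decomp-imag-poles}, $\bar f=f-\sum_r f_r/(\lambda-\lambda_r)$, and the subtracted terms do not vanish for $|\lambda|>k'$ --- they are merely regular there. As written, your pieces do not sum back to $I$: you are off by $\sum_r\int_{|\lambda|>k'}\Psi(\lambda)\Psi'(\lambda)f_r/(\lambda-\lambda_r)\,d\lambda$ (equivalently, if you apply \cref{lemma-xy} to $\bar f$ on the tails, as that lemma requires, while keeping the full $f$ only on the window, those tail pole contributions are never expanded).

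Because you localize the pole subtraction to $[-k',k']$, you are then forced to expand principal-value integrals $\mathrm{p.v.}\int_{-k'}^{k'}\Psi(\lambda)(-\i w(\lambda))^pf_\nu/(\lambda-\lambda_\nu)\,d\lambda$ term by term, and you flag this yourself as the technical heart. This entire difficulty is an artifact of the localization: in the paper's decomposition the only surviving trace of each real pole is the single residue term $\tau_r\i\pi\Psi(\lambda_r)\Psi'(\lambda_r)f_r$ (the whole-line principal value of the constant-numerator term in \cref{eq-int-pole} vanishes), and that term is expanded pointwise by \cref{lemma-pole} with no principal values, no indented contours, and no $O(|p|)$ losses to control. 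Your sketched argument for the p.v.\ pieces is not carried out --- the interchange of the $p$-sum with the principal-value limit, and the claim that the residues of the $p$-terms resum to the residue of the full integrand, are exactly the steps that need proof --- and the closing discussion of semicircular detours and extra $2\pi\i$ residues is a symptom of the same problem. To repair the proof, adopt the global subtraction: replace $f$ by $\bar f$ on all of $\mathbb R$, add the residue terms once, and the rest of your argument goes through essentially as the paper's does.
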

\begin{proof}
Consider the decomposition of the integral
\begin{equation}\label{eq-in-thm-int-aligned}
\begin{split}
I=&\int_{-\infty}^{\infty}e^{-\sqrt{\lambda^{2}-k^{2}}(y+y')+\i \lambda (x-x')}f(\lambda)d\lambda \\
=& \sum_{r=1}^{n_r}\tau_r \i\pi \Psi(\lambda_r)\Psi'(\lambda_r)f_r +  \left(\int_{-\infty}^{-k'}+\int_{-k'}^{k'}+\int_{k'}^{\infty}\right) \Psi(\lambda)\Psi'(\lambda)\bar{f}(\lambda)d\lambda \\
:=&\sum_{r=1}^{n_r}I_r + I_- + I_0 + I_+,
\end{split}
\end{equation}
here each $\tau_r = \pm 1$ are given by the well-posed physical problem (see \cref{eq-int-pole}).
Each term of the decomposition with index $j$ has the corresponding Bessel-type expansion, $j=0,1,\cdots,n_r,+,-$.
Namely, for each $I_r$, by \cref{lemma-pole}, by choosing $c_r(P) = {2\pi c_0 |f_r|}/{(c_0-1)}$ and $e( |\lambda_r|+{k}/{2} )\rho$,
the pointwise Bessel-type expansion \cref{eq-Bessel-type-expansion-discrete} holds
\begin{align*}
I_r = \sum_{p=-\infty}^{\infty} J_p(k \rho') e^{\i p \theta'} I_{r,p},\quad I_{r,p} = \tau_r \i \pi \Psi(\lambda_r ) \left( -\i w(\lambda_r) \right)^p
\end{align*}
with the truncation error for a $P$-term truncation
\begin{equation}
\left| \sum_{|p| \ge P} J_p(k \rho')e^{\i p \theta'} I_{r,p} \right| \le c_r(P) \left(\frac{\rho'}{\rho}\right)^P \text{ for } P \ge m_r(P).
\end{equation}
For $I_0$, by \cref{lemma-bounded-interval}, by choosing $c_0(P)={2\pi c_0 S}/{(c_0-1)}$ and $m_0(\rho) = ek'\rho$, the integral Bessel-type expansion \cref{eq-Bessel-type-expansion} holds
\begin{align*}
I_0 = \sum_{p=-\infty}^{\infty} J_p(k \rho') e^{\i p \theta'} I_{0,p},\quad I_{0,p} = \int_{-k'}^{k'} \Psi(\lambda ) \left( -\i w(\lambda) \right)^p \bar{f}(\lambda) d\lambda
\end{align*}
with the truncation error for a $P$-term truncation
\begin{equation}
\left| \sum_{|p| \ge P} J_p(k \rho')e^{\i p \theta'} I_{0,p} \right| \le c_0(P) \left(\frac{\rho'}{\rho}\right)^P \text{ for } P \ge m_0(P).
\end{equation}
For $I_+$ and $I_-$, by choosing the $c_+(P)$ and $m_+(\rho)$ provided by \cref{lemma-xy}, and $c_-(P)=c_+(P)$ and $m_-(\rho)=m_+(\rho)$ due to the symmetry, the integral Bessel-type expansion \cref{eq-Bessel-type-expansion} holds $I_{\pm} = \sum_{p=-\infty}^{\infty} J_p(k \rho') e^{\i p \theta'} I_{\pm,p}$,
where
\begin{align*}
I_{+,p} = \int_{k'}^{\infty} \Psi(\lambda ) \left( -\i w(\lambda) \right)^p \bar{f}(\lambda) d\lambda, \quad I_{-,p} = \int_{-\infty}^{-k'} \Psi(\lambda ) \left( -\i w(\lambda) \right)^p \bar{f}(\lambda) d\lambda
\end{align*}
with the truncation error for a $P$-term truncation
\begin{equation}
\left| \sum_{|p| \ge P} J_p(k \rho')e^{\i p \theta'} I_{\pm,p} \right| \le c_{\pm}(P) \left(\frac{\rho'}{\rho}\right)^P \text{ for } P \ge m_{\pm}(P).
\end{equation}
For each $p$, the expansion functions add up to $F_p$ because
\begin{align*}
\begin{split}
F_p={}& \sum_{r=1}^{n_r}\tau_r \i\pi \Psi(\lambda_r)\left(-\i w(\lambda_r)\right)^p f_r +  \left(\int_{-\infty}^{-k'}+\int_{-k'}^{k'}+\int_{k'}^{\infty}\right) \Psi(\lambda)\left(-\i w(\lambda)\right)^p\bar{f}(\lambda)d\lambda \\
={}&\sum_{r=1}^{n_r}I_{r,p} + I_{-,p} + I_{0,p} + I_{+,p}.
\end{split}
\end{align*}
Hence by adding the series expansions up, for any (finite) $ P \ge m(\rho):= \max_j m_j(\rho)$,
\begin{equation*}
\left|I - \sum_{|p|<P}J_p(k \rho')e^{\i p \theta'} F_p\right| \le \sum_{j} \left|I_j-\sum_{|p|< P}J_p(k \rho')e^{\i p \theta'} I_{j,p}\right| \le \sum_{j} c_j(P) \left( \frac{\rho'}{\rho} \right)^P.
\end{equation*}
Therefore the Bessel-type expansion holds for any $P \ge m(\rho)$, and the truncation error is bounded by $\sum_j c_j(P)(\rho'/\rho)^P$.
Since the only dependence of $c_j(P)$ on $\rho$ appears in the terms $c_{\pm}(P)$ which reach upper bounds as $\rho\to \rho_m$, and each $m_j(\cdot)$ is an increasing function, we conclude that by choosing $c(P) = \left. \sum_j c_j(P)\right|_{\rho = \rho_m}$, the truncation error estimate \cref{eq-thm-truncation-result} for any $P \ge m(\rho_M)$.
\end{proof}

\subsection{Proof of Theorem \ref{thm-exp-conv}}

Here, only the proof of the ME \cref{eq-err-ME} will be given as the others
can be similarly treated.

Let $\tilde{x}=x-x_{c}$, $\tilde{y}=\tau^{\ast}(y-d_{t}^{\ast})+\tau^{\star
}(y_{c}-d_{s}^{\star})$, $\tilde{x}^{\prime}=x^{\prime}-x_{c}$, $\tilde
{y}^{\prime}=\tau^{\star}(y^{\prime}-y_{c})$, and
\begin{align}
f(\lambda)=e^{(\sqrt{\lambda^{2}-k_{s}^{2}}-\sqrt{\lambda^{2}-k_{t}^{2}}%
)\tau^{\ast}(y-d_{t}^{\ast})}\sigma_{ts}^{\ast\star}(\lambda)
\end{align}
so that the integral \cref{eq-urf-decomp-int} can be written as
\[
\begin{aligned} u_{ts}^{\ast\star}\left( \mathbf{x}, \mathbf{x}'; \sigma_{ts}^{\ast \star} \right)&= \int_{-\infty}^{\infty}e^{-\sqrt{\lambda^2-k_t^2}\tau^{\ast}(y-d_t^{\ast})-\sqrt{\lambda^2-k_s^2}\tau^{\star}(y'-d_s^{\star})+\i\lambda(x-x')}\sigma_{ts}^{\ast \star}(\lambda)d\lambda \\ &= \int_{-\infty}^{\infty} e^{-\sqrt{\lambda^{2}-k_s^{2}}(\tilde{y}+\tilde{y}')+\i \lambda(\tilde{x}-\tilde{x}')} f(\lambda)d\lambda. \end{aligned}
\]
With the assumption that the sources, the targets and the centers are bounded
in a given box, and that $|y_{c}-d_{s}^{\star}|$ has a nonzero lower bound,
there exists fixed $T>0$ such that $|\tilde{x}|<T\tilde{y}$. By
\cref{theorem-poly-bound}, $\sigma_{ts}^{\ast\star}(\lambda)$ has a polynomial
bound in the region $\Omega_{T}=\left\{  a+b\i:a>0,-aT<b<aT\right\}  $ when
$\Re\lambda$ is sufficiently large, which easily implies the same for
$f(\lambda)$. With the decomposition \cref{eq-f-decomp-imag-poles}, when
neighborhoods of each branch point $k_{l}$ with a sufficiently small radius
$\epsilon_{0}>0$ are excluded from $\Omega_{T}$, $\bar{\bar{f}}(\lambda)$ is
finite and hence has polynomial bound. Replacing $x,y,x^{\prime},y^{\prime},k
$ in \cref{thm-Bessel-type-expansion} by $\tilde{x},\tilde{y},\tilde
{x}^{\prime},\tilde{y}^{\prime},k_{s}$ finishes the proof of \cref{eq-err-ME}.

For the LE \cref{eq-err-LE}, similarly, choose $\tilde{x} = x_{c}^{l} -
x^{\prime}$, $\tilde{y} = \tau^{\ast}(y_{c}^{l} - d_{t}^{\ast})+\tau^{\star
}(y^{\prime}-d_{s}^{\star})$, $\tilde{x}^{\prime}=x_{c}^{l}-x$, $\tilde
{y}^{\prime}= \tau^{\ast}(y-y_{c}^{l})$, $k=k_{t}$ and $f(\lambda) =
e^{(\sqrt{\lambda^{2}-k_{t}^{2}}-\sqrt{\lambda^{2}-k_{s}^{2}})\tau^{\star
}(y^{\prime}-d_{s}^{\star})}\sigma_{ts}^{\ast\star}(\lambda)$.

For the M2L \cref{eq-err-M2L}, for each LE coefficient $L_{m}^{\ast\star
}(\mathbf{x}_{c}^{l},\mathbf{x})$,
choose $\tilde{x}=x_{c}^{l}-x_{c}$, $\tilde{y}=\tau^{\ast}(y_{c}^{l}%
-d_{t}^{\ast})+\tau^{\star}(y_{c}-d_{s}^{\star})$, $\tilde{x}^{\prime
}=x^{\prime}-x_{c}$, $\tilde{y}^{\prime}=\tau^{\star}(y^{\prime}-y_{c})$,
$k=k_{s}$ and
\[
f(\lambda) = e^{(\sqrt{\lambda^{2}-k_{s}^{2}}-\sqrt{\lambda^{2}-k_{t}^{2}%
})\tau^{\ast}(y_{c}^{l}-d_{t}^{\ast})}\sigma_{ts}^{\ast\star}(\lambda) \left(
\i w_{t}(\lambda)^{-1}\right) ^{m}.
\]

For the L2L \cref{eq-err-L2L}, for each LE coefficient $L_{m}^{\ast\star
}(\mathbf{x}_{c}^{l},\mathbf{x})$, choose $\tilde{x}=x_{c}^{l}-x^{\prime}$,
$\tilde{y}=\tau^{\ast}(y_{c}^{l}-d_{t}^{\ast})+\tau^{\star}(y^{\prime}%
-d_{s}^{\star})$, $\tilde{x}^{\prime}=x_{c}^{l}-\tilde{x}_{c}^{l}$, $\tilde
{y}^{\prime}=\tau^{\star}(\tilde{y}_{c}^{l}-y_{c}^{l})$, $k=k_{t}$ and
\[
f(\lambda) = e^{(\sqrt{\lambda^{2}-k_{t}^{2}}-\sqrt{\lambda^{2}-k_{s}^{2}%
})\tau^{\star}(y^{\prime}-d_{s}^{\star})}\sigma_{ts}^{\ast\star}(\lambda)
\left( \i w_{t}(\lambda)^{-1}\right) ^{m}.
\]

\section{Conclusion}

\label{sect-4} Far-field expansions of ME, LE as well as M2L and L2L
translation operators are derived and the exponential convergence rates are
proven. The analysis shows the convergence of ME and LE for the reaction field
components depends on a polarized distance between the target and the
polarized image of the sources. This fact shows how the ME and LE can be used
in the traditional FMM framework, which has been implemented in the 3-D case
in \cite{bo2019sphfmm}.

In a future work, we will extend the convergence result to 3-D Helmholtz
equations in layered media.

\appendix


\section{Proof of Lemma \ref{lemma-bijection}}

\label{proof-lemma-bijection} We begin with the following two lemmas, which
are stated given the same conditions as in \Cref{lemma-bijection}.
\begin{lemma}\label{lemma-2quadI}
Let $a,b\in\mathbb{R}$ such that $z=a+b\i\in D^{-}$, then $\Re \phi(z)>0$, $\Im\phi(z)>0$.
\end{lemma}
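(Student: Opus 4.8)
The plan is to treat the real and imaginary parts of $\phi(z)$ separately, the real part being immediate and the imaginary part carrying all the work. First I would make $D^-$ explicit. By \cref{def-gamma} the point of $\gamma^-$ with imaginary part $b<0$ is $w\cos\beta-\i\sqrt{w^2-k^2}\sin\beta$ with $\sqrt{w^2-k^2}=-b/\sin\beta$, i.e.\ it has real part $\cos\beta\sqrt{b^2/\sin^2\beta+k^2}$; hence by \cref{def-Dpm}
\begin{equation*}
D^-=\Big\{\,a+b\i:\ b<0,\ a>\cos\beta\sqrt{\tfrac{b^2}{\sin^2\beta}+k^2}\,\Big\},
\end{equation*}
and since both sides of the last inequality are positive it is equivalent to $a^2/\cos^2\beta-b^2/\sin^2\beta>k^2$. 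Recall also $\beta\in(0,\tfrac{\pi}{2})$, so $\cos\beta>0$ and $\sin\beta>0$.

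Now fix $z=a+b\i\in D^-$ and write $\sqrt{z^2-k^2}=c+d\i$. Since $\Im(z^2-k^2)=2ab<0$, the argument of $z^2-k^2$ lies in $(-\pi,0)$, so by the branch convention \cref{eq-branch-cut} we have $\arg\sqrt{z^2-k^2}\in(-\tfrac{\pi}{2},0)$, hence $c>0$ and $d<0$. Expanding, $\phi(z)=(a\cos\beta-d\sin\beta)+(b\cos\beta+c\sin\beta)\i$, so $\Re\phi(z)=a\cos\beta-d\sin\beta>0$ as a sum of two strictly positive terms. This disposes of the real part.

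The remaining, and main, step is $\Im\phi(z)=b\cos\beta+c\sin\beta>0$; because $b<0$ the two terms compete, so this requires genuine work. Since $c\sin\beta>0$ and $-b\cos\beta=|b|\cos\beta>0$, the claim is equivalent to $c^2\sin^2\beta>b^2\cos^2\beta$. I would then substitute $c^2=\tfrac12\big(|z^2-k^2|+\Re(z^2-k^2)\big)$ and use the identity $|z^2-k^2|^2=(a^2+b^2-k^2)^2+4k^2b^2$ (equivalently $|z^2-k^2|=|z-k|\,|z+k|$): writing $R=a^2+b^2-k^2$ and noting $\Re(z^2-k^2)=R-2b^2$, the inequality becomes $\sin^2\beta\sqrt{R^2+4k^2b^2}>2b^2-\sin^2\beta\,R$. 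If the right-hand side is $\le 0$ the inequality is automatic (the left-hand side being strictly positive as $b\neq 0$, $k>0$); otherwise, squaring, cancelling the common $\sin^4\beta\,R^2$, and dividing by $4b^2>0$, it collapses to $\sin^4\beta\,k^2>b^2-\sin^2\beta\,R$, which a short rearrangement ($p:=\sin^2\beta$, $q:=\cos^2\beta$, $p+q=1$) shows is exactly $a^2/\cos^2\beta>b^2/\sin^2\beta+k^2$, the defining inequality of $D^-$ recorded above. Hence $\Im\phi(z)>0$, completing the proof. The only real obstacle is this last chain of two squarings: the sign competition in $\Im\phi(z)$ makes it necessary, and the bookkeeping has to be arranged so that the identity for $|z^2-k^2|$ causes the expression to land precisely on the hyperbola inequality characterizing $D^-$; everything else is routine.
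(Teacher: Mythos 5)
Your proof is correct and follows essentially the same route as the paper's: split into real and imaginary parts, get $\Re\phi(z)>0$ from the sign of $\sqrt{z^2-k^2}$, and reduce $\Im\phi(z)>0$ by squaring to the hyperbola inequality $a^2/\cos^2\beta>b^2/\sin^2\beta+k^2$ defining $D^-$. The only difference is bookkeeping: the paper packages the computation as $Q_1\sin^4\beta-Q_2^2>0$ with a clean factorization, while you do an explicit case split and two squarings, but the underlying identity is the same.
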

\begin{proof}
Let $u,v\in\mathbb{R}$ such that $u+v\i=\sqrt{z^{2}-k^{2}}$, then $uv=ab<0$.
With the convention of the branch cut \cref{eq-branch-cut}, we have $u>0$, so $v<0$.
Recall that $\beta \in (0,\frac{\pi}{2})$, we have $u\sin\beta-b\cos\beta > 0$ and $\Re \phi(z) = a \cos \beta - v \sin\beta >0$.
For $\Im \phi(z)$, let
\begin{equation}
Q_{1}=(a^{2}-b^{2}-k^{2})^{2}+4a^{2}b^{2}, \quad Q_{2}=(a^{2}-b^{2}-k^{2})\sin^{2}\beta-2b^{2}\cos^{2}\beta.
\end{equation}
By simple calculation, we have $2u^{2}\sin^{2}\beta-2b^{2}\cos^{2}\beta=\sqrt{Q_{1}}\sin^{2}\beta+Q_{2}$, and
\begin{equation*}
Q_{1}\sin^{4}\beta-Q_{2}^{2}=b^{2}\sin^{2}(2\beta)\left(\frac{a^{2}}{\cos^{2}\beta}-\frac{b^{2}}{\sin^{2}\beta}-k^{2}\right)>0,
\end{equation*}
so $\sqrt{Q_{1}}\sin^{2}\beta=\left\vert \sqrt{Q_{1}}\sin^{2}\beta\right\vert>|Q_{2}|$,
which implies
\begin{equation*}
\Im \phi(z) = b\cos\beta + u \sin\beta = \frac{\sqrt{Q_{1}}\sin^{2}\beta+Q_{2}}{2(u\sin\beta-b\cos\beta)}>0.
\end{equation*}
\end{proof}
\begin{lemma}\label{lemma-noGamma}
If $w\in\gamma^+$, then $\phi(z)\ne w$ for any $ z\in D^-$.
\end{lemma}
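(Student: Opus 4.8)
The plan is to argue by contradiction. Suppose $\phi(z)=w$ for some $z\in D^-$ and some $w\in\gamma^+$, and write $w=\phi(r)$ with $r\in(k,+\infty)$. I would first convert the equation $\phi(\zeta)=w$ into a polynomial identity: isolating the radical in $\zeta\cos\beta+\i\sin\beta\sqrt{\zeta^2-k^2}=w$ and squaring yields
\begin{equation*}
\zeta^2-2w\cos\beta\,\zeta+\bigl(w^2-k^2\sin^2\beta\bigr)=0 .
\end{equation*}
Both $z$ and $r$ solve $\phi(\cdot)=w$, so both are roots of this monic quadratic. Note that only the harmless direction of the squaring step is used here (a genuine solution of $\phi(\zeta)=w$ is a root of the quadratic), so no spurious roots enter the argument, and this reasoning does not rely on \cref{lemma-bijection}.

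Next I would show $z\neq r$, so that $z$ and $r$ are precisely the two roots. By the definition $D^-=\{z+t:z\in\gamma^-,\ t\in\mathbb{R}^+\}$ together with $\gamma^-=\phi^{-1}\bigl((k,+\infty)\bigr)=\{\,\omega\cos\beta-\i\sqrt{\omega^2-k^2}\sin\beta:\omega>k\,\}$, every point of $\gamma^-$ has imaginary part $-\sqrt{\omega^2-k^2}\sin\beta<0$ for some $\omega>k$, and adding $t\in\mathbb{R}^+$ leaves the imaginary part unchanged; hence $\Im z<0$, whereas $r\in\mathbb{R}$. Therefore $z$ and $r$ are the two roots of the quadratic, and Vieta's formula gives $z+r=2w\cos\beta$, so that $\Im z=2\cos\beta\,\Im w$ since $r$ is real. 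On the other hand $w=\phi(r)=r\cos\beta+\i\sqrt{r^2-k^2}\sin\beta$ with $r>k$ and $\beta\in(0,\tfrac{\pi}{2})$, whence $\Im w=\sqrt{r^2-k^2}\sin\beta>0$ and consequently $\Im z=\sin(2\beta)\sqrt{r^2-k^2}>0$. This contradicts $\Im z<0$, which completes the proof.

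I do not expect a serious obstacle: once one observes that both preimages of $w$ satisfy the same quadratic, the result falls out of Vieta's formula and a single sign check. The one point that deserves care is the argument that $z$ and $r$ are genuinely distinct — this is exactly where the location of $D^-$ strictly in the open fourth quadrant (inherited from $\gamma^-$, and compatible with \cref{lemma-2quadI}) is used — and keeping track of the orientation so that the final contradiction is with $\Im z<0$ rather than with some branch-cut subtlety.
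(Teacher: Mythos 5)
Your proposal is correct and follows essentially the same route as the paper: both arguments observe that $z$ and the real preimage $r=x_0$ of $w$ satisfy the quadratic $\lambda^2-2\lambda w\cos\beta+w^2=k^2\sin^2\beta$, identify $z$ as the second root via Vieta, and reach a contradiction because that root has positive imaginary part while every point of $D^-$ has negative imaginary part. Your explicit verification that $z\ne r$ (so the two are genuinely the two roots) is a small point the paper asserts without comment, but the substance is identical.
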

\begin{proof}
Suppose for contradiction that $z\in D^{-}$, $\phi(z)=w$. Since $w\in\gamma^+$, there exists a positive real number $x_0\ge k$ such that $w=x_0\cos\beta+\i\sqrt{x_0^{2}-k^{2}}\sin\beta$.
Therefore, $x_0$ and $z$ are distinct roots of the quadratic equation
\begin{align*}
\lambda^{2}-2\lambda w\cos\beta+w^{2}=k^{2}\sin^{2}\beta
\end{align*}
of $\lambda$.
Hence $z=2w\cos\beta-x_0=x_0\cos(2\beta)+\i\sqrt{x_0^{2}-k^{2}}\sin(2\beta)$ has nonnegative imaginary part, which contradicts the assumption that $z\in D^{-}$.
\end{proof}
\begin{proof}[Proof of \cref{lemma-bijection}]
Define $\phi' : D^+ \to \mathbb{C}$ by
$
\phi'(w)=w\cos\beta-\i\sqrt{w^2-k^2}\sin\beta.
$ 
It suffices to show $\phi'$ is the inverse of $\phi$ on ${D^+}$, i.e. $\phi^{-1}|_{D^+} = \phi'$.
First, we will show that $\phi(D^{-})\subset D^{+}$.
By \cref{lemma-2quadI} and \cref{lemma-noGamma}, $\phi(D^{-})$ is a subset of the first quadrant, and it has no intersection with the hyperbola $\Gamma$.
If $w=\phi(z)$ for some $z\in D^{-}$ and $w\notin D^{+}$, when we move $z$ horizontally to the left, eventually $z$ touches $\Gamma$ and $\phi(z)$ approaches the positive real axis, so the trajectory of $\phi(z)$, which must be continuous because $\phi$ is holomorphic, crosses $\Gamma$ in the first quadrant, but it contradicts with \cref{lemma-noGamma} since the intersection must has its inverse in $D^{-}$.
Similarly (by taking complex conjugates), $\phi^{\prime}(D^{+})\subset D^{-}$.
Second, we will show that $\phi$ is bijective on $D^{-}$ with inverse $\phi^{\prime}$.
Let $a,b\in\mathbb{R}^{+}$ such that $z=a+b\i\in D^{-}$, then $w=\phi(z)\in D^{+}$ is one of the roots of the quadratic equation of $\lambda$
\begin{equation}
\lambda^{2}-2\lambda z\cos\beta+z^{2}=k^{2}\sin^{2}\beta. \label{wzeqn}%
\end{equation}
Let $u,v\in\mathbb{R}$ such that $\sqrt{z^{2}-k^{2}}=u+v\i$, then $u>0$, the pair of roots are given by
\begin{equation}
\lambda_{\pm} =(a\cos\beta\mp v\sin\beta)+\i(b\cos\beta\pm u\sin\beta).
\end{equation}
By \cref{lemma-2quadI}, $\Im w = \Im\phi(z) >0$, so $w = \lambda^+$.
Conversely, $z$ is the only root of the quadratic equation
\begin{equation*}
\lambda^{2}-2\lambda w\cos\beta+w^{2}=k^{2}\sin^{2}\beta
\end{equation*}
in $D^{-}$ provided $\phi(z)=w$ by the similar reason, so $\phi$ is injective and $z=\phi^{\prime}(w)$.
Repeat this step for any $w^{\prime}\in D^{+}$ and let $z^{\prime}=\phi^{\prime}(w^{\prime})$, we have $\phi$ is surjective and $w^{\prime}=\phi(\phi^{\prime}(w^{\prime}))$.
\end{proof}


\section{Properties of the Green's function in layered media}

\label{Appendix-sigma}
As the preliminaries of the proofs of the convergence estimates, some
properties of the Green's function in layered media are discussed, including
the algebraic structure of the reflection/transmission coefficients
$\sigma_{ts}^{\ast\star}(\lambda)$, and its polynomial bound.

\subsection{The algebraic structure of the reflection/transmission
coefficient}

\label{section-decomp-rf} In paper \cite{bo2018taylorfmm} we conclude that the
reaction field $u^{\mathrm{r}}$ has a decomposition \cref{eq-urf-decomp}. To
solve the coefficients $\sigma_{ts}^{\ast\star}(\lambda)$, the interface
conditions deserve some further observation.

Each interface equation at $y=d_{l}$ given by \cref{eq-interface-cond-form} is
equivalent to
\begin{equation}
\left[  a_{t}\hat{u}^{\text{r}}+b_{t}\frac{\partial\hat{u}^{\text{r}}%
}{\partial y}\right]  =-\left[  \delta_{t,s}\left(  a_{t}\hat{G}^{\text{f}%
}-b_{t}\frac{\partial\hat{G}^{\text{f}}}{\partial y^{\prime}}\right)  \right]
\text{, at }y=d_{l}\label{eq-interface-cond-freq-domain}%
\end{equation}
in the frequency domain. With the conventions $d_{-1}=+\infty$ and
$d_{L}=-\infty$, and the decomposition of $u^{\mathrm{r}}$
\cref{eq-urf-decomp} introduced, by a well separation of variables $y$ and
$y^{\prime}$, the above equation can be further expanded as linear equations
of $\sigma_{ls}^{\ast\star}(\lambda)$ and $\sigma_{l+1,s}^{\ast\star}%
(\lambda)$:
\begin{align}
\label{eq-interface-cond-expanded}-c_{l}^{-}\sigma_{ls}^{\uparrow\star} -
c_{l}^{+} e_{l}\sigma_{ls}^{\downarrow\star} + c_{l+1}^{-}e_{l+1}%
\sigma_{l+1,s}^{\uparrow\star} + c_{l+1}^{+}\sigma_{l+1,s}^{\downarrow\star} =
v_{l,s}^{\star},\quad\star\in\{\uparrow,\downarrow\}
\end{align}
here $v_{l,s}^{\uparrow} = \delta_{l,s}c_{l}^{+}/(4\pi h_{l})$, $v_{l,s}%
^{\downarrow} = -\delta_{l+1,s}c_{l+1}^{-}/(4\pi h_{l+1})$, and the
coefficients
\begin{equation}
h_{t}=\sqrt{\lambda^{2}-k_{t}^{2}}\text{, }c_{t}^{\pm}=a_{t}\pm b_{t}%
h_{t}\text{, }e_{t}=e^{-h_{t}(d_{t-1}-d_{t})}\text{, }
t=l,l+1\label{def-field-gen-coeff}%
\end{equation}
Each $e_{t}$ vanishes in \cref{eq-interface-cond-expanded} if and only if
$t=0$ or $t=L$, corresponding to a prohibited propagating direction of $\ast$,
where $\sigma_{ts}^{\ast\star}(\lambda)=0$ in such case, and the term can be
safely neglected from the equations.

If we expand all the $2L$ interface conditions into the form
\cref{eq-interface-cond-expanded}, two linear system of unknowns
$\boldsymbol{\sigma}_{s}^{\uparrow}$ which consists of components $\sigma
_{ts}^{\ast,\uparrow}$ and $\boldsymbol{\sigma}_{s}^{\downarrow}$ which
consists of components $\sigma_{ts}^{\ast,\downarrow}$ are then derived in the
form
\begin{equation}
\label{eq-LS-sigma}\mathbf{A}(\lambda)\boldsymbol{\sigma}_{s}^{\uparrow
}(\lambda)=\mathbf{b}_{s}^{\uparrow}(\lambda)\text{, }\mathbf{A}%
(\lambda)\boldsymbol{\sigma}_{s}^{\downarrow}(\lambda)=\mathbf{b}%
_{s}^{\downarrow}(\lambda),
\end{equation}
here $\mathbf{A}$ does not depend on the source layer $s$ or the
source-induced direction $\star$. The functions $\sigma_{ts}^{\ast\star
}(\lambda)$ can be solved from linear systems \cref{eq-LS-sigma} using
Cramer's rule, so the complex roots of $\det\mathbf{A}(\lambda)$ are the
common poles of each $\sigma_{ts}^{\ast\star}(\lambda)$.

Now consider the field $\mathbb{F}$ of some functions of $\lambda$ defined by
field extension from $\mathbb{C}$
\begin{equation}
\label{def-field-F}\begin{aligned} \mathbb{F}&=\mathbb{C}\left(h_t, c_t^{\pm}, e_m; 0 \le t \le L, 1 \le m \le L-1 \right)\\ &=\mathbb{C}\left(\sqrt{\lambda^2-k_t^2}, e^{-\sqrt{\lambda^2-k_m^2}(d_{m-1}-d_m)}; 0 \le t \le L,1\le m\le L-1 \right) \end{aligned}
\end{equation}
here $h_{t}$, $c_{t}^{\pm}$, etc. are defined in \cref{def-field-gen-coeff}.
Since any coefficient of the linear systems \cref{eq-LS-sigma} is in
$\mathbb{F}$ as shown in \cref{eq-interface-cond-expanded}, it follows that
each
\begin{equation}
\sigma_{ts}^{\ast\star}(\lambda) \in\mathbb{F}.
\end{equation}

\subsection{Polynomial bound of the reflection/transmission coefficients}

An alternative point of view on the linear systems \cref{eq-LS-sigma} will
reveal the polynomial bound of the functions $\sigma_{ts}^{\ast\star}%
(\lambda)$ in a certain domain in the complex plane. This estimate will be
crucial to the error estimates on the far-field expansions.

Pick any $k_{M} \ge\max_{0 \le l \le L}k_{l}$. Pick any $T>0$. Define the open
set
\begin{equation}
\label{set-Omega-T}\Omega_{T} = \left\{  a+b\i: a>0, -aT<b<aT \right\}
\setminus(0,k_{M}]
\end{equation}
in the complex plane. Since any branch cut of $\sqrt{\lambda^{2}-k_{l}^{2}}$
is excluded from $\Omega_{T}$, $\sigma_{ts}^{\ast\star}(\lambda)$ is a
meromorphic function in $\Omega_{T}$. We claim there is a polynomial bound of
$\sigma_{ts}^{\ast\star}(\lambda)$ for $\lambda\in\Omega_{T}$ with a
sufficiently large real part. \begin{theorem}\label{theorem-poly-bound}
Suppose the function $\sigma(\lambda) \in \mathbb{F}$.
Suppose $\forall \epsilon >0$, $\sigma(\lambda) \ll \exp(\epsilon \lambda)$ as $\lambda \to +\infty$.
Then, $\exists k'_M>0$, $C>0$ and nonnegative integer $K$ such that
$|\sigma(\lambda)| \le C|\lambda|^K$
when $\lambda \in \Omega_T$ and $\Re \lambda > k'_M$.
In addition, $\sigma(\lambda)$ has finitely many poles in $\Omega_T$.
\end{theorem}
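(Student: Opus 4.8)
\emph{Proof plan.} Since $c_t^{\pm}=a_t\pm b_t h_t$ lies already in $\mathbb{C}[h_t]$, the field of \cref{def-field-F} is $\mathbb{F}=\mathbb{C}(h_0,\dots,h_L,e_1,\dots,e_{L-1})$, so I would begin by writing $\sigma=P/Q$ with $P,Q$ honest \emph{polynomials} in the generators $h_0,\dots,h_L,e_1,\dots,e_{L-1}$ over $\mathbb{C}$ (any inverse powers $e_m^{-1}$ that occur are cleared by multiplying numerator and denominator by a common power of $\prod_m e_m$); we may assume $P\not\equiv0$, since otherwise $\sigma\equiv0$ and there is nothing to prove. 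The plan is then to establish, uniformly for $\lambda$ in the sector $\Sigma=\{\,|\arg\lambda|\le\arctan T\,\}$ as $|\lambda|\to\infty$, sharp asymptotics $P(\lambda)\sim c_P\lambda^{N_P}e^{\beta_P\lambda}$ and $Q(\lambda)\sim c_Q\lambda^{N_Q}e^{\beta_Q\lambda}$ with $c_P,c_Q\in\mathbb{C}\setminus\{0\}$, $N_P,N_Q\in\mathbb{Z}$ and $\beta_P,\beta_Q\le0$, to divide them, and finally to use the hypothesis $\sigma\ll e^{\epsilon\lambda}$ (for every $\epsilon>0$) to fix the sign of the exponential rate of the quotient.

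\emph{Asymptotics of $P$ and $Q$.} Away from the branch cuts of $\sqrt{\lambda^2-k_l^2}$ (the imaginary axis together with $[-k_l,k_l]$) — in particular throughout $\Omega_T\cap\{\Re\lambda>k'_M\}$ once $k'_M>k_M$ — one has, uniformly on $\Sigma$ for $|\lambda|$ large, $h_t(\lambda)=\lambda\sqrt{1-k_t^2/\lambda^2}=\lambda-k_t^2/(2\lambda)+O(|\lambda|^{-3})$, hence $\Re h_t(\lambda)\ge\tfrac12\Re\lambda$ and $e_m(\lambda)=e^{-\delta_m h_m(\lambda)}=e^{-\delta_m\lambda}u_m(\lambda)$, where $\delta_m:=d_{m-1}-d_m>0$ and $u_m(\lambda)=e^{-\delta_m(h_m(\lambda)-\lambda)}$ is analytic with $u_m(\lambda)\to1$ (so $|e_m(\lambda)|\le e^{-\delta_m\Re\lambda/2}$). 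Thus each monomial $\prod_t h_t^{a_t}\prod_m e_m^{b_m}$ of $P$ equals $e^{\beta\lambda}g(\lambda)$ with $\beta=-\sum_m b_m\delta_m\le0$ and $g(\lambda)=\lambda^{\sum_t a_t}\prod_t(h_t/\lambda)^{a_t}\prod_m u_m(\lambda)^{b_m}$ possessing a convergent Laurent expansion at $\infty$ with leading term $\lambda^{\sum_t a_t}$. Grouping the finitely many monomials of $P$ by the value of $\beta$ (which runs over a finite subset of $(-\infty,0]$, since the $b_m$ are bounded) I would write $P(\lambda)=\sum_{\beta}e^{\beta\lambda}g_\beta(\lambda)$, each $g_\beta$ a convergent Laurent series in $1/\lambda$; let $\beta_P$ be the largest $\beta$ with $g_\beta\not\equiv0$ and $c_P\lambda^{N_P}$ the leading term of $g_{\beta_P}$, so $c_P\ne0$. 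Because on $\Sigma$ one has $|e^{(\beta-\beta_P)\lambda}|=e^{(\beta-\beta_P)\Re\lambda}\to0$ exponentially for every $\beta<\beta_P$ whereas each $g_\beta$ grows only polynomially, the lower-rate terms are negligible and $P(\lambda)\sim c_P\lambda^{N_P}e^{\beta_P\lambda}$ uniformly on $\Omega_T\cap\{\Re\lambda>k'_M\}$. The same reasoning gives $Q(\lambda)\sim c_Q\lambda^{N_Q}e^{\beta_Q\lambda}$ with $c_Q\ne0$; in particular $Q$ does not vanish on $\Omega_T\cap\{\Re\lambda>k'_M\}$ for $k'_M$ large.

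\emph{The bound and the poles.} Dividing, $\sigma(\lambda)=P(\lambda)/Q(\lambda)\sim(c_P/c_Q)\lambda^{N_P-N_Q}e^{(\beta_P-\beta_Q)\lambda}$ uniformly on $\Omega_T\cap\{\Re\lambda>k'_M\}$; letting $\lambda\to+\infty$ along $\mathbb{R}$ and invoking $\sigma\ll e^{\epsilon\lambda}$ for all $\epsilon>0$ forces $\beta_P-\beta_Q\le0$. Then, for $\Re\lambda>k'_M$ (enlarged so that $|\sigma(\lambda)|\le2|c_P/c_Q|\,|\lambda|^{N_P-N_Q}$ there), the factor $e^{(\beta_P-\beta_Q)\lambda}$ has modulus $\le1$ because $\Re\lambda>0$, giving $|\sigma(\lambda)|\le C|\lambda|^{K}$ with $K=\max\{N_P-N_Q,0\}$ and $C=2|c_P/c_Q|$, which is the claimed polynomial bound. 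For the last assertion, the poles of $\sigma=P/Q$ in $\Omega_T$ are zeros of $Q$, of which there are none with $\Re\lambda>k'_M$ by the above; since $Q$ is analytic and not identically zero on the connected set $\{\Re\lambda>0\}\setminus(0,k_M]$, its zeros are isolated there, and a putative accumulation of zeros in the bounded remaining region $\Omega_T\cap\{0<\Re\lambda\le k'_M\}$ could only occur on the slit $(0,k_M]$ or at the origin, where $Q$ has, from the relevant side, an analytic or a Puiseux-type expansion with isolated zeros and a real-analytic, not identically vanishing, boundary value on each subinterval $(k_j,k_{j+1})$; this excludes accumulation, so $\sigma$ has finitely many poles in $\Omega_T$.

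\emph{Main obstacle.} The crux is the asymptotics step, namely verifying that grouping by exponential rate really isolates a \emph{nonvanishing} leading coefficient $c_P$ (and $c_Q$): one must handle the case in which the layer thicknesses $d_{m-1}-d_m$ are rationally dependent, so that distinct monomials $\prod_m e_m^{b_m}$ carry the same exponential rate $e^{\beta\lambda}$ and could a priori conspire to kill every order of the coefficient — which does not happen because those monomials differ only by algebraic factors $\prod_m u_m^{b_m}$ that recombine into a single convergent Laurent series with a well-defined (or identically zero) leading term. The finiteness-of-poles argument is technical but routine.
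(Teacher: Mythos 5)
Your proposal is correct and follows essentially the same route as the paper: both write $\sigma=P/Q$ as a ratio of polynomials in the generators, expand each as a finite sum of terms of the form $e^{\beta\lambda}\cdot(\text{Laurent series at }\infty)$, isolate the dominant exponential rate and its leading Laurent coefficient, and use the sub-exponential growth hypothesis along $\mathbb{R}^+$ to force the rates of numerator and denominator to cancel, leaving a power bound. The only differences are cosmetic — you keep the decaying generators $e_m$ with rates $\beta\le 0$ where the paper clears denominators to work with growing exponentials and rates $A_q\ge 0$ — and you are in fact somewhat more explicit than the paper on the two delicate points you flag, namely possible cancellation when rationally dependent thicknesses produce equal exponential rates, and ruling out accumulation of poles at the slit $(0,k_M]$.
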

\begin{proof}
Since $\sigma(\lambda) \in \mathbb{F}$, there exist polynomials $P_1$ and $P_2$ such that
\begin{equation}
\sigma(\lambda) = \frac{I_{1}}{I_{2}} = \frac{P_{1}\left(\sqrt{\lambda^{2}-k_{l}^{2}},\cdots, e^{+\sqrt{\lambda^{2}-k_{m}^{2}}(d_{m-1}-d_m)},\cdots\right)}{P_{2}\left(\sqrt{\lambda^{2}-k_{l}^{2}},\cdots, e^{+\sqrt{\lambda^{2}-k_{m}^{2}}(d_{m-1}-d_m)},\cdots\right)}
\end{equation}
here $P_{1}$ and $P_{2}$ are polynomials of the terms in the parentheses, including terms with indices $0\le l \le L$ and $1\le m \le L-1$.
To show the asymptotic behavior of $I_1$ and $I_2$, we characterize them as elements of the ring $\mathcal{S}$ to be defined below.
Let $\Omega_{T,M} = \left\{a+b\i \in \Omega_T: a,b\in\mathbb{R}, a>k_M \right\}$ be an open subset of $\Omega_{T}$.
Let $\mathcal{G}$ be the collection of all holomorphic functions $g(\lambda)$ in $\Omega_{T,M}$ such that the number of nonzero terms with positive exponent is finite in the Laurent series of $g(\lambda)$ at $\infty$, i.e.
\begin{equation*}
\mathcal{G}=\left\{g(\lambda)=\sum_{n=0}^{\infty}c_{n}\lambda^{m-n}:m\in\mathbb{Z}\text{, }c_{n}\in\mathbb{C}\text{, } c_0 \ne 0\text{, } g(\lambda)\text{ is holomorphic in }\Omega_{T,M}\right\}.
\end{equation*}
It follows that each $\sqrt{\lambda^{2}-k_{l}^{2}}\in\mathcal{G}$, because it has neither a pole nor a branch point in $\Omega_{T,M}$ where $\Re \lambda > k_M \ge k_{l}$, and
\begin{equation}
\sqrt{\lambda^{2}-k_{l}^{2}}=\sum_{n=0}^{\infty}\frac{\sqrt{\pi}(-k_{l}^{2})^{n}}{2\Gamma(n+1)\Gamma(-n+\frac{3}{2})}\lambda^{1-2n}.
\end{equation}
Let $\mathcal{S}$ be the collection of all holomorphic functions $h(\lambda)$ in $\Omega_{T,M}$ in the form
\begin{equation}
\mathcal{S}=\left\{  h(\lambda)=\sum_{q=1}^{Q}e^{A_{q}\lambda}g_{q}(\lambda):Q\ge 0\text{, }A_{1}>\cdots>A_{Q}\geq0\text{, each }g_{q}\in\mathcal{G}\right\}.
\end{equation}
We claim that $\forall d>0$, $e^{\sqrt{\lambda^{2}-k_{i}^{2}}d}\in\mathcal{S}$.
To quickly show this fact, notice that it does not have either a pole or a branch point in $\Omega_{T,M}$, and that $e^{\sqrt{\lambda^{2}-k_{l}^{2}}d}=e^{\lambda d}e^{(\sqrt{\lambda^{2}-k_{l}^{2}}-\lambda)d}$.
For the second term, let $\mu=\lambda^{-1}$, then
\begin{equation}
e^{(\sqrt{\lambda^{2}-k_{l}^{2}}-\lambda)d} = \exp \left(\sum_{n=0}^{\infty}\frac{\sqrt{\pi}(-k_{l}^{2})^{n+1}d}{2\Gamma(n+2)\Gamma(-n+\frac{1}{2})}\mu^{2n+1} \right)
\end{equation}
which is regular in a neighborhood of $\mu=0$.
Therefore, the Laurent series in the $\mu$-plane at $0$ has zero principle part, which immediately implies $e^{(\sqrt{\lambda^{2}-k_{l}^{2}}-\lambda)d}\in\mathcal{G}$ and $e^{\sqrt{\lambda^{2}-k_{l}^{2}}d}\in\mathcal{S}$.
It is obvious that $\mathcal{G} \subset\mathcal{S}$, and $\mathcal{S}$ is a ring with function addition and multiplication.
For any function $h(\lambda) = \sum_{q=1}^{Q} e^{A_{q} \lambda} g_{q}(\lambda)\in\mathcal{G}$ which is not identical to $0$, if the leading term of $g_{1}(\lambda)$ is $B \lambda^{m}$, then
\begin{equation}
h(\lambda)\sim e^{A_{1} \lambda} B \lambda^{m}%
\end{equation}
as $\Re\lambda\to\infty$.
This is because in $\Omega_{T,M}$, $\Re \lambda \le |\lambda| \le \sqrt{1+T^2} \Re \lambda$, the limit as $|\lambda| \to \infty$ and the limit as $\Re \lambda \to \infty$ happen together.
As $|\lambda| \to \infty$, each $g_q(\lambda) \in \mathcal{G}$ approaches its leading term, in addition, as $\Re\lambda \to \infty$, $\left|e^{A_{1} \lambda} B \lambda^{m}\right|$ is larger than the sum of all the other $|e^{A_q \lambda}g_q(\lambda)|$ terms.
Now go back to $\sigma(\lambda)$.
By induction (on the total number of addition, subtraction and multiplication operations required to build up the polynomial), we have $I_{1}, I_{2} \in\mathcal{S}$.
Suppose the numerator and the denominator
\begin{equation}
I_{1} \sim e^{A_{1} \lambda} B \lambda^{m}\text{, }I_{2} \sim e^{A^{\prime}_{1} \lambda} B^{\prime}\lambda^{m^{\prime}}
\end{equation}
as $\Re\lambda\to\infty$.
Since $\sigma(\lambda) \ll \exp(\epsilon \lambda)$ as $\lambda \to +\infty$ for any $\epsilon >0$, we conclude that $A_{1} \le A^{\prime}_{1}$.
As a result, $|\sigma(\lambda)| \lesssim|\lambda|^{m-m^{\prime}}$ for $\lambda\in\Omega_{T,M}$ as $\Re\lambda\to\infty$, so the polynomial bound can be found for sufficiently large $\Re \lambda > k'_M$, and can be given in terms of $C|\lambda|^K$.
This immediately implies that poles of $\sigma(\lambda)$ in $\Omega_T$ can only be found for sufficiently small $\Re \lambda$, i.e. in a bounded region.
Hence the number of poles must be finite in $\Omega_T$.
\end{proof}


\end{document}